\documentclass[11pt,a4paper]{article}
\usepackage[latin1]{inputenc}
\usepackage[T1]{fontenc}
\usepackage{amsmath}
\usepackage{amsfonts}
\usepackage{amssymb}
\usepackage{amsthm}
\usepackage{color}
\usepackage{graphicx}
\usepackage{tabularx}
\usepackage{geometry}                          
\geometry{ hmargin=2.5cm, vmargin=2.5cm } 
\usepackage[square]{natbib}
\usepackage{ stmaryrd }                         
\usepackage{todonotes}                          
\reversemarginpar                               
\usepackage{hyperref}                           

\parindent=0pt 

\newtheorem{theorem}      {Theorem}[section]
\newtheorem*{theorem*}    {Theorem}
\newtheorem{proposition}  [theorem]{Proposition}

\newtheorem{lemma}        [theorem]{Lemma}

\newtheorem{remark}       [theorem]{Remark}

\newtheorem{corollary}    [theorem]{Corollary}

\newtheorem{hypotheses}   [theorem]{Assumptions}

\newcommand{\N}{\mathbb{N}}     
\newcommand{\R}{\mathbb{R}}     
\newcommand{\1}{\mathbf{1}}     
\renewcommand{\P}{\mathbb{P}}   
\newcommand{\E}{\mathbb{E}}     
\newcommand{\F}{\mathcal{F}}    
\renewcommand{\L}{\mathcal{L}}    
 
\renewcommand{\a}{\alpha}       
\renewcommand{\b}{\beta}        
\renewcommand{\l}{\lambda}      
\renewcommand{\b}{\beta}        
\newcommand{\p}{\partial}       
\renewcommand{\t}[1]{\widetilde{#1}}  
\renewcommand{\tilde}{\widetilde}
\renewcommand{\check}{\widehat}
\newcommand{\lip}{\text{Lip}}
\newcommand{\fw}[1]{\overset{\rightarrow}{#1}} 
\newcommand{\Fw}[1]{\overset{\longrightarrow}{#1}} 
\newcommand{\bw}[1]{\overset{\leftarrow}{#1}} 
\newcommand{\Bw}[1]{\overset{\longleftarrow}{#1}} 

\title{Intertwinings and Stein's magic factors for birth-death processes}
\author{Bertrand Cloez\thanks{UMR MISTEA, INRA Montpellier, France.} \and Claire Delplancke\thanks{Center of Mathematical Modeling, Chile. E-mail address: cdelplancke at cmm.uchile.cl (corresponding author).}}


\begin{document}

\maketitle

\begin{abstract} 

This article investigates second order intertwinings between semigroups of birth-death processes and discrete gradients on $\N$. It goes one step beyond a recent work of Chafa\"{i} and Joulin which establishes and applies to the analysis of birth-death semigroups a first order intertwining. Similarly to the first order relation, the second order intertwining involves birth-death and Feynman-Kac semigroups and weighted gradients on $\N$, and can be seen as a second derivative relation. As our main application, we provide new quantitative bounds on the Stein factors of discrete distributions. To illustrate the relevance of this approach, we also derive approximation results for the mixture of Poisson and geometric laws.

\paragraph{Keywords:}
Birth-death processes; Feynman-Kac semigroups; intertwinings; Stein's factors; 
Stein's method; distances between probability distributions.

\paragraph{Mathematics Subject Classification (MSC2010):} 
60E15; 60J80, 47D08, 60E05, 60F05.
\end{abstract}

\tableofcontents
\section{Introduction}

A birth-death process is a continuous-time Markov process with values in $\N=\{0,1,\dots\}$ which evolves by jumps of two types: onto the integer just above (birth) or just below (death). We denote by BDP$(\a,\b)$ the birth-death process with positive birth rate $\a=(\a(x))_{x\in \N}$ and non-negative death rate $\b=(\b(x))_{x\in \N}$ satisfying to $\b(0)=0$. Its generator is defined for every function $f:\N \rightarrow \R$ as
\begin{align*}
Lf(x)&=\a(x)(f(x+1)-f(x))+\b(x) (f(x-1)-f(x)),&x\in \N.
\end{align*}

For a generator $L$, associated to a semigroup $(P_t)_{t\geq 0}$ and a Markov process $(X_t)_{t\geq 0}$ on $\N$, and a function $V$ on $\N$ (usually called a potential), the Schr\"odinger operator $L-V$ is defined for every function $f$ as 
\begin{align*}
(L -V) f(x)&=(L f)(x)-V(x)f(x)  ,&x\in \N,
\end{align*}
and is associated to the Feynman-Kac semigroup $(P_t^V)_{t \geq 0}$ defined for all bounded or non-negative functions $f$ on $\N$ as 
\begin{align*}
(P_t^V f)(x)&=\E\left[f(X_t^x)e^{-\int_0^t{V(X_s^x) ds}}\right],&x\in \N,\quad t\geq 0.
\end{align*}

The starting point of our work is the recent article \cite{chafai2013intertwining} which establishes a first order intertwining relation involving birth-death and Feynman-Kac semigroups, and discrete gradients on $\N$. For example, it reads as
\begin{equation}
\label{eq:interw1}
\p P_t  =\tilde{P}^{\tilde{V}}_t \p ,
\end{equation}
where $\p $ is the discrete gradient defined by $\p f(x) = f(x+1)-f(x)$, the notation $(\tilde{P}^{\tilde{V}}_t)_{t\geq 0}$ standing for an alternative Feynman-Kac semigroup. Actually, the precise result holds more generally for weighted gradients and allows to derive known as well as new results on the analysis of birth-death semigroups.\\

According to this observation, the aim of the present article is to extend this work by stating a second order intertwining relation. More precisely, let us define the backward gradient $\p^*$ by
\begin{align*}
\p^*f(x)&=f(x-1)-f(x), & x \in \N^*=\{1,2,\dots\};&& \p^*f(0)&=-f(0).
\end{align*}

Under some appropriate conditions on the potential $\t{V}$, we derive a formula of the type
\begin{equation}
\label{eq:interw2}
\p^* \p P_t  =\check{P}^{\check{V}}_t \p^* \p ,
\end{equation}
where $(\check{P}^{\check{V}}_t)_{t \geq 0}$ is a new Feynman-Kac semigroup. Similarly to the first order, this second order intertwining relation, which is our main result, is given in the more general case of weighted gradients.

Once our second order relation is established, it reveals to have many interesting consequences. In particular, we derive results on the estimation of the so-called Stein factors. Stein's factors, also known as Stein's magic factors, are upper bounds on derivatives of the solution to Stein's equation and a key point in Stein's method, introduced by Stein in \cite{stein1972bound}, which consists in evaluating from above distances between probability distributions. Among the important results appearing more or less recently in this very active field of research, let us cite some references within the framework of discrete probabilities distributions. Stein's factors related to the Poisson approximation in total variation and Wasserstein distances are studied in the seminal paper \cite{chen1975poisson}, in the reference book \cite{barbour1992poisson} and in the recent article \cite{barbour2015stein} for example. For the binomial negative approximation, one can cite \cite{brown1999negative} for the total variation distance and \cite{barbour2015stein} for the Wasserstein distance; for the geometric approximation in total variation distance, see \cite{pekoz1996stein} and \cite{pekoz2013total}. An important advance is made in \cite{brown2001stein}, where a universal approach to evaluate Stein's factors for the total variation distance is developed. The work \cite{eichelsbacher2008stein} provides Stein's factors for the total variation distance when the target distribution is a Gibbs distribution.  While our approach relies on the so-called generator method, which characterizes the reference distribution as the invariant measure of some Markov process, more general Stein operators have also been developed (\citep{LeyReinertSwan}).\\

 
In the present article, we propose a universal technique to evaluate Stein's factors related to the approximation in total variation, Wasserstein and Kolmogorov distances. On the basis of some results derived in \cite{brown2001stein}, the main ingredients are the method of the generator and the intertwining relations presented above. To the authors' knowledge, the systematic use of this last ingredient, which comes from the functional analysis, seems to be new within the context of Stein's method. It allows to construct a unified framework for the derivation of Stein's factors, which applies to a wide range of discrete probability distributions-namely, distributions that are invariant with respect to some reversible birth-death process on $\N$ with good properties. A similar approach might be developed similarly for continuous distributions characterized as the invariant measure of some diffusion processes, for which a first order intertwining relation already exists (\cite{bonnefont2014intertwining,C12}); or for other discrete distributions, such as compound Poisson distributions which are invariant with respect to some downwards skip-free process.

A case-by-case examination of our general results in examples of interest reveals that our upper bounds sometimes improve on the ones already known, and sometimes are not as sharp. For example, we improve the first Stein factor related to the negative binomial approximation in total variation distance and we derive new Stein's factors for the geometric approximation in Wasserstein distance.

As an additional part of independent interest, we study the approximation of mixture of discrete distributions in the spirit of the Stein method. Combined with the Stein bounds, the obtained results have potential applications of which we give a flavour through the following example. Denote  NB$(r,p)$ the negative binomial distribution of parameters $(r,p)$. It is a mixed Poisson distribution, converging in law towards the Poisson distribution $\mathcal{P}_\l$ in the regime $p\to 1$, $r\to \infty$ and $r(1-p)/p\rightarrow \l$. The following bound in Wasserstein distance $W$ seems to be the first attempt to quantify this well-known convergence:
\begin{align*}
W\left(\text{NB}(r,p),\mathcal{P}_{\frac{r(1-p)}{p}}\right) 
&\leq \frac{8}{3\sqrt{2e}} \sqrt{\frac{r(1-p)}{p}}\frac{(1-p)}{p}. \\
\end{align*}

To conclude this introduction, let us announce the structure of the article. In Section \ref{sect:main}, we state with Theorem \ref{theorem_intertwining_order2_hard} our main result about the second order intertwining, after having recalled the first order intertwining; we follow with an application to the ergodicity of birth-death semigroups. In Section \ref{sect:stein},  we firstly present theoretical bounds on Stein's factors derived from the intertwinings, and secondly we investigate the approximation of mixture of distributions. In Section \ref{sect:exe}, our results are applied to a wide range of examples, including M/M/$\infty$ process and Poisson approximation, Galton-Watson process with immigration and negative binomial approximation, and M/M/$1$ process and geometric approximation. The three last sections are devoted to the various proofs of the results previously stated: Section \ref{sect:proof_main} deals with the preparation and proof of our main result Theorem \ref{theorem_intertwining_order2_hard}, Section \ref{sect:proof_stein} gathers the proofs of the bounds on Stein's factors and finally, a useful upper bound related to the pointwise probabilities of the M/M/$\infty$ process is proved in Section \ref{sect:proof_expl}.

 \ \\ \textbf{Acknowledgement:} 
 
The authors thank A. Joulin for the interest he took in this research through many fruitful discussions and for sharing his insights on the subject.

This work was partially supported by the CIMI (Centre International de Math\'{e}matiques et d'Informatique) Excellence program, by the ANR PIECE (ANR-12-JS01-0006-01) and STAB (ANR-12-BS01-0019) and by the Chaire Mod\'elisation Math\'ematique et Biodiversit\'e.

Part of this work has been done while the second author was affiliated to Institut de Math\'ematiques de Toulouse (UMR CNRS 5219) and Universit\'e of Toulouse, France.

\section{Main result}
\label{sect:main}

Before stating our main result Theorem \ref{theorem_intertwining_order2_hard}, let us introduce some notation. The set of positive integers $\left\{1,2,\dots \right\}$ is denoted $\N^*$. For all real-valued functions $f$ on $\N$ and sets $A \subset \N$, we define $\|f\|_{\infty,A}=\sup\left\{|f(x)|,x\in A\right\}$ and $\|f\|_{\infty}=\|f\|_{\infty,\N}$. For all sequences $u$ on $\N$, the shift-forward and shift-backward of $u$ are defined as:
\begin{align*}
\fw{u}(x)&=u(x+1), &x\in \N; && \bw{u}(x)&=u(x-1),& x\in \N^*;&& \bw{u}(0)&=0.
\end{align*}
The symbol $\mathcal{P}$ stands for the set of probability measures on $\N$ and we denote by $\L(W)$ the distribution of the random variable $W$. For all real-valued functions $f$ on $\N$ and $\mu \in \mathcal{P}$, we use indifferently the notation
$$\int{fd\mu}=\mu(f)= \sum_{x \in \mathbb{N}} f(x)\mu(x). $$
Recall that the discrete forward and backward gradients are defined for all real-valued functions $f$ on $\N$ by 
\begin{align*}
\p f(x)&=f(x+1)-f(x),& x\in\N ; && \p^* f(x)&=f(x-1)-f(x),&x\in \N^*, &&\p^*f(0)&=-f(0),
\end{align*}
the convention chosen for $\p^*$ in $0$ being interpreted as a Dirichlet-type condition (implicitly we set $f(-1)=0$). Letting $u$ be a positive sequence, we define the weighted gradients $\p_u$ and $\p_u^*$ respectively by 
\begin{align*}
\p_u&=\frac{1}{u}\p, & \p_u^*&=\frac{1}{u}\p^*.
\end{align*}
With this notation, the generator of the BDP$(\a,\b)$ reads for every function $f:\N \rightarrow \R$ as
\begin{align*}
Lf&=\a\,\p f+\b \,\p^*f.
\end{align*}
Let us assume that the birth rate $\a$ is positive on $\N$ and that the death rate $\b$ is positive on $\N^*$ with moreover $\b(0)=0$. Hence the process is irreducible; to ensure that the process is ergodic and non-explosive we further assume respectively that (\cite{dobrushin1952conditions}, \cite[Corollary 3.18]{chen2004markov})
\begin{align*}
\sum_{x=1}^{+\infty}{\frac{\a(0)\a(1)\dots\a(x-1)}{\b(1)\b(2)\dots \b(x)}}&< \infty, & \sum_{x=1}^\infty{\left(\frac{1}{\a(x)}+\frac{\b(x)}{\a(x)\a(x-1)}+\dots+\frac{\b(x)\dots\b(1)}{\a(x)\dots\a(0)}\right)}&=\infty.
\end{align*}
The measure $\pi$ defined on $\N$ as 
\begin{align}
\label{eq:invariant}
\pi(0)&= \left(1+\sum_{x\geq 1} \prod_{y=1}^x{ \frac{\a(y-1)}{\b(y)}} \right)^{-1}, & \pi(x)&=\pi(0)\prod_{y=1}^x{\frac{\a(y-1)}{\b(y)}}, & x\in \N ,
\end{align} 
is then the invariant, and symmetric, probability measure for the associated semigroup.

Recall that if $(P_t)_{t\geq 0}$ is a Markov semigroup on $\N$ associated to the process $(X_t)_{t\geq 0}$ and if the potential $V:\N\rightarrow\R$ is bounded from below, the Feynman-Kac semigroup $(P_t^V)_{t \geq 0}$ is defined for all bounded or non-negative functions $f$ on $\N$ as 
\begin{align}
(P_t^V f)(x)&=\E\left[f(X_t^x)e^{-\int_0^t{V(X_s^x) ds}}\right],&x\in \N,\quad t\geq 0.
\label{equation_feynman_kac_semigroup}
\end{align} 
When $V$ is positive, the formula \eqref{equation_feynman_kac_semigroup} admits an interpretation involving a killed, or extended, Markov process. Add a new state $a$ to $\N$ and extend functions $f$ on $\N$ to $\N\cup\left\{a\right\}$ by $f(a)=0$. Then, we have:
\begin{align*}
P_t^Vf(x)&=\E\left[f(Y_t^x)\1_{\left\{Y_t^x \neq a \right\}}\right],
\end{align*}
where the process $(Y_t^x)_{t\geq 0}$ is absorbed in $a$ with rate $V(Y_t^x)$. The generator of the process $(Y_t^x)_{t\geq0}$ acts on real-valued functions on $\N\cup\left\{a\right\}$ by the formula 
\begin{equation}
\label{eq:gen_killed}
(Kf)(x)=(Lf|_\N)(x)+V(x)(f(a)-f(x)).
\end{equation}
This interpretation can be extended to the case where $V$ is bounded from below by adding and subtracting a constant to $V$ inside the exponential. 

The Kolmogorov equations associated to the Schr\"odinger operator $L-V$ and the Feynman-Kac semigroup defined in the introduction read for all functions $f$ in the domain of $L$ as
\begin{align}
\p_t P_t^Vf&=(L-V)P_t^V f=P_t^V(L-V)f,& t\geq 0.
\label{equation_Kolomogorov_Feynamn-Kac}
\end{align}
Here $\p_t$ denotes the derivative in time. In the following, when using a Feynman-Kac semigroup, we will always assume that the equation (\ref{equation_Kolomogorov_Feynamn-Kac}) stands for all bounded real-valued functions on $\N$. It is the case for example when $L$ is the generator of a birth-death process with rates $(\a,\b)$, and $\a,\b,V$ are $P_t$-integrable for all $t\geq 0$. \\

In order to state the first intertwining relation, we associate to any positive sequence $u$ a modified birth-death process on $\N$ with semigroup $(P_{u,t})_{t\geq 0}$, generator $L_u$, and potential $V_u$. For all functions $f:\N \rightarrow \R$ set
\begin{align*}
L_u f&= \a_u\, \p f+\b_u\, \p^*f, &V_u&=\a-\a_u+\fw{\b}-\b_u,&\\
\a_u(x)&=\frac{u(x+1)}{u(x)}\a(x+1), & \b_u(x)&=\frac{u(x-1)}{u(x)}\b(x)\1_{x\in \N^*},&x\in \N.
\end{align*}

Under the compacted form $V_u=\p_u\left(\bw{u}\b-u\a \right)$ one can see the parallel with the analogous formulas in the diffusion setting (\cite{bakry1985diffusions, bonnefont2014intertwining}).

We recall now the first order intertwining relation, due to \cite{chafai2013intertwining}.

\begin{theorem}[First order intertwining relation]
\label{theorem_intertwining_order1}
If $V_u$ is bounded from below, it holds for every real-valued function on $\N$ such that $\|\p_uf\|_\infty <+\infty$ that:
\begin{align}
\p_u P_t f&= P_{u,t}^{V_u}\, \p_u f,&t\geq 0.
\label{formula_intertwining_sg_order1}
\end{align}
\end{theorem}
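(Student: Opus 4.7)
My plan is to prove the semigroup intertwining \eqref{formula_intertwining_sg_order1} in two steps: first establish an algebraic commutation at the level of the generators, then lift it to the semigroup level by a standard interpolation argument based on the Kolmogorov equations.

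Step 1: I would verify the pointwise identity
\begin{align*}
\p_u L f \,=\, (L_u - V_u)\, \p_u f.
\end{align*}
Expanding $Lf = \a\,\p f + \b\,\p^* f$ and using $\p L f(x) = Lf(x+1) - Lf(x)$ together with the elementary relations $\p^* f(x+1) = -\p f(x)$ and $\p^* f(x) = -\p f(x-1)$ for $x\geq 1$, one rewrites $\p Lf(x)$ as a three-term sum in $\p f(x-1), \p f(x), \p f(x+1)$. Substituting $\p f(y) = u(y)\p_u f(y)$ and dividing by $u(x)$, the coefficients of $\p_u f(x\pm 1)$ become exactly $\a_u(x)$ and $\b_u(x)$, while the coefficient of $\p_u f(x)$ is $-(\a(x)+\fw{\b}(x))$. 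Matching this with the expansion
\begin{align*}
(L_u - V_u)\p_u f(x) \,=\, \a_u(x)\p_u f(x+1) + \b_u(x)\p_u f(x-1) - \bigl(\a_u(x)+\b_u(x)+V_u(x)\bigr)\p_u f(x)
\end{align*}
forces $V_u(x) = \a(x) - \a_u(x) + \fw{\b}(x) - \b_u(x)$, which is precisely the stated definition of $V_u$. The boundary case $x=0$ has to be checked separately: the convention $\b_u(0)=0$ together with the Dirichlet-type rule $\p^*\p_u f(0) = -\p_u f(0)$ makes the same identity survive there.

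Step 2: Fix $s>0$ and introduce
\begin{align*}
\phi(t) \,=\, P_{u,t}^{V_u}\!\left(\p_u P_{s-t} f\right), \qquad t \in [0,s].
\end{align*}
Differentiating in $t$ with the Kolmogorov equation $\p_t P_{u,t}^{V_u} = (L_u - V_u) P_{u,t}^{V_u}$ and the commutation of $\p_u$ with $\p_t$, one finds
\begin{align*}
\phi'(t) \,=\, P_{u,t}^{V_u}\bigl[(L_u - V_u)\p_u \,-\, \p_u L\bigr] P_{s-t} f \,=\, 0
\end{align*}
by Step 1. Hence $\phi(0) = \phi(s)$, which is precisely \eqref{formula_intertwining_sg_order1} at time $s$.

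The main obstacle is not the algebra but the analytic justification. One must check that $\p_u P_{s-t} f$ stays in a class on which $P_{u,t}^{V_u}$ acts and on which the Kolmogorov equation applies, uniformly in $t\in[0,s]$. The lower bound on $V_u$ ensures that $(P_{u,t}^{V_u})_{t\geq 0}$ is a well-defined bounded perturbation of the birth-death semigroup $(P_{u,t})_{t\geq 0}$, while the hypothesis $\|\p_u f\|_\infty<\infty$ is the input that propagates along the flow: a maximum-principle type argument applied to the minimal solution of the perturbed Kolmogorov equation yields the required uniform bound. Once this regularity point is settled, the interpolation in Step 2 is purely formal.
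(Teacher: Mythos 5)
Your two-step plan---first the pointwise commutation $\p_u Lf=(L_u-V_u)\p_u f$, then interpolation via $J(s)=P_{u,s}^{V_u}(\p_u P_{t-s}f)$ and $J'\equiv 0$---is exactly the argument of Chafa\"{i} and Joulin, which the paper \emph{cites} for Theorem~\ref{theorem_intertwining_order1} rather than reproving; the same scheme is the one the paper spells out to prove the backward-gradient variant (Theorem~\ref{theorem_intertwining_order1_adjoint_gradient} in Section~\ref{sect:backward}) and the second-order Theorems~\ref{theorem_intertwining_order2_hard} and~\ref{theorem_intertwining_order2_easy}. Your Step~1 algebra is correct, and you rightly single out the a priori bound on $\p_u P_{t-s}f$ as the only delicate point---the paper handles the analogous issue by invoking a result of Chen~(1996), whereas your appeal to a maximum-principle argument is left as a gesture. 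One small imprecision: $V_u$ bounded from below does not make $L_u-V_u$ a \emph{bounded} perturbation of $L_u$; it only guarantees that $e^{-\int_0^t V_u(X_s)\,ds}$ is uniformly bounded on each finite horizon, which is what makes the Feynman-Kac semigroup well defined. The proof the paper actually adds of its own (Section~\ref{sect:proofcoupling}) is quite different in spirit: it is a sample-path coupling, restricted to $u=1$ with $\a$ non-increasing and $\b$ non-decreasing, in which $\p P_tf(x)=\E[f(X_t^{x+1})-f(X_t^x)]$ is computed by running the two copies together until an exponential clock with rate $V_1(X_{1,s})$ rings, so the Feynman-Kac weight appears as the probability the copies have not yet merged. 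That route buys a concrete probabilistic reading of the shifted rates $(\fw{\a},\b)$ and of $V_u$, at the cost of generality; your (Chafa\"{i}--Joulin) interpolation works for arbitrary positive $u$ with no monotonicity assumption, at the cost of requiring the extra analytic lemma on propagation of boundedness.
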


Although we will not prove this result in full generality, a new proof is proposed in Section \ref{sect:proofcoupling} when the weight is $u=1$, the birth rates $\a$ are non-increasing and the death rates $\b$ are non-decreasing. This proof is based on a coupling argument and gives a probabilistic interpretation of the semigroup (and its jump rates) in the right-hand side of \eqref{formula_intertwining_sg_order1}.\\

We now turn to the main theorem of this article. Let $u$ and $v$ be positive sequences and assume that the potential $V_u$ defined above is non-increasing on $\N$. We define a modified process on $\N$ with semigroup $(P_{u,*v,t})_{t\geq 0}$ and generator $L_{u,*v}$ as follows: for all real-valued functions $f$ on $\N$, set
\begin{align*}
(L_{u,*v}f)(x)&=\a_{u,*v}(x)\p f(x)+\b_{u,*v}(x)\p^* f(x)&\\
&+(\p_v^* V_u)(x)\left(\sum_{j=0}^{x-2}{v(j)}\right)\sum_{k=0}^{x-2}{\frac{v(k)}{\left(\sum_{j=0}^{x-2}{v(j)}\right)}(f(k)-f(x))},& x &\geq 2,\\
(L_{u,*v}f)(x)&=\a_{u,*v}(x)\p f(x)+\b_{u,*v}(x)\p^* f(x),& x&=0,1,\\
\a_{u,*v}(x)&=\frac{v(x+1)}{v(x)}\frac{u(x+1)}{u(x)}\a(x+1),&x&\in \N,\\
\b_{u,*v}(x)&=\frac{v(x-1)}{v(x)}\frac{u(x-2)}{u(x-1)}\b(x-1)+v(x-1)\,\p_v^* V_u(x),& x&\geq 2,\\
\b_{u,*v}(1)&=v(0)\,\p_v^* V_u(1), \quad\b_{u,*v}(0)=0.&&
\end{align*}
In contrast with the previous semigroups, this modified process is not a birth-death process in general. Indeed, if the process starts at a point $x \geq 2$, it can jump on the set $\left\{0,\dots ,x-2 \right\}$ with rate $(\p_v^* V_u)(x)\left(\sum_{j=0}^{x-2}{v(j)}\right)$. Remark that both this quantity and the death rate in $1$, $\b_{u,*v}(1)=(\p^*V_u)(1)$, are non-negative thanks to the hypothesis $V_u$ non-increasing on $\N$. We also define the potential $V_{u,*v}$ as 
\begin{align*}
V_{u,*v}(x)&=\left(1+\frac{u(x)}{u(x-1)}\right)\a(x)-\left(1+\frac{v(x+1)}{v(x)}\right)\frac{u(x+1)}{u(x)}\a(x+1)&\\
&+\b(x+1)-\frac{v(x-1)}{v(x)}\frac{u(x-2)}{u(x-1)}\b(x-1)-\left(\sum_{j=0}^{x-1}{v(j)}\right)\p_v^*V_u(x),& x \geq 1,\\
V_{u,*v}(0)&=\a(0)-\left(1+\frac{v(1)}{v(0)}\right)\frac{u(1)}{u(0)}\a(1)+\b(1).&
\end{align*}
 
We are ready to state our main result.

\begin{theorem}[Second order intertwining relation]
\label{theorem_intertwining_order2_hard}
Assume that $V_u$ is non-increasing, bounded from below, that $\inf_{x\in \N} v(x)>0$ and that $V_{u,*v}$ is bounded from below. Then for every real-valued function on $\N$ such that $\|\p_uf\|_\infty <+\infty$, we have
\begin{align*}
\p_{v}^*\p_u( P_t f)&= P_{u,*v,t}^{V_{u,*v}} \,(\p_{v}^*\p_u f),&t\geq 0.
\end{align*}
\end{theorem}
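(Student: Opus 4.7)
The plan is to reduce the second order intertwining to a first order intertwining with respect to the backward weighted gradient $\p_v^*$, then iterate Theorem \ref{theorem_intertwining_order1}. Starting from the first order relation $\p_u P_t f = P_{u,t}^{V_u}(\p_u f)$ and applying $\p_v^*$ on both sides, we are reduced to establishing that for every bounded real-valued $g$ on $\N$ (we will apply this with $g = \p_u f$),
\begin{equation*}
\p_v^* P_{u,t}^{V_u}\, g \;=\; P_{u,*v,t}^{V_{u,*v}}\, (\p_v^* g), \qquad t \geq 0. \tag{$\ast$}
\end{equation*}
The identity $(\ast)$ is itself an intertwining of first order type, but between two Feynman-Kac semigroups, one of birth-death type and one with a non-local jump component.

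To prove $(\ast)$, I would argue by uniqueness of Kolmogorov equations. Both sides coincide at $t=0$ with $\p_v^* g$. Denoting by $F(t)$ and $G(t)$ the left- and right-hand sides, the Kolmogorov equation \eqref{equation_Kolomogorov_Feynamn-Kac} for the Feynman-Kac semigroup $(P_{u,*v,t}^{V_{u,*v}})$ gives $\p_t G = (L_{u,*v} - V_{u,*v}) G$. Differentiating $F$ in $t$ instead gives $\p_t F = \p_v^*(L_u - V_u) P_{u,t}^{V_u} g$. Hence it suffices to establish the pointwise commutation identity at the level of generators,
\begin{equation*}
\p_v^*\,(L_u - V_u)\, h \;=\; (L_{u,*v} - V_{u,*v})\, \p_v^* h, \qquad x \in \N, \tag{$\ast\ast$}
\end{equation*}
for $h$ in a sufficiently rich class; then a standard uniqueness argument (the hypotheses $V_u$ non-increasing, $\inf v > 0$, and $V_{u,*v}$ bounded from below ensuring non-negativity of the jump rates of $L_{u,*v}$ and well-posedness of the Feynman-Kac semigroup) concludes $F = G$.

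The main obstacle is the verification of $(\ast\ast)$. This is a careful but bookkeeping-heavy algebraic computation: one expands $\p_v^*(L_u h - V_u h)(x)$ for $x \geq 2$ and separately for $x = 0, 1$ to account for the Dirichlet-type convention $\p_v^* h(0) = -h(0)/v(0)$, and regroups the resulting differences of values of $h$ at $x-2, x-1, x, x+1$ into terms of the form $\a_{u,*v}\p(\p_v^* h)$, $\b_{u,*v}\p^*(\p_v^* h)$, $V_{u,*v}\cdot \p_v^* h$, plus a residual. The nonlocal jump component appearing in $L_{u,*v}$ for $x\geq 2$ originates from the identity
\begin{equation*}
\sum_{k=0}^{x-2} v(k)\, \p_v^* h(k) \;=\; -\,h(x-2),
\end{equation*}
which telescopes thanks precisely to the boundary convention at $0$; multiplying by the non-negative factor $\p_v^* V_u(x)$ (which is where the monotonicity hypothesis on $V_u$ is used) produces the long-range jump rate. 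Once $(\ast\ast)$ is checked, plugging $h = P_{u,t}^{V_u} g$ and integrating gives $(\ast)$, and combining with Theorem \ref{theorem_intertwining_order1} delivers the announced formula $\p_v^* \p_u P_t f = P_{u,*v,t}^{V_{u,*v}}(\p_v^* \p_u f)$.
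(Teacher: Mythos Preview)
Your proposal is correct and follows essentially the same route as the paper: the core step is the generator-level commutation $\p_v^*(L_u - V_u)h = (L_{u,*v} - V_{u,*v})\p_v^* h$, which the paper derives by combining the backward first-order intertwining for $L_u$ with the product rule $\p_v^*(V_u g)(x) = V_u(x)\p_v^* g(x) + (\p_v^* V_u)(x)\,g(x-1)$ and the same telescoping identity you invoke, producing the non-local jump term exactly where you indicate. The only tactical difference is the passage from generators to semigroups: rather than appeal to an abstract uniqueness result, the paper interpolates via $J(s) = P_{u,*v,s}^{V_{u,*v}}(\p_v^*\p_u P_{t-s} f)$ and shows $J'(s)=0$, after checking that $\p_v^*\p_u P_t f$ is bounded using the first-order relation and the hypothesis $\inf_\N v > 0$.
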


Since some preparation is needed, the proof of Theorem \ref{theorem_intertwining_order2_hard} is postponed to Section \ref{sect:proof_main}.

\begin{remark}[Propagation of convexity ?]
\label{rq:convexity}
Under the assumptions of Theorem \ref{theorem_intertwining_order2_hard}, if $\p_v^*\p_u f$ is non-negative, so is $\p_v^*\p_u P_t f$ for all $t\geq 0$. A similar property for the first order intertwining admits an interpretation in terms of propagation of monotonicity (\cite[Remark 2.4]{chafai2013intertwining}): the intertwining relation \eqref{formula_intertwining_sg_order1} implies that if a function $f:\N\rightarrow\R$ is non-decreasing, then so is $P_t f$ for every $t\geq 0$. However, it is not clear whether there is an analogous nice interpretation for the second order intertwining because, in contrast to the continous space case, the condition $\p_v^*\p_u f\geq0$ is not equivalent to the convexity of $f$ (even for $u=v=1$).
\end{remark}

Let us comment further on Theorem \ref{theorem_intertwining_order2_hard}. The interpretation of a Feynman-Kac semigroup as an extended Markov semigroup sheds light on various aspects of Theorem \ref{theorem_intertwining_order2_hard}. As the first-order potential $V_u$ is bounded from below, recall that the Feynman-Kac semigroup $(P_{u,t}^{V_u})_{t\geq 0}$ appearing in the right-hand side of equation \eqref{formula_intertwining_sg_order1} can be represented as a Markov semigroup $(S_t)_{t\geq 0}$ related to the process $(Y_t)_{t\geq 0}$ on $\N \cup \left\{-1\right\}$ by adding a point $a=-1$. The Markov process $(Y_t)_{t\geq 0}$ is then non-irreducible and absorbed in $-1$. To differentiate again in the equation \eqref{formula_intertwining_sg_order1} amounts to differentiate the Markov semigroup $(S_t)_{t\geq 0}$.

Firstly, this explains intuitively the use of the backward weighted gradient $\p_u^*$ instead of the regular weighted gradient $\p_u$.  Indeed, to deal with the absorption of the Markov process in $-1$, additional information at the boundary is needed. The use of $\p^*$ gives the missing information, since the knowledge of $\p^*g$ is equivalent to the knowledge of $\p g$ in addition with the knowledge of $g(0)=-\p^* g(0)$. 

Secondly, this allows to understand the hypotheses required for Theorem \ref{theorem_intertwining_order2_hard} to apply. The main assumption of this theorem is $V_u$ to be non-increasing. As noticed before, this assumption is necessary in order to have well-defined objects. The following remark provides another justification.

\begin{remark}[Around the monotonicity assumption]
\label{rq:ajoutdepoint}
On the one hand, the second intertwining relation is equivalent to a first intertwining relation for the extended Markov semigroup $(S_t)_{t\geq 0}$. On the other hand, if a first intertwining relation holds for $(S_t)_{t\geq 0}$, then $(S_t)_{t\geq 0}$ propagates the monotonicity. Set $f= \mathbf{1}_{\N} = 1- \mathbf{1}_{\{-1\}}$. Then for all $x,y\in \N$, $S_0 f(x)=S_0 f(y)=1$ and by formula (\ref{eq:gen_killed}),
\begin{eqnarray*}
\p_t(S_t f)(x)|_{t=0}&=&(L_u f|_\N )(x) +V_u(x) (f(-1)-f(x))=-V_u(x),\\
\p_t\big(S_t f(x)-S_t f(y)\big)|_{t=0}&=&V_u(y)-V_u(x).
\end{eqnarray*} 
 
The function $f$ is non-decreasing on $\N \cup \left\{-1\right\}$ and a necessary condition for $S_t f$ to be non-decreasing for all $t\geq 0$ is, in the light of the preceding equation, that $V_u(y)-V_u(x) \leq 0$ whenever $x \leq y$, i.e. $V_u$ is non-increasing on $\N$.\\
\end{remark}

If $V_u$ is constant, then Theorem \ref{theorem_intertwining_order2_hard} admits a variant involving the gradient $\p_v \p_u$ instead of $\p_v^* \p_u$, which is stated in Theorem \ref{theorem_intertwining_order2_easy} below for the sake of completeness. In the applications, when $V_u$ is constant, we choose to invoke Theorem \ref{theorem_intertwining_order2_easy} in lieu of Theorem \ref{theorem_intertwining_order2_hard}, because the underlying arguments are much simpler.
Indeed, in this case the equation \eqref{formula_intertwining_sg_order1} reduces to 
\begin{align*}
\p_u P_t f&= e^{-V_u t} P_{u,t} \p_u f,&t\geq 0,
\end{align*}
and it is no longer required to extend artificially the Markov process, nor to add information at the boudary, in order to differentiate a second time. As a matter of fact, one can notice that if $V_u$ is constant, then the BDP associated to the semigroup $(P_{u,*v,t})_{t\geq 0}$ of Theorem \ref{theorem_intertwining_order2_hard} do not visit the state $0$ unless it starts there. 

In order to state the theorem, a new birth-death semigroup $(P_{u,v,t})_{t\geq 0}$ with generator $L_{u,v}$ and a potential $V_{u,v}$ is introduced. Set for all real-valued functions on $\N$:
\begin{align*}
L_{u,v}f(x)&=\a_{u,v}\p f(x)+\b_{u,v}\p^* f(x),&x\in \N,\\
\a_{u,v}(x)&=\frac{v(x+1)}{v(x)}\frac{u(x+2)}{u(x+1)}\a(x+2),\quad \b_{u,v}(x)=\frac{v(x-1)}{v(x)}\frac{u(x-1)}{u(x)}\b(x),&x\in \N,\\
V_{u,v}(x)&=\a(x)-\frac{v(x+1)}{v(x)}\frac{u_{x+2}}{u(x+1)}\a(x+2)+\left(\frac{u(x)}{u(x+1)}+1\right)\b(x+1)&\\
&-\left(1+\frac{v(x-1)}{v(x)}\right)\frac{u(x-1)}{u(x)}\b(x),&x\in \N.
\end{align*}
In contrast to the Markov semigroup $(P_{u,*v,t})_{t\geq 0}$, the semigroup $(P_{u,v,t})_{t\geq 0}$ is always  a birth-death semigroup.

\begin{theorem}[Alternative version of the second intertwining relation]
\label{theorem_intertwining_order2_easy} 
Assume that $V_u$ is constant on $\N$ and that $V_{u,v}$ is bounded from below. For all real-valued functions on $\N$ such that $\|\p_uf\|_\infty <+\infty$ and $\|\p_v\p_uf\|_\infty <+\infty$, we have:
\begin{align*}
\p_{v}\p_u( P_t f)&= P_{u,v,t}^{V_{u,v}}\, (\p_{v}\p_u f),&t\geq 0.\\
\end{align*}
\end{theorem}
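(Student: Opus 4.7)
The plan is to obtain Theorem \ref{theorem_intertwining_order2_easy} by applying the first order intertwining (Theorem \ref{theorem_intertwining_order1}) \emph{twice}, taking advantage of the fact that when $V_u$ is constant the Feynman--Kac factor collapses to a scalar exponential and does not interfere with a second differentiation.

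First, write $V_u \equiv c$ for some constant $c \in \R$. Since $V_u$ is bounded from below, Theorem \ref{theorem_intertwining_order1} applies and reads
\begin{align*}
\p_u P_t f &= P_{u,t}^{V_u}\, \p_u f = e^{-ct}\, P_{u,t}\, \p_u f,
\end{align*}
where the second equality uses the definition \eqref{equation_feynman_kac_semigroup} of the Feynman--Kac semigroup for a constant potential. The assumption $\|\p_u f\|_\infty < +\infty$ ensures that $\p_u f$ is a legitimate test function.

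Next, I would apply $\p_v$ to both sides and invoke Theorem \ref{theorem_intertwining_order1} a second time, but now for the birth--death semigroup $(P_{u,t})_{t \ge 0}$ associated with the rates $(\a_u, \b_u)$, with weight $v$. Let $\widetilde{\a}$, $\widetilde{\b}$, $\widetilde{V}$ denote the intertwined rates and potential obtained from the definitions of Section \ref{sect:main} applied to $(\a_u, \b_u, v)$ in place of $(\a, \b, u)$. A direct computation gives
\begin{align*}
\widetilde{\a}(x) &= \frac{v(x+1)}{v(x)}\a_u(x+1) = \a_{u,v}(x), & \widetilde{\b}(x) &= \frac{v(x-1)}{v(x)}\b_u(x) = \b_{u,v}(x),
\end{align*}
so that the ``doubly intertwined'' semigroup is exactly $(P_{u,v,t})_{t \ge 0}$. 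Computing the associated potential similarly,
\begin{align*}
\widetilde{V}(x) &= \a_u(x) - \a_{u,v}(x) + \b_u(x+1) - \b_{u,v}(x),
\end{align*}
and expanding using the definitions of $\a_u, \b_u$ and the identity $V_u(x) = \a(x) - \a_u(x) + \b(x+1) - \b_u(x) = c$, one finds $\widetilde{V}(x) = V_{u,v}(x) - c$. In particular $\widetilde{V}$ is bounded from below because $V_{u,v}$ is, so the second application of Theorem \ref{theorem_intertwining_order1} is legitimate. The assumption $\|\p_v \p_u f\|_\infty < +\infty$ supplies the required bound on $\p_v(\p_u f)$.

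Combining the two steps,
\begin{align*}
\p_v \p_u P_t f &= e^{-ct}\, \p_v\, P_{u,t}\, \p_u f = e^{-ct}\, P_{u,v,t}^{\widetilde{V}}\, \p_v \p_u f = e^{-ct}\, e^{ct}\, P_{u,v,t}^{V_{u,v}}\, \p_v \p_u f,
\end{align*}
where the last equality uses again that shifting the potential by a constant only multiplies the Feynman--Kac semigroup by a scalar exponential. This yields the claimed identity. The only non-routine step is the bookkeeping of the second paragraph, verifying that the iterated rates and potential coincide with $(\a_{u,v}, \b_{u,v}, V_{u,v})$; everything else is a direct consequence of Theorem \ref{theorem_intertwining_order1} and the scalar nature of $V_u$.
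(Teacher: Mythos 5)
Your proof is correct, and it takes a genuinely different route from the paper's. The paper works at the generator level: it establishes the intertwining $\p_v\p_u L = (L_{u,v}-V_{u,v})\p_v\p_u$ by applying the generator-level first-order intertwinings twice, using the constancy of $V_u$ to commute $\p_v$ past $V_u$ (this is the relation $\p_v\p_u(V_u\p_u f)=V_u\p_v\p_u f$ they mention), and then lifts to semigroups via the Kolmogorov-equations argument used in the proofs of Theorems \ref{theorem_intertwining_order1_adjoint_gradient} and \ref{theorem_intertwining_order2_hard}. You instead stay entirely at the semigroup level: the constancy of $V_u=c$ makes $P_{u,t}^{V_u}=e^{-ct}P_{u,t}$ a scalar multiple of an honest Markov semigroup, so Theorem \ref{theorem_intertwining_order1} can be reapplied as a black box to $(P_{u,t})$ with weight $v$, and a short computation identifies the iterated rates and potential with $(\a_{u,v},\b_{u,v},V_{u,v}-c)$; the bookkeeping you wrote down checks out, including the identity $\widetilde V = V_{u,v}-c$ and hence the cancellation of the two exponentials. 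Both routes crucially exploit the constancy of $V_u$, but yours is more economical because it avoids re-deriving the semigroup statement from the generator statement and needs no separate boundedness argument for $\p_v\p_u P_t f$ — that boundedness comes for free from the conclusion of the second application of the first-order theorem. Your derivation is also consistent with the paper's observation (Remark \ref{rq:link_easy_hard}) that Theorem \ref{theorem_intertwining_order2_easy} cannot in general be deduced from Theorem \ref{theorem_intertwining_order2_hard}: you deduce it from Theorem \ref{theorem_intertwining_order1}, not from Theorem \ref{theorem_intertwining_order2_hard}.
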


\begin{remark}[Link between the two versions of the second intertwining]
\label{rq:link_easy_hard}
Surprisingly, it is only possible to deduce directly Theorem \ref{theorem_intertwining_order2_easy} from Theorem \ref{theorem_intertwining_order2_hard} in the case where the sequence $v$ is constant. When $v=1$ for instance, one can write that $\p^* \p_u f(\cdot+1)=-\p\p_u f$, yielding under the appropriate assumptions on $f:\N\rightarrow\R$ that:
\begin{align*}
P_{u,1,t}f(x)&=P_{u,*1,t}\overleftarrow{f}(x+1), &x\in \N,\quad t\geq 0.
\end{align*}
At the level of the processes, this equation can be reformulated into the equality in law:
\begin{align*}
X_{1,u,t}^x&=X_{1,*u,t}^{x+1}-1,&x\in \N,\quad t\geq 0,
\end{align*} 
where $(X_{1,*u,t}^x)_{t\geq 0}$ and $(X_{1,u,t}^x)_{t\geq 0}$ are the Markov processes corresponding respectively to the semigroups $(P_{u,*1,t})_{t\geq 0}$ and $(P_{u,1,t})_{t\geq 0}$.
If $v$ is not constant, no similar relation holds in general.
\end{remark}

\begin{remark}[Other versions]
It is possible to derive similar theorems for other gradients. For example, if the gradient $\p^\star$ is defined as $\p^\star f=\p^* f$ on $\N^*$ and with the Neumann-like boundary condition in $0$, $\p^\star f(0)=0$, then the analogous theorem to Theorem \ref{theorem_intertwining_order2_hard} holds for $\p_v\p_u^\star$. It is also possible to derive intertwining relations in the case where the semigroup lives on $\llbracket 0,n\rrbracket$, although the underlying structures are rather different: for instance, the condition $V_u$ non-increasing is no longer necessary.\\
\end{remark}

Let us turn to our first application of Theorem \ref{theorem_intertwining_order2_hard} and its variant Theorem \ref{theorem_intertwining_order2_easy}. The first order intertwining relation recalled in Theorem \ref{theorem_intertwining_order1} yields a contraction property in Wasserstein distance. Precisely, under the assumptions of Theorem \ref{theorem_intertwining_order1}, by \cite[Corollary 3.1]{chafai2013intertwining}, we have for all $\mu,\nu\in \mathcal{P}$,
\begin{equation}
\label{eq:contraction_wasserstein}
W_{d_u}(\mu P_t,\nu P_t)\leq e^{-\sigma(u)t}W_{d_u}(\mu,\nu),
\end{equation}
where the distance $d_u$ on $\N$ and the related Wasserstein distance $W_{d_u}$ on $\mathcal{P}$ are defined in the forthcoming section, Section \ref{sect:dist}. Similarly, Theorems \ref{theorem_intertwining_order2_hard} and \ref{theorem_intertwining_order2_easy} lead to a contraction property for the distances $\zeta_{u,*v}$ and $\zeta_{u,v}$, defined respectively for two sequence of positive weights $u$ and $v$ by
\begin{align*}
\zeta_{u,*v}&=\sup_{f \in \mathcal{F}_{u,*v}}|\mu(f)-\nu(f)|,& \mathcal{F}_{u,*v}&=\left\{f:\N\rightarrow\R,\,\|\p_u f\|_\infty <\infty,\,\|\p_v^* \p_uf\|_\infty\leq 1\right\},\\
\zeta_{u,v}&=\sup_{f \in \mathcal{F}_{u,v}}|\mu(f)-\nu(f)|,& \mathcal{F}_{u,v}&=\left\{f:\N\rightarrow\R,\,\|\p_u f\|_\infty <\infty,\,\|\p_v \p_uf\|_\infty\leq 1\right\}.
\end{align*}
We call $\zeta_{u,*v}$ and $\zeta_{u,v}$ second order Zolotarev-type distances since they are simple metric distances in the sense of Zolotarev (\cite{zolotarev1976metric}) and can be seen as the discrete counterparts of the distance $\zeta_{2}$ defined on the set of real probability distributions (the distance $\zeta_{2}$, introduced in \cite{zolotarev1976metric} and further studied in \cite{rio1998distances}, is associated to the set of continuously differentiable functions on $\R$ whose derivative is Lipschitz). The contraction property reads as follows:

\begin{theorem}[Contraction of the BDP in second order distances]
\label{theorem_contraction_semigroup_zolotarev_distance}
\begin{itemize}\
\item Under the same hypotheses as in Theorem \ref{theorem_intertwining_order2_hard}, we set $\sigma(u,*v)=\inf V_{u,*v}$. Then, for all $\mu,\nu \in \mathcal{P}$, we have:
\begin{align}
\zeta_{u,*v}(\mu P_t, \nu P_t)&\leq e^{-\sigma(u,*v)t}\zeta_{u,*v}(\mu, \nu).
\label{eq:contraction_zolo_1}
\end{align}

\item Under the assumptions of Theorem \ref{theorem_intertwining_order2_easy}, define $\sigma(u,v)=\inf V_{u,v}$. Letting $\mu,\nu \in \mathcal{P}$, it stands that:
\begin{align}
\zeta_{u,v}(\mu P_t, \nu P_t)&\leq e^{-\sigma(u,v)t}\zeta_{u,v}(\mu, \nu).
\label{eq:contraction_zolo_2}
\end{align}
\end{itemize}  
\end{theorem}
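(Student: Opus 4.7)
The plan is to reduce the contraction estimate to a pointwise bound on the second-order weighted gradient of $P_t f$, obtained by combining Theorem \ref{theorem_intertwining_order2_hard} (resp.\ Theorem \ref{theorem_intertwining_order2_easy}) with the trivial Feynman-Kac estimate $\|P^{V}_t g\|_\infty \leq e^{-t\inf V}\|g\|_\infty$ valid whenever $V$ is bounded from below. The argument is the same for the two cases; I describe it for the first and only indicate the minor change for the second.

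Fix $\mu,\nu\in\mathcal{P}$ and $t\geq 0$. Given any test function $f\in \mathcal{F}_{u,*v}$, rewrite
\[
(\mu P_t)(f)-(\nu P_t)(f)=\mu(P_tf)-\nu(P_tf),
\]
so it suffices to show that $e^{\sigma(u,*v)t}\,P_tf$ lies in $\mathcal{F}_{u,*v}$. First, the second-order intertwining yields
\[
\p_v^*\p_u(P_tf)= P_{u,*v,t}^{V_{u,*v}}(\p_v^*\p_u f),
\]
and since $V_{u,*v}$ is bounded from below by $\sigma(u,*v)$, the Feynman-Kac contraction gives $\|\p_v^*\p_u(P_tf)\|_\infty\leq e^{-\sigma(u,*v)t}\|\p_v^*\p_u f\|_\infty\leq e^{-\sigma(u,*v)t}$. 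Second, I need to verify the side condition $\|\p_u(P_tf)\|_\infty<\infty$: this follows directly from the first-order intertwining of Theorem \ref{theorem_intertwining_order1}, namely $\p_u P_tf=P_{u,t}^{V_u}(\p_uf)$, combined with $\|\p_uf\|_\infty<\infty$ and $V_u$ bounded from below. Consequently $e^{\sigma(u,*v)t} P_tf \in\mathcal{F}_{u,*v}$, so that
\[
|\mu(P_tf)-\nu(P_tf)|=e^{-\sigma(u,*v)t}\,\bigl|\mu\bigl(e^{\sigma(u,*v)t}P_tf\bigr)-\nu\bigl(e^{\sigma(u,*v)t}P_tf\bigr)\bigr|\leq e^{-\sigma(u,*v)t}\,\zeta_{u,*v}(\mu,\nu).
\]
Taking the supremum over $f\in\mathcal{F}_{u,*v}$ delivers \eqref{eq:contraction_zolo_1}.

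For the second part \eqref{eq:contraction_zolo_2}, replace Theorem \ref{theorem_intertwining_order2_hard} by Theorem \ref{theorem_intertwining_order2_easy} to control $\p_v\p_u P_tf=P_{u,v,t}^{V_{u,v}}(\p_v\p_uf)$, and use $V_{u,v}\geq\sigma(u,v)$ in the same way. The side condition $\|\p_u P_tf\|_\infty<\infty$ is again provided by Theorem \ref{theorem_intertwining_order1} (noting that, under the hypotheses of Theorem \ref{theorem_intertwining_order2_easy}, $V_u$ is a fortiori bounded from below).

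The only non-routine step is the verification that $P_t f$ remains in the test class after the action of the semigroup, and this is precisely where the first-order intertwining is needed as an auxiliary input; everything else is the standard Feynman-Kac sup-norm bound. No further structure of the non-local semigroup $(P_{u,*v,t})_{t\geq 0}$ (such as sub-Markovianity or positivity preservation) is used beyond the representation \eqref{equation_feynman_kac_semigroup}, which already incorporates the bound $|P_t^Vg(x)|\leq e^{-t\inf V}\|g\|_\infty$.
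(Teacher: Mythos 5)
Your proof is correct and follows essentially the same route as the paper: apply the second-order intertwining to get $\|\p_v^*\p_u P_tf\|_\infty\leq e^{-\sigma(u,*v)t}\|\p_v^*\p_uf\|_\infty$ and then transfer this bound to the supremum defining $\zeta_{u,*v}$. You are somewhat more explicit than the paper in checking the side condition $\|\p_u P_tf\|_\infty<\infty$ via the first-order intertwining, a step the paper's proof leaves implicit, but this is a refinement of detail rather than a different argument.
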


\begin{proof}
The proof is done in the first case, the second one being similar. For all real-valued functions $f$ on $\N$ such that $\|\p_u f\|_\infty < \infty$ and $\|\p_v^* \p_u f\|_\infty \leq 1$, Theorem \ref{theorem_intertwining_order2_hard} implies that
\begin{align*}
\|\p_v^* \p_u P_t f\|_\infty &\leq \, e^{-\sigma(u,*v)t} \|\p_v^* \p_uf\|_\infty  \,\leq \, e^{-\sigma(u,*v)t}, & t\geq 0.
\end{align*} 
 Hence,
\begin{eqnarray*}
\zeta_{u,*v}(\mu P_t, \nu P_t)&=&\sup\left\{\left|\int{P_tf \,d\mu}-\int{P_tf \,d\nu}\right|,\,{\|\p_v^* \p_u f\|_\infty\leq 1}\right\}\\
&\leq & \sup\left\{\left|\int{g \,d\mu}-\int{g \,d\nu}\right|,\,{\| g\|_\infty\leq e^{-\sigma(u,*v)t}}\right\}\\
&=&e^{-\sigma(u,*v)t}\zeta_{u,*v}(\mu, \nu).
\end{eqnarray*}
\end{proof}

If the quantity $\sigma(u,*v)$ (resp. $\sigma(u,v)$) is positive, the first bound (resp. the second) is a contraction. In particular, if we take $\nu=\pi$ the invariant measure of the BDP, then Theorem \ref{theorem_contraction_semigroup_zolotarev_distance} gives the rate of convergence of the BDP towards its invariant measure in a second order distance.

\begin{remark}[Generalization and optimality]\
\label{rq:zolo-gen}
\begin{itemize}
\item The proof of Theorem \ref{theorem_contraction_semigroup_zolotarev_distance} can be generalized to the Zolotarev-type distance associated to the set of functions $f:\N\rightarrow \R$ such that $\|Df\|_\infty\leq 1$ as soon as we have an inequality of the type $\|D P_t f \|_\infty \leq e^{- \sigma t} \| Df \|_\infty$ for every $t\geq0$, some $\sigma>0$ and some finite difference operator $D$. In Section \ref{sect:exe} below, we detail such convergences in higher order Zolotarev-type distances.

\item By arguments similar to those developed in \cite[corollary 3.1]{chafai2013intertwining}, one can prove that the constants $\sigma(u,*v)$ and $\sigma(u,v)$ in the equations \eqref{eq:contraction_zolo_1} and \eqref{eq:contraction_zolo_2} are optimal. Indeed, the argument of \cite{chafai2013intertwining} relies on the propagation of the monotonicity and we have the analogous property at the second order (cf Remark \ref{rq:convexity}).

\item Using \cite[Theorem 9.25]{chen2004markov}, we see that, choosing a good sequence $u$, it is possible to obtain the contraction in the Wasserstein distance \eqref{eq:contraction_wasserstein} at a rate corresponding to the spectral gap (even if there is no corresponding eigenvector). For the second order, we do not know if it is always possible to find sequences $u,v$ such that $\sigma(*u,v)$ or $\sigma(u,v)$ is equal to the second smallest positive eigenvalue of $-L$. 

\end{itemize}
\end{remark}

In the following section we focus our attention on our main application of intertwining relations, Stein's factors.
\section{Application to Stein's magic factors}

\label{sect:stein}
\subsection{Distances between probability distributions}
\label{sect:dist}
First of all, we introduce the distances between probability measures used to measure approximations in the sequel. They are of the form
$$\zeta_{\mathcal{F}}(\mu,\nu)=\sup \left\{\left|\mu(f)-\nu(f) \right|,\,f \in \mathcal{F}\right\},$$
where $\mathcal{F}$ is a subset of the set of real-valued functions on $\N$. The distances $\zeta_{u,*v}$, $\zeta_{u,v}$ presented at the end of the preceding section were examples of such distances; we now recall the definition of three classical distances on $\mathcal{P}$.\\

\textbf{Total variation distance.}
The total variation distance $d_{\mathrm{TV}}$ is the distance associated to the set $\mathcal{F}_{\mathrm{TV}}$ of real-valued functions on $\N$ such that $0 \leq f \leq 1$. In contrast to the continous space case, the topology induced by the total variation distance on $\N$ is exactly the convergence in law. Some authors prefer to define the total variation distance as the distance associated to the set $\mathcal{F}=\left\{f:\N\rightarrow\R, \|f\|_\infty \leq 1\right\}$. The two definitions vary by a factor $\frac{1}{2}$:
\begin{align*}
d_{\mathrm{TV}}(\mu,\nu)&=\sup_{0\leq f \leq 1}\left|\mu(f)-\nu(f) \right|=\frac{1}{2}\sup_{\|f\|_\infty \leq 1}\left|\mu(f)-\nu(f) \right| = \frac{1}{2} \sum_{x \in \mathbb{N}} |\mu(x) - \nu(x)|.
\end{align*}


\textbf{Wasserstein distance.}
For a distance $d$  on $\N$ let us call $\text{Lip}(d)$ the set of real-valued functions on $\N$ such that 
\begin{align*}
|f(x)-f(y)|&\leq d(x,y),&x,y \in \N.
\end{align*}
The Wasserstein distance between two probability measures $\mu$ and $\nu$ of $\mathcal{P}$ is defined as 
$$W_d(\mu,\nu)=\inf{\int{d(x,y)d\Pi(x,y)}},$$ 
where the infimum is taken over all probability measures $\Pi$ on $\N^2$ whose first marginal is $\mu$ and second marginal is $\nu$. By Kantorovich-Rubinstein theorem (see e.g. \cite{szulga1982minimal}), 
$$W_d(\mu,\nu)=\zeta_{\text{Lip}(d)}(\mu,\nu).$$ 
For a positive sequence $u$, define the distance $d_u$ on $\N$ as 
\begin{align*}
d_u(x,y)&=\sum_{k=x}^{y-1}{u(k)},\quad x< y;& d_u(x,y)&=d_u(y,x), \quad x>y;& d_u(x,y)&=0,\quad  x=y.
\end{align*}
Let us observe that $\text{Lip}(d_u)=\left\{f:\N\rightarrow\R,\|\p_u f\|_\infty\leq 1\right\}$. Hence
$$W_{d_u}(\mu,\nu)=\sup_{f\in \lip(d_u)}\left|\mu(f)-\nu(f) \right|=\sup_{\|\p_u f\|_\infty\leq 1}\left|\mu(f)-\nu(f) \right|.$$
The distance associated to the constant sequence equal to $1$ is the usual distance $d_1(x,y)=|x-y|$. We denote by $W=W_{d_1}$ the associated Wasserstein distance. \\

\textbf{Kolmogorov distance.}  The Kolmogorov distance is defined as the metric distance associated to the set $\mathcal{F}_{K}$ of indicator functions of intervals $[0,x]$:
\begin{align*}
d_K(\mu,\nu)&=\sup_{x\in \N}{\left|\mu([0,x])-\nu([0, x])\right|}.
\end{align*}

\textbf{Comparison between distances.} For all $\mu,\nu \in \mathcal{P}$,
\begin{equation*}
d_K(\mu,\nu) \,\leq \,d_{\mathrm{TV}}(\mu,\nu)\,\leq \,\frac{1}{\inf_\N u} W_{d_u}(\mu,\nu).
\end{equation*}

Indeed, both inequalities are consequences of the inclusions $$\mathcal{F}_{K} \subset \mathcal{F}_{\mathrm{TV}} \subset \frac{1}{\inf_\N u}\text{Lip}(d_u).$$
The second inclusion follows from the implication
\begin{equation*}
0\leq f \leq 1 \Rightarrow \|\p f\|_\infty\leq \frac{1}{\inf_\N u}.
\end{equation*}

The total variation distance is invariant by translation, whereas intuitively the Wasserstein distance gives more weight to the discrepancy between $\mu(x),\nu(x)$ if it occurs for a large integer $x$. The Kolmogorov distance may be used as an alternative to the total variation distance when the latter is too strong to measure the involved quantities.

\subsection{Basic facts on Stein's method}
Given a probability measure $\mu$ and a target probability measure $\pi$ of $\mathcal{P}$, the Stein-Chen method provides a way to estimate the distances of the type $\zeta_{\mathcal{F}}(\mu,\pi)$. More precisely, consider a Stein's operator $S$:
\begin{align*}
Sf(x)&=\a(x) f(x+1)-\b(x) f(x),&x\in \N&;& \b_0=0,
\end{align*}
characterizing the probability measure $\pi$ (meaning that $\int{Sf d\mu}=0$ for every function $f:\N\rightarrow\R$ in a sufficiently rich class of functions if and only if $\mu=\pi$) and the associated Stein equation
\begin{equation}
\label{equation_stein}
Sg_f=f-\int{fd\pi}.
\end{equation}
We call $g_f$ a solution to Stein's equation. The interest of such solutions comes from the following error bound:
\begin{align}
\zeta_{\mathcal{F}}(\mu,\pi)
&=\sup_{f \in \mathcal{F}} \left|\mu(f)-\pi(f) \right|= \sup_{f \in \mathcal{F}} \left| \int S g_f d\mu\right|.
\label{eq:error_bound}
\end{align}
As a consequence, if it can be shown that
$$
\left| \int S g_f d\mu \right| \leq \varepsilon_0 \| g_f\|_\infty + \varepsilon_1 \|\p g_f\|_\infty,
$$
then it follows that
$$
\zeta_{\mathcal{F}}(\mu,\pi) \leq \varepsilon_0 \sup_{f\in \mathcal{F}} \| g_f\|_\infty + \varepsilon_1 \sup_{f\in \mathcal{F}} \|\p g_f\|_\infty .
$$
This strategy of proof is widely used, for example in the references about Stein's method provided in the introduction.\\

A key point of this approach consists then in evaluating the so-called first and second Stein factors, also known as \textit{magic factors}:
\begin{align*}
\sup_{f\in \mathcal{F}}\| g_f\|_\infty,&& \sup_{f\in \mathcal{F}}\|\p g_f\|_\infty.
\end{align*}


Observe that the equation \eqref{equation_stein} does not determine the value of $g_f(0)$. When evaluating the first Stein factor $\sup_{f\in \F}{\|g_f\|_\infty}$, we pick for every $f\in \F$ the solution $g_f$ of \eqref{equation_stein} such that $g_f(0)=0$. Hence, it is sufficient to consider the quantity
\begin{align*}
\sup_{f\in \F}{\|g_f\|_{\infty,\N^*}}&=\sup_{f\in \F}{\|\Fw{g_f}\|_{\infty}}.
\end{align*}
Similarly, for the second Stein factor, picking solutions $g_f$ to \eqref{equation_stein} satisfying to $g_f(0)=g_f(1)$, i.e. $\p g_f(0)=0$, allows to consider only the quantity
\begin{align*}
\sup_{f\in \F}{\|\p g_f\|_{\infty,\N^*}}&=\sup_{f\in \F}{\|\p \Fw{g_f}\|_{\infty}}.
\end{align*}

To evaluate the above quantities, we use a method known as method of the generator and the semigroup representation deriving from it. Set $L$ the generator and $(P_t)_{t\geq 0}$ the semigroup associated to the BDP$(\a,\b)$ and assume that $(P_t)_{t\geq 0}$ is invariant with respect to the target probability distribution $\pi$. The operators $S$ and $L$ are linked by the relation
$$
L h = S(-\partial^* h).
$$
The Poisson equation reads as
\begin{equation*}
Lh_f=f-\mu(f),
\end{equation*}
the centered solution $h_f$ being given by the expression
\begin{align*}
h_f &= -\int_0^\infty (P_tf - \mu(f)) dt.
\end{align*}

Then, we obtain a solution $g_f$ to Stein's equation \eqref{equation_stein} under the so-called semigroup representation: 
\begin{align}
g_f &=-\p^* h_f=\int_0^\infty{\p^* P_t f dt}.
\label{eq:stein_sg_representation}
\end{align}

\subsection{Bounds on Stein's magic factors}
\label{sect:stein-gen}

In this section, theoretical bounds on the first and second order Stein factors are proposed for the approximation in total variation, Wasserstein and Kolmogorov distances. Proofs are postponed to Section \ref{sect:proof_stein} in order to clarify the presentation. Before turning to the results, a few general comments are made.

\begin{enumerate}
\item Our method evaluates Stein factors by quantities of the form
\begin{align*}
\int_{0}^\infty{e^{-\kappa t}\sup_{i \in \N}\P(\t{X}_t^i=i)dt},&&\int_{0}^\infty{e^{-\kappa t}\sup_{i \in \N^*}\left(\P(\t{X}_t^i=i)-\P(\t{X}_t^i=i-1)\right)dt}.
\end{align*}
The Markov process $(\t{X}_t)_{t\geq 0}$ which occurs is an alternative process and is not necessarily the same as the BDP$(\a,\b)$ with semigroup $(P_t)_{t\geq 0}$ appearing in the semigroup representation \eqref{eq:stein_sg_representation}. To our knowledge, this is new and makes the originality of our work.\\

\item While the detailed demonstrations of the forthcoming results are given in Section \ref{sect:proof_stein}, the scheme of proof is briefly explained here. Firstly, the argmax $f_i$ of the pointwise Stein factor
\begin{align*}
\sup_{f\in \F}{|\p^k g_f(i)|},&&i \in \N^*,\quad k \in \left\{0,1\right\},
\end{align*}
is obtained by resuming and generalizing results from \cite{brown2001stein}. Secondly, the function $f_i$ is plugged in the semigroup representation:
\begin{align*}
\p^k g_{f_i}(i)&=\int_0^\infty{\p^k \p^* P_t f_i\, dt}, &k\in \left\{0,1\right\}.
\end{align*}
The intertwining relations of Section \ref{sect:main} are then used to rewrite the term $\p^k \p^* P_t$.

This technique is already employed for Poisson approximation in some works, \cite{barbour1991pointprocess} and \cite{barbour2006stein} for example. In that context, the intertwining relation reads as:
\begin{align*}
\p P_t&=e^{-t}P_t\p,&&t\geq 0,
\end{align*}
where the semigroup $(P_t)_{t\geq 0}$ is the same on the left and on the right. The use of the intertwining relations permits to go beyond this case and to construct a universal method to derive Stein's factors.\\

\item For the sake of clarity, the present section only includes results on the uniform Stein factors. However, it can be seen in Section \ref{sect:proof_stein} that our upper bounds on the pointwise Stein factors are often sharp.\\

\item For the second order Stein factor, two sets of assumptions are used:
\begin{hypotheses}[Assumptions]\ 
\begin{itemize}
\item[$\mathbf{H_1}$:] The potential $V_1$ is non-increasing and non-negative, the potential $V_{1,*u}$ is bounded from below, and the sequence $u$ is bounded from below by a positive constant. In this case, we define $\sigma(1,*u)=\inf_\N V_{1,*u}$ and denote by $(X_{1,*u,t}^i)_{t \geq 0}$ the Markov process of generator $L_{1,*u}$ such that $X_{1,*u,0}^i=i$.
\item[$\mathbf{H_2}$:] The potential $V_1$ is a non-negative constant and the potential $V_{1,u}$ is bounded from below. In this case, set $\sigma(1,u)=\inf_\N V_{1,u}$ and call $(X_{1,u,t}^i)_{t \geq 0}$ the birth-death process of generator $L_{1,u}$ such that $X_{1,u,0}^i=i$.
\end{itemize}
\end{hypotheses}
This comes from the fact that the double intertwining relation is given by the main result Theorem \ref{theorem_intertwining_order2_hard} under $\mathbf{H_1}$ and by its analogous Theorem \ref{theorem_intertwining_order2_easy} under $\mathbf{H_2}$.

\item Stein's factors related to the different distances compare between each other through the inequalities:
\begin{align*}
\sup_{f=\1_{[0,m]},\,m\in\N}\|\p^k g_f\|_\infty &\leq \sup_{0\leq f\leq 1}\|\p^k g_f\|_\infty \leq \frac{1}{\inf_\N u}\sup_{f\in \text{Lip}(d_u)}\|\p^k g_f\|_\infty,& k\in \N. 
\end{align*}

\end{enumerate}

We now state the main results of this section, formulated for each distance of interest.\\

\textbf{Approximation in total variation distance.}

\begin{theorem}[First Stein's factor for bounded functions]
\label{thm:stein_factor_tv_order1}
Assume that $V_u$ is bounded from below by some positive constant $\sigma(u)$. Then, we have:
\begin{eqnarray*}
\sup_{0\leq f\leq 1}{\|g_f\|_\infty}\leq \int_0^\infty{e^{-\sigma(u)t}\sup_{i\in \N}\P(X_{u,t}^i=i)dt}.
\end{eqnarray*}
\end{theorem}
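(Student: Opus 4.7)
The plan is to combine the semigroup representation of $g_f$ with the first-order intertwining (Theorem \ref{theorem_intertwining_order1}), reduce to the extremal indicator function via a Brown-Xia-type argmax argument, and then apply a pointwise Feynman-Kac estimate.

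First I would fix $i \in \N^*$ (noting that $g_f(0)=0$ by the chosen normalization). Combining the semigroup representation \eqref{eq:stein_sg_representation} with the identity $\p^*\phi(i) = -\p\phi(i-1)$ for $i \geq 1$, writing $\p = u\,\p_u$, and invoking the first-order intertwining -- which applies since $V_u$ is bounded from below by $\sigma(u)>0$ -- converts the semigroup representation into
$$g_f(i) = -u(i-1)\int_0^\infty \E\!\left[\p_u f(X_{u,t}^{i-1})\, e^{-\int_0^t V_u(X_{u,s}^{i-1})\,ds}\right]\,dt.$$

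Next I would invoke the argmax lemma extending \cite[Proposition 2.2]{brown2001stein}: for each $i \in \N^*$, the supremum $\sup_{0\leq f\leq 1}|g_f(i)|$ is attained at $f_i = \1_{\{0,\ldots,i-1\}}$. I expect this to be the main obstacle of the proof. Since $g_f(i)$ is linear in $f$, the claim amounts to showing that the Green-like kernel $h(x) = \int_0^\infty(P_t(i-1,x)-P_t(i,x))\,dt$ is positive exactly on $\{0,\ldots,i-1\}$; this should follow from a monotone coupling of the BDP started at $i-1$ and $i$ together with a sign-unimodality property for $h$, reflecting the intuition that starting one step lower concentrates Green mass on the lower states.

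Granted the lemma, the conclusion is a direct computation: $\p f_i = -\1_{\{i-1\}}$, hence $\p_u f_i = -u(i-1)^{-1}\1_{\{i-1\}}$, so the display above collapses to the manifestly non-negative expression
$$g_{f_i}(i) = \int_0^\infty \E\!\left[\1_{\{X_{u,t}^{i-1} = i-1\}}\, e^{-\int_0^t V_u(X_{u,s}^{i-1})\,ds}\right]\,dt.$$
The hypothesis $V_u \geq \sigma(u)$ bounds the Feynman-Kac weight by $e^{-\sigma(u)t}$, giving $g_{f_i}(i) \leq \int_0^\infty e^{-\sigma(u)t}\P(X_{u,t}^{i-1}=i-1)\,dt$. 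Majorizing the integrand by the uniform supremum over starting points $j\in\N$ and then taking the supremum over $i \in \N^*$ yields the announced bound.
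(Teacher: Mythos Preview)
Your outline is correct and matches the paper's approach almost exactly: semigroup representation, first-order intertwining, identification of the extremal test function $\1_{[0,i-1]}$, then the trivial Feynman--Kac bound $e^{-\int_0^t V_u}\leq e^{-\sigma(u)t}$.

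The one place where you diverge is the argmax step, which you flag as ``the main obstacle'' and propose to handle by a monotone-coupling / sign-unimodality argument for the Green kernel $h(x)=\int_0^\infty(P_t(i-1,x)-P_t(i,x))\,dt$. In the paper this is not an obstacle at all: it is a two-line consequence of the explicit Brown--Xia formula (Lemma~\ref{lemma:bx1}), which gives
\[
g_f(i)\;=\;e_i^-\sum_{j\leq i-1}\pi(j)f(j)\;-\;e_{i-1}^+\sum_{j\geq i}\pi(j)f(j),
\]
so for $0\leq f\leq 1$ the maximum is visibly attained at $f=\1_{[0,i-1]}$ (no monotonicity of $e^\pm$, no coupling, no hypothesis on $V_1$ needed). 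Your proposed route via coupling would recover only that the \emph{cumulative} Green mass is ordered; getting the pointwise sign pattern of $h$ from that requires extra structure, whereas the explicit formula gives it for free. So the argument is sound, but you are working harder than necessary on precisely the easy step.
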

This theorem is applied to the negative binomial approximation in Proposition \ref{thm:neg_bin_bdd}.

\begin{theorem}[Second Stein's factor for bounded functions I]
\label{thm:stein_factor_tv_order2}
Under $\mathbf{H_1}$, 
\begin{eqnarray*}
\sup_{0\leq f \leq 1}\|\p g_f\|_\infty &\leq & 2 \int_0^\infty{e^{-\sigma(1,*u)t}\sup_{i \in \N^*}\P(X_{1,*u,t}^i=i)dt}.
\end{eqnarray*}
If the sequence is chosen to be $u=1$, we have:
\begin{eqnarray*}
\sup_{0\leq f \leq 1}\|\p g_f\|_\infty &\leq &  \int_0^\infty e^{-\sigma(1,*u)t}\sup_{i \in \N^*}\big( \P(X_{1,*u,t}^i=i)-\P(X_{1,*u,t}^i=i-1)\\
&&+\P(X_{1,*u,t}^i=i)-\P(X_{1,*u,t}^i=i+1)\big)\, dt.
\end{eqnarray*}
\end{theorem}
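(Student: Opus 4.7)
The plan is to combine the semigroup representation \eqref{eq:stein_sg_representation} of the Stein solution with the second-order intertwining of Theorem \ref{theorem_intertwining_order2_hard}. Normalising the Stein solution so that $\p g_f(0) = 0$, one has $g_f = \int_0^\infty \p^* P_t f\, dt$, and since $\p\p^* = \p^*\p$ on $\N^*$,
\[
\p g_f(i) \,=\, \int_0^\infty \p^* \p P_t f(i)\, dt, \qquad i \in \N^*.
\]
Under $\mathbf{H_1}$, I would then apply Theorem \ref{theorem_intertwining_order2_hard} with first weight $1$ and second weight $u$, which rewrites this as
\[
\p g_f(i) \,=\, \int_0^\infty u(i)\, \E\!\left[\p_u^* \p f(X_{1,*u,t}^i)\, e^{-\int_0^t V_{1,*u}(X_{1,*u,s}^i)\,ds}\right] dt,
\]
so that the second derivative of $g_f$ is now expressed entirely through the modified (non-birth--death) Markov process $(X_{1,*u,t})_{t\geq 0}$ and the Feynman--Kac weight.

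For the general bound (with factor $2$), I would use the pointwise estimate $|\p_u^* \p f(k)| \leq 2/u(k)$, valid whenever $0 \leq f \leq 1$, combined with the Feynman--Kac inequality $e^{-\int V_{1,*u}} \leq e^{-\sigma(1,*u)t}$. The task then reduces to the uniform control
\[
u(i)\,\E\bigl[u(X_{1,*u,t}^i)^{-1}\bigr] \,=\, \sum_{k} \frac{u(i)}{u(k)}\,\P(X_{1,*u,t}^i=k) \,\leq\, \sup_{j\in\N^*}\P(X_{1,*u,t}^j=j),
\]
which I would derive via a reversibility / Cauchy--Schwarz argument with respect to an appropriate weighted measure on $\N$: the factors $u(i)/u(k)$ should play the role of a Radon--Nikodym derivative tied to the reversible measure of the modified semigroup, thereby cancelling the size bias and leaving only a return-probability supremum.

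For the refined bound in the case $u = 1$, I would follow the Brown--Xia strategy of pointwise optimisation and identify, for each fixed $i \in \N^*$, the argmax of $0 \leq f \leq 1 \mapsto |\p g_f(i)|$ as the singleton indicator $f_i = \1_{\{i\}}$. Since $\p^* \p f_i(k) = 2\1_{\{k=i\}} - \1_{\{k=i-1\}} - \1_{\{k=i+1\}}$, direct substitution into the Feynman--Kac expression, together with the Feynman--Kac contraction $e^{-\int V_{1,*1}} \leq e^{-\sigma(1,*1)t}$ (applied with a careful sign analysis of the positive and negative contributions, the latter being controlled by the non-negativity of $V_{1,*1}$ inherited from $\mathbf{H_1}$), yields exactly the integrand $e^{-\sigma(1,*1)t}\bigl(2\P(X_{1,*1,t}^i=i) - \P(X_{1,*1,t}^i=i-1) - \P(X_{1,*1,t}^i=i+1)\bigr)$, after which passing to the supremum in $i$ inside the time integral gives the claim.

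The hard part will be the rigorous identification of the pointwise argmax $f_i$. While Brown--Xia's analysis of the sign of the Green kernel $\int_0^\infty \p^* P_t(i,\cdot)\,dt$ handles the first Stein factor, the second-order version requires a sign analysis of $\p^* \p$ applied to the Feynman--Kac Green kernel $\int_0^\infty \E[\1_{\{X_{1,*1,t}^i=\cdot\}}\, e^{-\int V_{1,*1}}]\,dt$, and one must argue that concentration on a single point is indeed optimal. A secondary technical point, needed only for the coarser bound, is to justify the reversibility identity for the modified semigroup $(P_{1,*u,t})_{t\geq 0}$, which is not of birth--death type because of the long-range jumps carried by $\p_u^* V_1$, making the weighted-return duality argument non-routine.
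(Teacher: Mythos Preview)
Your route for the \emph{refined} bound ($u=1$) is essentially the paper's: identify the pointwise argmax $f_i=\1_{\{i\}}$, plug it into the semigroup representation, apply the second-order intertwining, and read off $\p^*\p f_i = 2\1_i - \1_{i-1} - \1_{i+1}$. The argmax step is simpler than you anticipate: the paper does \emph{not} analyse the sign of the Feynman--Kac Green kernel. It uses the Brown--Xia explicit formula for $\p g_{\1_j}(i)$ (Lemma~\ref{lemma:bx1}) together with the monotonicity of $(e_i^\pm)$ (Lemma~\ref{lemma:bx2}, which only needs $V_1\geq 0$), and observes that $\p g_{\1_j}(i)\geq 0$ iff $j=i$, so that $\p g_f(i)=\sum_j f(j)\p g_{\1_j}(i)\leq \p g_{\1_i}(i)$. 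No intertwining, no Feynman--Kac kernel enters here.

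Your route for the \emph{first} bound has a genuine gap. You bound $|\p_u^*\p f|\leq 2/u$ uniformly in $f$ and then propose the inequality
\[
u(i)\,\E\bigl[u(X_{1,*u,t}^i)^{-1}\bigr]=\sum_{k}\frac{u(i)}{u(k)}\,\P(X_{1,*u,t}^i=k)\;\leq\;\sup_{j\in\N^*}\P(X_{1,*u,t}^j=j),
\]
to be justified by reversibility. This inequality is false already for $u\equiv 1$: the left-hand side equals $1$ while the right-hand side is a single return probability, typically strictly less than $1$. No Cauchy--Schwarz or duality argument can rescue it, and the modified process $(X_{1,*u,t})_{t\geq 0}$ is not even reversible in general (it has long-range downward jumps). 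The paper avoids this entirely: it uses the argmax $f_i=\1_{\{i\}}$ for \emph{both} bounds, so that $u(i)\,\p_u^*\p f_i$ is supported on the three points $\{i-1,i,i+1\}$ with values $-u(i)/u(i-1),\,2,\,-u(i)/u(i+1)$. The factor $2$ in the first bound is then obtained simply by discarding the two negative terms and keeping only $2\,\P(X_{1,*u,t}^i=i)$.
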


The analogue of Theorem \ref{thm:stein_factor_tv_order2} under the alternative set of hypotheses reads as:
\begin{theorem}[Second Stein's factor for bounded functions II]
\label{thm:stein_factor_tv_order2_bis}
Under $\mathbf{H_2}$, 
\begin{eqnarray*}
\sup_{0\leq f \leq 1}\|\p g_f\|_\infty &\leq & 2 \int_0^\infty{e^{-\sigma(1,u)t}\sup_{i \in \N}\P(X_{1,u,t}^i=i)dt}.\\
\end{eqnarray*}
If the sequence is chosen to be $u=1$, we have:
\begin{eqnarray*}
\sup_{0\leq f \leq 1}\|\p g_f\|_\infty &\leq &  \int_0^\infty e^{-\sigma(1,u)t}\sup_{i \in \N}\big(\P(X_{1,u,t}^i=i)-\P(X_{1,u,t}^i=i-1)\\
&&+\P(X_{1,u,t}^i=i)-\P(X_{1,u,t}^i=i+1)\big)\,dt.
\end{eqnarray*}
\end{theorem}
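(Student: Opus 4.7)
The proof is the natural $\mathbf{H_2}$-counterpart of Theorem \ref{thm:stein_factor_tv_order2}, substituting the intertwining of Theorem \ref{theorem_intertwining_order2_easy} for that of Theorem \ref{theorem_intertwining_order2_hard}, and follows the three-step scheme of Section \ref{sect:stein-gen}: first, identify the pointwise argmax $f_i \in \mathcal{F}_{\mathrm{TV}}$ of $f \mapsto |\partial g_f(i)|$; second, plug $f_i$ into the semigroup representation $\partial g_{f_i}(i) = \int_0^\infty \partial \partial^* P_t f_i(i)\, dt$; third, use the second-order intertwining to rewrite $\partial \partial^* P_t$ via the Feynman-Kac semigroup $(P_{1, u, t}^{V_{1, u}})_{t \geq 0}$.

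For the first step, the linearity of $f \mapsto \partial g_f(i)$ combined with $\partial g_{\mathbf{1}}(i) = 0$ (which follows from the Stein equation together with the convention $g_f(0) = g_f(1)$ imposed for the second Stein factor) yields $\sup_{0 \leq f \leq 1} |\partial g_f(i)| = \tfrac{1}{2} \sum_j |c_{i, j}|$, where $\partial g_f(i) = \sum_j c_{i, j} f(j)$; the maximizer is an indicator function of the form $\mathbf{1}_{\{j : c_{i, j} > 0\}}$. The same convention forces $\partial g_f(0) = 0$, so only $i \geq 1$ needs to be considered. For such $i$, the algebraic identity $\partial \partial^* \varphi(i) = -\partial^2 \varphi(i - 1)$ allows one to rewrite
\[
\partial g_f(i) = -\int_0^\infty \partial^2 P_t f(i - 1)\, dt,
\]
and Theorem \ref{theorem_intertwining_order2_easy} applied with gradient $\partial_u \partial$ (admissible under $\mathbf{H_2}$) then produces $\partial^2 P_t f(i - 1) = u(i - 1)\, P_{1, u, t}^{V_{1, u}}(\partial_u \partial f)(i - 1)$, with the Feynman-Kac lower bound $V_{1, u} \geq \sigma(1, u)$ contributing the exponential factor $e^{-\sigma(1, u) t}$.

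To conclude, plug the indicator argmax $f_i$ into the resulting expression. The general bound with prefactor $2$ follows from the crude estimate $|\partial^2 f_i| \leq 2$ together with the localisation of $\partial^2 f_i$ on the endpoints of the interval defining $f_i$: after taking $\sup$ over $i$ one recovers the diagonal Feynman-Kac probability $\sup_i \P(X_{1, u, t}^i = i)$. The sharper bound in the case $u = 1$ is obtained by telescoping the second finite difference of the kernel $y \mapsto P_{1, u, t}^{V_{1, u}}(\mathbf{1}_{\{y\}})(i - 1)$ using the explicit indicator form of $f_i$, collapsing the sum into the combination $\P(X_{1, u, t}^i = i) - \P(X_{1, u, t}^i = i - 1) + \P(X_{1, u, t}^i = i) - \P(X_{1, u, t}^i = i + 1)$. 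The main obstacle is the careful sign-tracking required in the argmax analysis and the conversion of the resulting sup over endpoints into the diagonal sup $\sup_i \P(X_{1, u, t}^i = i)$ appearing in the statement; this step is expected to rely on a symmetry or reversibility property adapted to the intertwined process and its Feynman-Kac modification.
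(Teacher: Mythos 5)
Your proposal captures the correct high-level structure — identify the argmax over $\mathcal{F}_{\mathrm{TV}}$, plug it into the semigroup representation, apply the intertwining of Theorem~\ref{theorem_intertwining_order2_easy}, and bound the Feynman--Kac weight by $e^{-\sigma(1,u)t}$ — which is exactly the paper's route via Lemma~\ref{lemma:argmaxtv} and Lemma~\ref{lem:stein_factor_tv}. However, the details diverge in ways that leave real gaps.

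First, you write that $\p^2 f_i$ is ``localised on the endpoints of the interval defining $f_i$.'' For total variation the argmax is the \emph{single-point} indicator $f_{i+1}=\1_{i+1}$ (since $\p g_j(i+1)>0$ exactly for $j=i+1$ by Lemmas~\ref{lemma:bx1} and~\ref{lemma:bx2}), so $\p^2 f_{i+1}$ is supported at the three points $\{i-1,i,i+1\}$ with values $(1,-2,1)$, not at two endpoints. The ``endpoint'' localisation you describe is what happens for the Kolmogorov argmax $\1_{[0,m]}$ (Lemma~\ref{lemma_argmax_kolmo}); you seem to have conflated the two cases. Second, the factor $2$ does not come from the crude estimate $|\p^2 f_i|\leq 2$: combined with $\Vert P_{1,u,t}^{V_{1,u}}\Vert_\infty \leq e^{-\sigma(1,u)t}$, that would only give $2/\sigma(1,u)$, not the integral of a pointwise probability. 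The paper's argument is sharper: compute
\[
-\p_u\p\1_{i+1} = -\tfrac{1}{u(i-1)}\1_{i-1} + \tfrac{2}{u(i)}\1_{i} - \tfrac{1}{u(i+1)}\1_{i+1},
\]
apply $P_{1,u,t}^{V_{1,u}}$ and bound the Feynman--Kac weight, then \emph{drop the two negative contributions}; the surviving term is $\tfrac{2}{u(i)}\cdot u(i)\,\P(X_{1,u,t}^i=i) = 2\P(X_{1,u,t}^i=i)$. The $u=1$ bound is the same computation with nothing dropped, regrouped as the stated combination of first differences. Third, and most importantly, your anticipated need for ``a symmetry or reversibility property'' to align indices is a phantom. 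Using equation~(\ref{eq:sol_stein_1bis}), $\p g_f(i+1) = -u(i)\int_0^\infty \p_u\p P_t f(i)\,dt$, the intertwined process starts at $i$, the argmax is $\1_{i+1}$, and the dominant term is already $\P(X_{1,u,t}^i=i)$ with matching indices; taking $\sup_i$ is then immediate. The appearance of this concern signals that the argmax-to-bound step was not carried through concretely.
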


\begin{remark}[Alternative versions]
By the same techniques, it is possible to upper bound the quantities 
\begin{align*}
\sup_{0\leq f/u\leq 1} \|g_f\|_\infty,&&  \sup_{0\leq f/u\leq 1} \|\p_u g_f\|_\infty.
\end{align*}
It could be useful if one is interested in the approximation in $V$-norm (\cite{meyn1993tweedie}) rather than in total variation distance.
\end{remark}

\textbf{Approximation in Wasserstein distance.} 

\begin{theorem}[First Stein's factor for Lipschitz functions]
\label{thm:stein_factor_wasserstein_order1}
If $V_u$ is bounded from below by some positive constant $\sigma(u)$, then we have:
$$
\sup_{f\in \lip(d_u)} \Vert \Fw{g_f} /u \Vert_\infty  \leq \frac{1}{\sigma(u)}.
$$
Moreover, if $V_u$ is constant, then the preceding inequality is in fact an equality.
\end{theorem}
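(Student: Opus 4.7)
The plan is to combine the semigroup representation \eqref{eq:stein_sg_representation} of the Stein solution with the first-order intertwining of Theorem \ref{theorem_intertwining_order1}. Starting from $g_f(y) = \int_0^\infty \p^* P_t f(y)\, dt$ for $y \in \N^*$, I would observe that for $x \in \N$,
\begin{align*}
\Fw{g_f}(x) \;=\; g_f(x+1) \;=\; \int_0^\infty \big(P_t f(x) - P_t f(x+1)\big)\, dt \;=\; -\int_0^\infty \p P_t f(x)\, dt,
\end{align*}
and dividing by the positive weight $u(x)$ yields
\begin{align*}
\frac{\Fw{g_f}(x)}{u(x)} \;=\; -\int_0^\infty \p_u P_t f(x)\, dt.
\end{align*}

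Next I would invoke Theorem \ref{theorem_intertwining_order1} to rewrite $\p_u P_t f = P_{u,t}^{V_u}\p_u f$. Since $V_u\geq \sigma(u) > 0$, the standard Feynman-Kac estimate
\begin{align*}
\big|P_{u,t}^{V_u} h(x)\big| \;=\; \Big|\E\!\Big[h(X_{u,t}^x)\,e^{-\int_0^t V_u(X_{u,s}^x)ds}\Big]\Big| \;\leq\; e^{-\sigma(u) t}\|h\|_\infty
\end{align*}
applies to $h = \p_u f$, so that for every $f \in \lip(d_u)$, i.e.\ every $f$ with $\|\p_u f\|_\infty \leq 1$,
\begin{align*}
\left|\frac{\Fw{g_f}(x)}{u(x)}\right| \;\leq\; \int_0^\infty e^{-\sigma(u) t}\,\|\p_u f\|_\infty\, dt \;\leq\; \frac{1}{\sigma(u)},
\end{align*}
uniformly in $x\in\N$. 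This establishes the inequality. The only minor matter of care is that this computation tacitly assumes the integral defining $g_f$ converges, which is precisely guaranteed by the same exponential bound under $V_u \geq \sigma(u)>0$.

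To obtain equality when $V_u$ is constant equal to $\sigma(u)$, I would exhibit a Lipschitz function saturating the bound. When $V_u$ is constant, the Feynman-Kac semigroup factorises as $P_{u,t}^{V_u} = e^{-\sigma(u) t} P_{u,t}$, so the argument above gives the exact identity
\begin{align*}
\frac{\Fw{g_f}(x)}{u(x)} \;=\; -\int_0^\infty e^{-\sigma(u) t}\, P_{u,t}(\p_u f)(x)\, dt.
\end{align*}
Choosing $f(x) = -d_u(0,x) = -\sum_{k=0}^{x-1}u(k)$ (with $f(0)=0$) produces $\p_u f \equiv -1$, hence $f\in\lip(d_u)$, and $P_{u,t}(\p_u f) \equiv -1$. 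Substituting gives $\Fw{g_f}(x)/u(x) = \int_0^\infty e^{-\sigma(u) t}\, dt = 1/\sigma(u)$, so the inequality is tight. No real obstacle is expected here; the proof is essentially a two-line application of the first-order intertwining once the forward/backward gradient bookkeeping is set straight.
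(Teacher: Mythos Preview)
Your proof is correct and follows essentially the same route as the paper: rewrite $\Fw{g_f}/u$ via the semigroup representation \eqref{eq:sol_stein_0}, apply the first-order intertwining of Theorem \ref{theorem_intertwining_order1}, bound the Feynman-Kac semigroup by $e^{-\sigma(u)t}$, and for sharpness exhibit $f(x)=-\sum_{k=0}^{x-1}u(k)$ so that $\p_u f\equiv -1$. Your explicit remark on convergence of the integral is a small bonus the paper leaves implicit.
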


\begin{theorem}[Second Stein's factor for Lipschitz functions I]
\label{thm:stein_factor_wasserstein_order2}
Under $\mathbf{H_1}$, 
\begin{eqnarray*}
\sup_{f \in \text{Lip}(d_u)}\|\p_u g_f\|_\infty &\leq & \frac{1}{\sigma(1,*u)}\sup_{x \in \N^*}\left(1+\frac{u(x-1)}{u(x)}\right).
\end{eqnarray*}
If we assume that $u(x)=q^x$ on $\N$ with $q\geq 1$, then it stands that:
\begin{eqnarray*}
\sup_{f \in \text{Lip}(d_u)}\|\p_u g_f\|_\infty &\leq & \int_0^\infty e^{-\sigma(1,*u)t}\left(1-\frac{1}{q}+2\frac{1}{q}\sup_{i \in \N^*}\P(X_{1,*u,t}^i=i)\right)dt.
\end{eqnarray*}
\end{theorem}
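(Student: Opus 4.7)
The plan is to follow the general strategy of Section \ref{sect:stein-gen}. For each $f \in \text{Lip}(d_u)$, choose the solution $g_f$ of Stein's equation \eqref{equation_stein} normalized by $g_f(0) = g_f(1)$, so that $\p_u g_f(0) = 0$ and it suffices to bound $|\p_u g_f(i)|$ for $i \in \N^*$. Using the semigroup representation \eqref{eq:stein_sg_representation} together with the elementary identity $\p \p^* = \p^* \p$ valid on $\N^*$,
\[
\p_u g_f(i) = \int_0^\infty \p_u^* \p P_t f(i)\, dt, \qquad i \in \N^*.
\]

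Under $\mathbf{H_1}$, the hypotheses of Theorem \ref{theorem_intertwining_order2_hard} applied with inner weight $1$ and outer weight $u$ are met, yielding the second-order intertwining $\p_u^* \p P_t f = P_{1,*u,t}^{V_{1,*u}}(\p_u^* \p f)$; combined with $V_{1,*u} \geq \sigma(1,*u)$, this gives
\[
|\p_u^* \p P_t f(i)| \leq e^{-\sigma(1,*u) t}\, \E\!\left[\bigl|\p_u^* \p f(X_{1,*u,t}^i)\bigr|\right].
\]
Since $f \in \text{Lip}(d_u)$ means $|\p f(y)| \leq u(y)$ for every $y$, the triangle inequality gives $|\p_u^* \p f(y)| \leq 1 + u(y-1)/u(y)$ on $\N^*$ and $|\p_u^* \p f(0)| \leq 1$. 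Taking the uniform supremum in $y$ and integrating $e^{-\sigma(1,*u)t}\,dt$ over $[0,\infty)$ produces the first part of the theorem.

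For the sharper bound when $u(x) = q^x$, $q \geq 1$, the idea is to keep the absolute value \emph{outside} the Feynman-Kac expectation and exploit the signed structure of $\p_u^* \p f$. Writing $F = \p f/u$ with $\|F\|_\infty \leq 1$ and denoting $P^V_t := P_{1,*u,t}^{V_{1,*u}}$, linearity in $F$ (with the convention $F(-1) = 0$) yields
\[
\p_u^* \p P_t f(i) = \sum_{y \geq 0} F(y)\left( \frac{P^V_t(i, y+1)}{q} - P^V_t(i, y)\right).
\]
Taking the supremum over $\|F\|_\infty \leq 1$ and applying the decomposition $|a/q - b| \leq |a - b|/q + b(1 - 1/q)$ (valid for $a, b \geq 0$, $q \geq 1$) bounds this by
\[
\frac{1}{q} \sum_y |P^V_t(i, y+1) - P^V_t(i, y)| + \left(1 - \frac{1}{q}\right) \sum_y P^V_t(i, y).
\]
The second sum is at most $e^{-\sigma(1,*u) t}$. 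The first is the discrete total variation of the sub-probability $y \mapsto P^V_t(i, y)$, and the key input is that it is bounded by $2\, P^V_t(i, i) \leq 2\, e^{-\sigma(1,*u) t}\, \P(X_{1,*u,t}^i = i)$. Combining and passing to the supremum over $i$ produces the required integrand $e^{-\sigma(1,*u) t}\bigl(1 - 1/q + (2/q) \sup_j \P(X_{1,*u,t}^j = j)\bigr)$, whose integration in $t$ delivers the refined bound.

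The main obstacle is precisely this unimodality-type inequality $\sum_y |P^V_t(i, y+1) - P^V_t(i, y)| \leq 2\, P^V_t(i, i)$ for the one-dimensional law of the killed process $X^i_{1,*u,t}$ starting at $i$. For a pure reversible birth-death process, the transition kernel is log-concave around the starting point and the estimate is classical; here however the generator $L_{1,*u}$ features additional downward jumps to $\{0,\dots,x-2\}$ stemming from the non-constancy of $V_1$ (see the definition just before Theorem \ref{theorem_intertwining_order2_hard}), so the unimodality must be re-established by a structural analysis using the reversibility of the underlying BDP$(\a,\b)$ and the compatibility of these extra jumps with the potential $V_{1,*u}$.
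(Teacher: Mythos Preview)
Your argument for the first inequality is correct and essentially coincides with the paper's: both bound $|\p_u^*\p f|$ pointwise by $1+u(\cdot-1)/u(\cdot)$ for any $f\in\lip(d_u)$, then integrate the Feynman--Kac contraction.

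The second part, however, has a genuine gap. Your whole refinement hinges on the ``unimodality-type'' estimate
\[
\sum_{y\geq 0}\bigl|P^V_t(i,y+1)-P^V_t(i,y)\bigr|\;\leq\;2\,P^V_t(i,i),
\]
which you yourself flag as the ``main obstacle'' and do not prove. This inequality asserts that the (sub)probability $y\mapsto P^V_t(i,y)$ is unimodal \emph{with mode at the starting point $i$}, and that is simply false in general, already for plain birth--death processes. For the M/M/$\infty$ queue, for instance, Mehler's formula gives $X^i_t\stackrel{d}{=}X^0_t+B_t$ with $X^0_t\sim\mathcal{P}_{\lambda(1-e^{-t})}$, so as $t\to\infty$ the mode drifts towards $\lambda$, not $i$; the bound $2\,P^V_t(i,i)$ then fails. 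Adding long-range downward jumps (present in $L_{1,*u}$ when $V_1$ is non-constant) and the Feynman--Kac weighting does not help. So this route cannot be completed as stated.

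The paper sidesteps the issue entirely by first identifying, for each $i$, the explicit maximizer $\varphi_i=-d_u(i,\cdot)$ of the pointwise factor (Lemma~\ref{lemma:argmax}). One then computes $\psi_i:=\p_u^*\p\varphi_i$ directly: for $u(j)=q^j$,
\[
\psi_i(j)=\Bigl(1-\tfrac{1}{q}\Bigr)+\tfrac{2}{q}\,\1_{j=i}+2\Bigl(\tfrac{1}{q}-1\Bigr)\1_{j\geq i+1}\;\leq\;\Bigl(1-\tfrac{1}{q}\Bigr)+\tfrac{2}{q}\,\1_{j=i}.
\]
The right-hand side is nonnegative, so positivity of the Feynman--Kac semigroup and $V_{1,*u}\geq\sigma(1,*u)$ yield the claimed integrand immediately. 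The key point you are missing is this argmax step: working with the specific extremal function lets one \emph{drop} the negative piece of $\psi_i$ rather than having to control the full $\ell^1$-variation of the killed transition kernel.
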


An instance of Theorems \ref{thm:stein_factor_wasserstein_order1} and \ref{thm:stein_factor_wasserstein_order2} in the context of geometric approximation is given by Proposition \ref{thm:geometric_approximation_wasserstein}. The following theorem is the analogue of Theorem \ref{thm:stein_factor_wasserstein_order2} under the alternative set of hypotheses.
\begin{theorem}[Second Stein's factor for Lipschitz functions II]
\label{thm:stein_factor_wasserstein_order2_bis}
Under $\mathbf{H_2}$, 
\begin{eqnarray*}
\sup_{f \in \text{Lip}(d_u)}\|\frac{1}{u}\p\Fw{ g_f}\|_\infty &\leq & \frac{1}{\sigma(1,u)}\sup_{x \in \N}\left(1+\frac{u(x+1)}{u(x)}\right).
\end{eqnarray*}
If the sequence is chosen to be $u(x)=q^x$ on $\N$ with $q\geq 1$, then the following result holds:
\begin{eqnarray*}
\sup_{f \in \text{Lip}(d_u)}\|\frac{1}{u}\p\Fw{ g_f}\|_\infty &\leq & \int_0^\infty e^{-\sigma(1,u)t}\left(q-1+2\sup_{i \in \N}\P(X_{1,u,t}^i=i)\right)dt.\\
\end{eqnarray*}
\end{theorem}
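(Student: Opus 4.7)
My plan is to mirror the proof of Theorem~\ref{thm:stein_factor_wasserstein_order2}, invoking the alternative second-order intertwining (Theorem~\ref{theorem_intertwining_order2_easy}) in place of Theorem~\ref{theorem_intertwining_order2_hard}. Combining the semigroup representation $g_f = \int_0^\infty \p^* P_t f\,dt$ with the identity $\p^* h(y) = -\p h(y-1)$ (valid for $y\geq 1$), one obtains for every $x\in\N$
\begin{equation*}
\frac{1}{u(x)}\p g_f(x+1) \;=\; -\int_0^\infty \frac{1}{u(x)}\p\p P_t f(x)\,dt \;=\; -\int_0^\infty \p_u\p_1 P_t f(x)\,dt.
\end{equation*}
Under $\mathbf{H_2}$, Theorem~\ref{theorem_intertwining_order2_easy} applied with (in its notation) $u_{\mathrm{thm}}=1$ and $v_{\mathrm{thm}}=u$ rewrites the integrand as $P_{1,u,t}^{V_{1,u}}\!\left(\frac{1}{u}\p\p f\right)(x)$, and the Feynman--Kac estimate $\|P_{1,u,t}^{V_{1,u}}h\|_\infty \leq e^{-\sigma(1,u)t}\|h\|_\infty$ follows from $V_{1,u}\geq\sigma(1,u)$. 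The first (uniform) inequality is then immediate from the triangle bound $|\p\p f(y)/u(y)| \leq 1+u(y+1)/u(y)$ valid for $f\in\text{Lip}(d_u)$, followed by integration in $t$.

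For the refinement in the case $u(x)=q^x$, I set $\phi:=\p_u f$ (so $\|\phi\|_\infty \leq 1$) and exploit the algebraic identity
\begin{equation*}
\frac{1}{u(y)}\p\p f(y) \;=\; q\,\phi(y+1) - \phi(y) \;=\; (q-1)\,\Fw{\phi}(y) + \p\phi(y).
\end{equation*}
Applying $P_{1,u,t}^{V_{1,u}}$ linearly, the first piece is bounded uniformly by $(q-1)\,e^{-\sigma(1,u)t}$. To control $P_{1,u,t}^{V_{1,u}}(\p\phi)(x)$ pointwise in $x$, I follow the Brown--Xia extremal function recipe outlined at the beginning of Section~\ref{sect:stein-gen}: for each fixed $x$, the pointwise argmax in $\text{Lip}(d_u)$ of $f\mapsto |\p g_f(x+1)/u(x)|$ has discrete gradient of step-function form $\phi_x(y)= 2\mathbf{1}_{y>x}-1$, so that $\p\phi_x = 2\mathbf{1}_{\{x\}}$ is a Dirac mass located at $x$; this yields $P_{1,u,t}^{V_{1,u}}(\p\phi_x)(x) = 2\,P_{1,u,t}^{V_{1,u}}(\mathbf{1}_{\{x\}})(x) \leq 2\,e^{-\sigma(1,u)t}\,\P(X_{1,u,t}^x=x)$. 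Integrating in $t$ and taking $\sup_x$ produces the claimed refined bound.

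The hardest step, I expect, will be the rigorous identification of the Brown--Xia extremal function at every $x$---the one that pins the ``jump'' of $\phi$ precisely at the evaluation point. Without this identification, a purely summation-by-parts approach against the Feynman--Kac kernel $q_t(x,\cdot)$ combined with unimodality of birth-death transition kernels would only give $\sum_z|q_t(x,z-1)-q_t(x,z)| = 2\sup_z q_t(x,z)$, and further converting $\sup_{x,z}q_t(x,z)$ into the diagonal quantity $\sup_i q_t(i,i)$ featured in the statement would then require delicate reversibility arguments for the modified BDP $(X_{1,u,t})$.
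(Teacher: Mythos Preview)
Your proposal is correct and follows essentially the same route as the paper. The paper deduces Theorem~\ref{thm:stein_factor_wasserstein_order2_bis} from Lemma~\ref{theorem_stein_factor_wasserstein} (the pointwise bound via $\Psi_i=-\p_u\p\varphi_{i+1}$, obtained from Lemma~\ref{lemma:argmax} and Theorem~\ref{theorem_intertwining_order2_easy}) and then bounds $\Psi_i$: first by $\|\Psi_i\|_\infty\le\sup_j(1+u(j+1)/u(j))$, and in the geometric case by $\Psi_i(j)\le(q-1)+2\,\mathbf{1}_{j=i}$. Your decomposition $\p_u\p f=(q-1)\Fw{\phi}+\p\phi$ together with $\p\phi_x=2\,\mathbf{1}_{\{x\}}$ for the extremal function is exactly this last inequality in disguise, and your uniform bound for the first part is the same $\|\Psi_i\|_\infty$ estimate applied directly to arbitrary $f\in\lip(d_u)$ rather than only to the argmax.

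Two minor remarks. First, your anticipated ``hardest step'' is already handled: the argmax identification is precisely Lemma~\ref{lemma:argmax}, and your $\phi_x=2\mathbf{1}_{\cdot>x}-1$ is $-\p_u\varphi_{x+1}$ (the sign is immaterial by $g_{-f}=-g_f$). Second, in the refined bound be careful to first pass to the argmax and \emph{then} decompose; bounding the $(q-1)\Fw{\phi}$ piece over all $f$ and the $\p\phi$ piece only at the argmax would be a logically mixed argument, but since the argmax function itself satisfies $\|\phi_x\|_\infty=1$ and $\p\phi_x\ge 0$, bounding both pieces for that single function via the Feynman--Kac estimate is perfectly valid.
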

As an illustration of this theorem, we derive Proposition \ref{thm:neg_bin_lip} in the case of negative binomial approximation.\\

\textbf{Approximation in Kolmogorov distance.}

The first theorem indicates that the inequality
\begin{align*}
\sup_{1_{[0,m]},\,m\in \N}{\|g_f\|_\infty} &\leq \sup_{0\leq f\leq 1}{\|g_f\|_\infty}
\end{align*}
is actually an equality. This comes from the fact that the function achieving the argmax of the pointwise factor for bounded functions is actually of the form $f=1_{[0,m]}$. As a consequence, our upper bounds for the first Stein factor are identical for the approximation in total variation and Kolmogorov distances.

\begin{theorem}[First Stein's factor for indicator functions]
\label{thm:stein_factor_kolmo_order1}
If $V_u$ is bounded from below and $\inf_\N{V_u}=\sigma(u)$, it stands that:
\begin{align*}
\sup_{1_{[0,m]},\,m\in \N}{\|g_f\|_\infty} &=\sup_{0\leq f\leq 1}{\|g_f\|_\infty}\leq \int_0^\infty{e^{-\sigma(u)t}\sup_{i\in \N}\P(X_{u,t}^i=i)dt}.
\end{align*}
\end{theorem}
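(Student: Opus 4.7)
The approach is to reduce the theorem to Theorem~\ref{thm:stein_factor_tv_order1} by first establishing the equality, and then invoking that theorem for the upper bound. The core task is to show that for each fixed $i \in \N^*$, the pointwise supremum $\sup_{0 \le f \le 1}|g_f(i)|$ is attained by an indicator function of the form $\1_{[0,m]}$. Once this is done, swapping the two suprema yields the equality, and the stated inequality is a direct consequence of Theorem~\ref{thm:stein_factor_tv_order1}.

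First, I would derive the explicit formula for the Stein solution associated with the BDP$(\a,\b)$ with invariant measure $\pi$:
$$g_f(i) = \frac{1}{\a(i-1)\pi(i-1)}\sum_{k=0}^{i-1}\pi(k)\bigl(f(k)-\pi(f)\bigr),\qquad i \ge 1,$$
which follows either by direct recursion from \eqref{equation_stein} (noting that the recursion for $x\ge 1$ determines $g_f$ on $\N^*$ from the reversibility formula \eqref{eq:invariant}) or from the semigroup representation \eqref{eq:stein_sg_representation}. Rearranging gives $g_f(i) = \sum_{k \ge 0} f(k)\,\gamma_k(i)$ with
$$\gamma_k(i) = \frac{\pi(k)}{\a(i-1)\pi(i-1)}\bigl(\1_{\{k \le i-1\}} - \pi([0,i-1])\bigr),$$
which is nonnegative for $k \le i-1$ and nonpositive for $k \ge i$. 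Since $f \mapsto g_f(i)$ is linear and the coefficients $\gamma_k(i)$ exhibit this single sign change in $k$, the maximum of $g_f(i)$ over $\{0 \le f \le 1\}$ is attained at $f = \1_{[0,i-1]}$, while the minimum is attained at the complement $1 - \1_{[0,i-1]}$ and equals the opposite of the maximum. Therefore $\sup_{0 \le f \le 1}|g_f(i)| = g_{\1_{[0,i-1]}}(i)$.

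Swapping the two suprema yields
$$\sup_{0 \le f \le 1}\|g_f\|_\infty = \sup_{i \in \N^*}\sup_{0 \le f \le 1}|g_f(i)| = \sup_{i \in \N^*} g_{\1_{[0,i-1]}}(i) \;\le\; \sup_{m \in \N}\|g_{\1_{[0,m]}}\|_\infty,$$
and the reverse inequality is immediate since $\{\1_{[0,m]}\}_{m\in\N} \subset \{0 \le f \le 1\}$. Applying Theorem~\ref{thm:stein_factor_tv_order1} then gives the displayed upper bound.

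The only nontrivial step is the single-crossing property of $k \mapsto \gamma_k(i)$, but in the birth-death setting this is essentially automatic from the closed-form expression for $g_f$ via $\pi$; this is the observation used (and generalized) from Brown--Xia~\cite{brown2001stein}. Note that no appeal to the intertwining relations is needed for this reduction: Theorems~\ref{theorem_intertwining_order1}--\ref{theorem_intertwining_order2_easy} only enter through Theorem~\ref{thm:stein_factor_tv_order1}, which has already been invoked.
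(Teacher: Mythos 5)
Your proposal is correct and follows the same route as the paper: the key step is establishing that the pointwise argmax over $\{0\le f\le 1\}$ is attained at a half-line indicator (the paper does this in Lemma~\ref{lemma:argmaxtv} via the Brown--Xia formula of Lemma~\ref{lemma:bx1}, while you re-derive the same single-sign-change fact directly from the closed-form expression for $g_f$), after which both proofs conclude by swapping suprema and invoking Theorem~\ref{thm:stein_factor_tv_order1}.
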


The two following theorems deal with the second Stein factor under the two set of hypotheses.

\begin{theorem}[Second Stein's factor for indicator functions I]
\label{thm:stein_factor_kolmo_order2}
Under $\mathbf{H_1}$, 
\begin{eqnarray*}
\sup_{f=\1_{[0,m]},\,m\in \N} \|\p g_f\|_\infty &\leq & \int_0^\infty{e^{-\sigma(1,*u)t}\sup_{i\in \N} \P(X_{1,*u,t}^i=i)dt}.
\end{eqnarray*}
If the sequence is chosen to be $u=1$, we have:
\begin{eqnarray*}
\sup_{f=\1_{[0,m]},\,m\in \N} \|\p g_f\|_\infty &\leq &  \int_0^\infty e^{-\sigma(1,*u)t}\sup_{i \in \N^*}| \P(X_{1,*u,t}^i=i)-\P(X_{1,*u,t}^i=i-1)| dt.
\end{eqnarray*}
\end{theorem}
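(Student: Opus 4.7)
The plan is to combine the semigroup representation $g_f = \int_0^\infty \p^* P_t f \, dt$ (obtained by choosing the arbitrary constant so that $g_f(0)=0$), the second order intertwining relation from Theorem \ref{theorem_intertwining_order2_hard} applied with first weight $1$ and second weight $u$, and an argmax characterization for indicator functions in the spirit of \cite{brown2001stein}.

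First, fixing $i \in \N^*$, I would write $\p g_f(i) = \int_0^\infty \p \p^* P_t f(i)\, dt$, and then use the fact that $\p\p^*$ and $\p^*\p$ coincide on $\N^*$ to rewrite this as $\int_0^\infty \p^* \p P_t f(i)\, dt = u(i) \int_0^\infty \p_u^* \p P_t f(i)\, dt$. The assumptions of $\mathbf{H_1}$ are precisely those needed to apply Theorem \ref{theorem_intertwining_order2_hard} to the pair of weights $(1,u)$, yielding
\begin{align*}
\p g_f(i) &= u(i) \int_0^\infty \E\!\left[\p_u^* \p f(X_{1,*u,t}^i)\, e^{-\int_0^t V_{1,*u}(X_{1,*u,s}^i)\, ds}\right] dt.
\end{align*}

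Next, I would specialize to $f = \1_{[0,m]}$, for which a direct computation gives $\p_u^* \p f = \frac{1}{u(m)} \1_{\{m\}} - \frac{1}{u(m+1)} \1_{\{m+1\}}$, so the integrand above becomes a signed combination of two Feynman-Kac weighted hitting probabilities. The key step is then to adapt the pointwise argmax argument of \cite{brown2001stein} to show that the supremum over $m \in \N$ of $|\p g_{\1_{[0,m]}}(i)|$ is attained for $m \in \{i-1,i\}$, so that one of the two indicator masses is located precisely at the starting point $i$. Bounding the Feynman-Kac expectation via $|a-b| \leq \max(|a|,|b|)$ together with $V_{1,*u} \geq \sigma(1,*u)$, the factor $u(i)$ coming from the intertwining cancels against the $u(i)$ appearing in the denominator of $\p_u^* \p \1_{[0,m]}$ at the position $i$, producing the pointwise estimate $\int_0^\infty e^{-\sigma(1,*u) t}\, \P(X_{1,*u,t}^i = i)\, dt$; taking the supremum over $i$ then gives the claimed bound. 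The boundary case $i=0$ is treated separately by writing $\p g_f(0) = g_f(1) = -\int_0^\infty \p P_t f(0)\, dt$ and appealing to the first order intertwining of Theorem \ref{theorem_intertwining_order1}, checking that the resulting bound is dominated by the one already obtained for $i \geq 1$.

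For the specialization $u=1$, the identity $\p_u^* = \p^*$ permits one to keep both indicator masses inside a single expectation of the form $\E\!\left[(\1_{\{X_t = i-1\}} - \1_{\{X_t = i\}})\, e^{-\int_0^t V_{1,*1}(X_s)\, ds}\right]$ when the argmax is taken at $m = i-1$, yielding the refined bound involving $|\P(X^i_t = i) - \P(X^i_t = i-1)|$ rather than their sum. The main obstacle of the whole proof is the argmax identification: transcribing and generalizing the combinatorial arguments of \cite{brown2001stein} to the present weighted setting, and verifying that the maximum is attained at an index adjacent to $i$ so that the cancellation between $u(i)$ and the weights of $\p_u^* \p \1_{[0,m]}$ takes place. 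Without this cancellation, one would only reach a bound of the form $\int_0^\infty e^{-\sigma(1,*u) t}\, u(i)\sup_j \frac{\P(X^i_t = j)}{u(j)}\, dt$, which is strictly weaker than the statement.
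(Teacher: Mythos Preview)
Your proposal is correct and follows essentially the same route as the paper: argmax identification at $m\in\{i-1,i\}$ (the paper proves this as a separate lemma from the formulas of \cite{brown2001stein}), plug into the representation $\p g_f(i)=u(i)\int_0^\infty \p_u^*\p P_t f(i)\,dt$, apply Theorem~\ref{theorem_intertwining_order2_hard}, and use the resulting sign structure to bound by $\P(X_{1,*u,t}^i=i)$. Two small corrections: the inequality ``$|a-b|\leq\max(|a|,|b|)$'' is false in general---what you actually use is that after the argmax the expression has the form $\P(X_t^i=i)-\frac{u(i)}{u(i\pm1)}\P(X_t^i=i\pm1)\leq \P(X_t^i=i)$ because the subtracted term is non-negative; and the paper disposes of $i=0$ not via a separate first-order argument but by the convention (stated in Section~\ref{sect:stein}) of choosing $g_f(0)=g_f(1)$ so that $\p g_f(0)=0$.
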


Comparing this second bound with the second bound obtained in Theorem \ref{thm:stein_factor_tv_order2} for the total variation approximation, one notices the fact that the second Stein factor for the total variation approximation involves the second derivative of the function $f_i(x)=\P(X_{1,*u,t}^i=x)$, whereas the second Stein factor for the Kolmogorov approximation involves only its first derivative.

Finally, we state the analogous of Theorem \ref{thm:stein_factor_kolmo_order2} under the alternative set of hypotheses.
\begin{theorem}[Second Stein's factor for indicator functions II]
\label{thm:stein_factor_kolmo_order2_bis}
Under $\mathbf{H_2}$, 
\begin{eqnarray*}
\sup_{f=\1_{[0,m]},\,m\in \N} \|\p g_f\|_\infty &\leq & \int_0^\infty{e^{-\sigma(1,u)t}\sup_{i\in \N} \P(X_{1,u,t}^i=i)dt}.
\end{eqnarray*}
If the sequence is chosen to be $u=1$, we have:
\begin{eqnarray*}
\sup_{0\leq f \leq 1}\|\p g_f\|_\infty &\leq &  \int_0^\infty{e^{-\sigma(1,u)t}\sup_{i \in \N}|\P(X_{1,u,t}^i=i)-\P(X_{1,u,t}^i=i-1)|dt}.
\end{eqnarray*}
\end{theorem}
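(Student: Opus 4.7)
The strategy directly parallels that of Theorem \ref{thm:stein_factor_kolmo_order2}, but under $\mathbf{H_2}$ one invokes the alternative version Theorem \ref{theorem_intertwining_order2_easy} in place of Theorem \ref{theorem_intertwining_order2_hard}. The starting point is the semigroup representation \eqref{eq:stein_sg_representation}, which upon taking a forward gradient yields
\begin{align*}
\p g_f(i) \;=\; \int_0^\infty \p\p^* P_t f(i) \, dt, \qquad i \in \N.
\end{align*}
A short computation using the convention $\p^* f(0) = -f(0)$ shows $\p\p^* f = -\p\p\,\bw{f}$ on all of $\N$, with $\bw{f}(0) = 0$. On $\N^*$ this rewrites $\p\p^* P_t f(i) = -\p\p P_t f(i-1) = -u(i-1)\,\p_u\p\, P_t f(i-1)$, while the case $i=0$ is handled separately exploiting the fact, noted after Theorem \ref{theorem_intertwining_order2_easy}, that under $\mathbf{H_2}$ the modified process $X_{1,u,\cdot}^{0}$ never leaves $\{0\}$.

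Applying Theorem \ref{theorem_intertwining_order2_easy} with its two weight sequences specialized to $1$ and $u$, one has $\p_u\p\, P_t = P_{1,u,t}^{V_{1,u}}\,\p_u\p$, and using $V_{1,u} \geq \sigma(1,u)$ one pulls an exponential factor out of the killing, obtaining the Feynman--Kac representation
\begin{align*}
|\p g_f(i)| \;\leq\; u(i-1) \int_0^\infty e^{-\sigma(1,u) t} \,\E\!\left[\, |\p_u\p f|\bigl(X_{1,u,t}^{i-1}\bigr) \right] dt, \qquad i \geq 1.
\end{align*}
For $f = \1_{[0,m]}$ the function $\p_u\p f$ is supported on $\{m-1,m\}$, with values $-1/u(m-1)$ and $1/u(m)$, so the integrand is a difference of two killed-probability terms weighted by $u$-ratios. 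A Brown--Xia-type argmax analysis, adapted from \cite{brown2001stein} to the modified process $X_{1,u,t}$, then identifies the extremal index $m_i^*$ in the pointwise factor $\sup_m|\p g_{\1_{[0,m]}}(i)|$: by a discrete monotonicity argument in $m$ the two contributions reinforce and, after the shift in the starting point, the $u$-ratio collapses to $1$, leaving the single probability $\P(X_{1,u,t}^i = i)$. Taking $\sup_i$ establishes the first announced bound.

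For the refinement when $u\equiv 1$, the weighted gradient reduces to $\p\p$, which on $\1_{[0,m]}$ takes opposite-signed values $-1$ and $+1$ at the \emph{consecutive} points $m-1$ and $m$. The corresponding Feynman--Kac expectation then telescopes into a difference of two neighbouring probabilities $\P(X_{1,u,t}^{i-1} = m_i^*) - \P(X_{1,u,t}^{i-1} = m_i^*-1)$; after the shift identifying $m_i^*$ with $i$, this produces the bound with $|\P(X_{1,u,t}^i = i) - \P(X_{1,u,t}^i = i-1)|$, and the passage from indicators to all bounded $f$ is standard via convex combinations and positive/negative parts.

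The main obstacle is the Brown--Xia argmax step: one must show that among indicator functions $\1_{[0,m]}$ the extremal $m$ at a given evaluation point $i$ is indeed the one stated, which requires a careful monotonicity analysis in $m$ of the two-term killed-probability expression. The only other technical points are the bookkeeping at the boundary $i=0$ (where the Dirichlet-type convention on $\p^*$ is used in tandem with the degeneracy of $X_{1,u,t}^0$ under $\mathbf{H_2}$) and the one-step shift induced by the identity $\p\p^* = -\p\p\,\bw{\cdot}$.
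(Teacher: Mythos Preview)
Your overall framework (semigroup representation, the identity $\p\p^*=-\p\p\,\bw{\cdot}$, second-order intertwining, then argmax) is correct in spirit, but you have reversed the order of the two key steps, and this is precisely what creates the difficulty you yourself flag as ``the main obstacle''. In the paper the argmax in $m$ is identified \emph{before} any intertwining, directly at the level of $g_f$, using the explicit Brown--Xia formulas of Lemmas~\ref{lemma:bx1} and~\ref{lemma:bx2}: one shows (Lemma~\ref{lemma_argmax_kolmo}) that
\[
\sup_{m}\bigl|\p g_{\1_{[0,m]}}(i)\bigr|=\max\bigl\{-\p g_{\1_{[0,i-1]}}(i),\ \p g_{\1_{[0,i]}}(i)\bigr\},
\]
using only the monotonicity of the sequences $e_i^\pm$ attached to the \emph{original} process and its invariant measure $\pi$. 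Only then is Theorem~\ref{theorem_intertwining_order2_easy} applied to these two specific indicator functions, producing two explicit differences of probabilities of the modified process (Lemma~\ref{thm:majo_stein_order1_kolmo}). Your proposal instead applies the intertwining first and then attempts a ``Brown--Xia-type argmax analysis adapted to the modified process $X_{1,u,t}$'' on the resulting Feynman--Kac expression; this would require monotonicity in $m$ of integrals of killed-process probabilities, and no such property is available.

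Two further points. First, there is an inconsistency in your displayed bound: you write $|\p_u\p f|$ inside the expectation, which would yield a \emph{sum} of two probabilities, yet immediately afterwards speak of ``a difference of two killed-probability terms''. The claim that ``the $u$-ratio collapses to $1$, leaving the single probability $\P(X_{1,u,t}^i=i)$'' is also not what happens: in Lemma~\ref{thm:majo_stein_order1_kolmo} the $u$-ratios remain, and the single-probability bound of the theorem is obtained simply by \emph{dropping} the nonpositive subtracted term. Second, for the $u=1$ refinement, the passage ``from indicators to all bounded $f$ via convex combinations'' is not standard here: the second Stein factor for bounded functions carries a genuinely different bound (compare Theorem~\ref{thm:stein_factor_tv_order2_bis}, which involves a three-term second difference rather than a two-term first difference), so the $\sup_{0\leq f\leq 1}$ in the statement should be read as $\sup_{f=\1_{[0,m]}}$.
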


\subsection{Stein's method and mixture of distributions}

\label{sect:mixture}

As another part of our work within the context of Stein's method, we present in the current section theoretical error bounds for the approximation of mixture of distributions. This section is independent from our study of Stein's factors contained in Section \ref{sect:stein-gen}. Results from both sections are combined in Section \ref{sect:exe} and applied to Poisson and geometric mixture approximation.

Let $\varphi$ be a non-negative function on $\N$ such that $\varphi(0)=0$. For $\lambda >0$, we denote by $\mathcal{I}_\varphi(\lambda)$ the probability distribution on $\N$ whose Stein's operator is
\begin{align*}
S_{\lambda} g(x) &=\lambda g(x+1) - \varphi(x)g(x), &x \in \N.
\end{align*}

By letting $\varphi$ vary, one finds back for $\mathcal{I}_\varphi(\lambda)$ every probability distribution supported on $\N$.
In particular, the choice $\varphi(x)=x$ gives the Poisson law and is studied in \cite{barbour1992poisson}. The choice $\varphi(x)=r+x$ and $\varphi(x)=1$ leads respectively to the binomial negative and geometric laws. A less classical example is $\varphi(x)=x^2$, for which $\mathcal{I}_\varphi(\lambda)$ is a distribution with pointwise probabilities $C_\lambda \lambda^x/(x!)^2$ for $x\in \N$ ($C_\lambda$ is the renormalizing constant).

The first theorem of this section reads as follows.

\begin{theorem}[Closeness of two $\mathcal{I}_\varphi(\lambda)$ distributions]
\label{th:bornesI}
Set $\lambda,\lambda'>0$. We have:
$$
d_\mathcal{F}(\mathcal{I}_\varphi(\lambda'),\mathcal{I}_\varphi(\lambda)) \leq |\lambda- \lambda'| \sup_{f\in \mathcal{F}} \Vert g_{\l,f}\Vert_\infty,
$$
where $g_f$ is the solution of the Stein's equation $S_\lambda g_f=f-\int{fd\mathcal{I}_\varphi(\lambda)}$. More generally, for any positive sequence $u$, if $X\sim \mathcal{I}_\varphi(\lambda)$ and  $X'\sim \mathcal{I}_\varphi(\lambda')$, then:
$$
d_\mathcal{F}(\mathcal{I}_\varphi(\lambda'),\mathcal{I}_\varphi(\lambda)) \leq |\lambda- \lambda'| \sup_{f\in \mathcal{F}} \Vert g_f/u\Vert_\infty \mathbb{E}[u(X'+1)].
$$
\end{theorem}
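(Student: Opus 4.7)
The plan is to exploit a classical cancellation trick: integrate the Stein equation for parameter $\lambda$ against the other distribution $\mathcal{I}_\varphi(\lambda')$, then subtract the Stein identity that characterizes $\mathcal{I}_\varphi(\lambda')$ itself. The generator terms $\varphi(x) g(x)$ cancel exactly, leaving a tractable residual proportional to $\lambda - \lambda'$.

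More precisely, fix $f \in \mathcal{F}$ and let $g_{\lambda,f}$ denote the solution to the Stein equation $S_\lambda g_{\lambda,f} = f - \mathcal{I}_\varphi(\lambda)(f)$. Let $X' \sim \mathcal{I}_\varphi(\lambda')$. The first step is to take expectation with respect to $X'$:
\begin{equation*}
\mathbb{E}\bigl[\lambda\, g_{\lambda,f}(X'+1) - \varphi(X')\, g_{\lambda,f}(X')\bigr] = \mathcal{I}_\varphi(\lambda')(f) - \mathcal{I}_\varphi(\lambda)(f).
\end{equation*}
The second step is to apply the Stein identity for $\mathcal{I}_\varphi(\lambda')$ to the test function $g_{\lambda,f}$, which yields
\begin{equation*}
\mathbb{E}\bigl[\lambda'\, g_{\lambda,f}(X'+1) - \varphi(X')\, g_{\lambda,f}(X')\bigr] = 0.
\end{equation*}
Subtracting these two relations, the $\varphi(X')g_{\lambda,f}(X')$ term vanishes and we obtain the clean identity
\begin{equation*}
\mathcal{I}_\varphi(\lambda')(f) - \mathcal{I}_\varphi(\lambda)(f) = (\lambda - \lambda')\, \mathbb{E}[g_{\lambda,f}(X'+1)].
\end{equation*}

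From here both bounds are immediate. For the first one, take absolute values and use $|\mathbb{E}[g_{\lambda,f}(X'+1)]| \leq \|g_{\lambda,f}\|_\infty$, then take supremum over $f \in \mathcal{F}$. For the generalized version with weight $u$, factor as
\begin{equation*}
|\mathbb{E}[g_{\lambda,f}(X'+1)]| = \bigl|\mathbb{E}\bigl[(g_{\lambda,f}/u)(X'+1)\cdot u(X'+1)\bigr]\bigr| \leq \|g_{\lambda,f}/u\|_\infty\, \mathbb{E}[u(X'+1)],
\end{equation*}
and again take supremum over $\mathcal{F}$.

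The only step requiring some care is the second one, namely justifying that $g_{\lambda,f}$ belongs to the class of test functions for which the Stein identity characterizing $\mathcal{I}_\varphi(\lambda')$ holds. This will be where integrability of $\varphi(X')\, g_{\lambda,f}(X')$ under $\mathcal{I}_\varphi(\lambda')$ must be checked. When the supremum $\sup_{f \in \mathcal{F}}\|g_{\lambda,f}\|_\infty$ (respectively $\sup_{f \in \mathcal{F}}\|g_{\lambda,f}/u\|_\infty \cdot \mathbb{E}[u(X'+1)]$) is infinite the bound is trivial, so one may assume boundedness of $g_{\lambda,f}$; then verifying the Stein identity reduces to a routine Abel-summation argument using the explicit expression of the pointwise probabilities of $\mathcal{I}_\varphi(\lambda')$ derived from the recursion $\lambda' \mathcal{I}_\varphi(\lambda')(x) = \varphi(x+1)\mathcal{I}_\varphi(\lambda')(x+1)$, combined with finiteness of $\mathbb{E}[\varphi(X')]$ (and of $\mathbb{E}[u(X'+1)]$ in the weighted case).
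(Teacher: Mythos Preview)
Your proof is correct and follows essentially the same approach as the paper: integrate the Stein equation for $\lambda$ against $\mathcal{I}_\varphi(\lambda')$, subtract the Stein identity $\E[S_{\lambda'} g_f(X')]=0$, and bound the residual $(\lambda-\lambda')\,\E[g_f(X'+1)]$ (with the $u$-weighting in the general case). Your additional remarks on justifying the integrability needed for the Stein identity at $\lambda'$ are a welcome bit of extra care that the paper leaves implicit.
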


\begin{proof}
By the usual Stein error bound \eqref{eq:error_bound}, 
$$d_\mathcal{F}(\mathcal{I}_\varphi(\lambda'),\mathcal{I}_\varphi(\lambda))=\sup_{f\in \mathcal{F}}{\left|\int{fd\mathcal{I}_\varphi(\lambda')}-\int{fd\mathcal{I}_\varphi(\lambda)}\right|}=\underset{\substack{f\in \mathcal{F}}}{\sup}\left|\E[S_\lambda g_f(X')]\right|,$$
where $X'\sim \mathcal{I}_\varphi(\lambda')$. We know that $\E[S_{\lambda'} g_f(X')]=0$; this yields:
\begin{align*}
|\E[S_\lambda g_f(X')] |&= |\E\left[ (\lambda - \lambda') g_f(X'+1)\right]| = |(\lambda - \lambda') \E\left[ u(X'+1) g_f(X'+1)/u(X'+1)\right]| \\
&\leq |\lambda - \lambda'| \Vert g_f/u\Vert_\infty \E\left[ u(X'+1)\right].
\end{align*} 
\end{proof}
Note that the right hand side of both inequalities stated in Theorem \ref{th:bornesI} is not symmetric in $(\l,\l')$ due to the dependence of $g_f$ on $\lambda$ and one can slightly improve it by taking the minimum over the symmetrized form. The first inequality corresponds to the constant sequence $u=1$.

Let $W$ be a mixture of law $\mathcal{I}_\varphi(\lambda)$; namely there exists a random variable $\Lambda$ on $\R^+$ such that 
$$
\mathcal{L}( W \ | \ \Lambda) = \mathcal{I}_\varphi(\Lambda).
$$ 
(Recall that $\L(W)$ denotes the distribution of the random variable $W$.) A consequence of Theorem \ref{th:bornesI} is the following corollary.
\begin{corollary}[Biased approximation of mixed $\mathcal{I}_\varphi(\lambda)$ laws]
With the preceding notation, we have:
$$d_\mathcal{F}(\mathcal{L}(W),\mathcal{I}_\varphi(\lambda)) \leq \mathbb{E}[|\lambda- \Lambda|] \sup_{f\in \mathcal{F}} \Vert g_f\Vert_\infty.$$

\end{corollary}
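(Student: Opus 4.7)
The plan is to reduce this to Theorem \ref{th:bornesI} by conditioning on the mixing variable $\Lambda$. The strategy has three steps and involves no real obstacle: it is essentially a routine combination of conditioning, the triangle inequality, and the already-established bound for the non-mixed case.

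First, I would unpack the definition of the metric:
\begin{align*}
d_\mathcal{F}(\mathcal{L}(W),\mathcal{I}_\varphi(\lambda))
&= \sup_{f\in\mathcal{F}}\left|\mathbb{E}[f(W)] - \int f\,d\mathcal{I}_\varphi(\lambda)\right|.
\end{align*}
Using the tower property together with the definition $\mathcal{L}(W\mid\Lambda)=\mathcal{I}_\varphi(\Lambda)$, I would rewrite
$\mathbb{E}[f(W)] = \mathbb{E}\bigl[\int f\,d\mathcal{I}_\varphi(\Lambda)\bigr]$,
so that the difference becomes
\begin{align*}
\mathbb{E}[f(W)] - \int f\,d\mathcal{I}_\varphi(\lambda)
&= \mathbb{E}\left[\int f\,d\mathcal{I}_\varphi(\Lambda) - \int f\,d\mathcal{I}_\varphi(\lambda)\right].
\end{align*}

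Next, by the triangle inequality (i.e.\ Jensen applied to $|\cdot|$), I would move the absolute value inside the expectation, and then apply the first inequality of Theorem \ref{th:bornesI} pointwise, with $\lambda'$ replaced by the random value $\Lambda(\omega)$. Crucially, the solution $g_{\lambda,f}$ appearing in Theorem \ref{th:bornesI} depends only on the fixed target parameter $\lambda$ and on $f$, not on $\Lambda$, so its sup norm factors out of the expectation:
\begin{align*}
\left|\mathbb{E}[f(W)] - \int f\,d\mathcal{I}_\varphi(\lambda)\right|
&\leq \mathbb{E}\!\left[\left|\int f\,d\mathcal{I}_\varphi(\Lambda) - \int f\,d\mathcal{I}_\varphi(\lambda)\right|\right]\\
&\leq \mathbb{E}[|\lambda-\Lambda|]\,\|g_{\lambda,f}\|_\infty.
\end{align*}

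Finally, taking the supremum over $f\in\mathcal{F}$ yields the claimed bound. The only subtle point worth mentioning, and which I would state explicitly, is the measurability in $\omega$ of $\int f\,d\mathcal{I}_\varphi(\Lambda(\omega))$, which follows from the fact that $\lambda\mapsto \mathcal{I}_\varphi(\lambda)$ is continuous in total variation (clear from the explicit formula for the pointwise probabilities determined by the recursion $\lambda\,\pi_\lambda(x+1)=\varphi(x+1)\pi_\lambda(x+1)$ coming from $S_\lambda$). No additional ingredient beyond Theorem \ref{th:bornesI} is needed, so this really is a one-line corollary dressed up with conditioning.
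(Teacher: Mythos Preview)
Your argument is correct and follows the same route as the paper's proof: condition on $\Lambda$, move the absolute value inside the expectation, and apply Theorem~\ref{th:bornesI} with $\lambda'=\Lambda$ (noting that $g_{\lambda,f}$ depends only on the fixed target $\lambda$). The paper compresses this into a single displayed line $d_\mathcal{F}(\mathcal{L}(W),\mathcal{I}_\varphi(\lambda)) \leq \mathbb{E}[d_\mathcal{F}(\mathcal{L}(W\mid\Lambda),\mathcal{I}_\varphi(\lambda))] \leq \mathbb{E}[|\lambda-\Lambda|]\sup_{f\in\mathcal{F}}\|g_f\|_\infty$, but the content is identical.
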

\begin{proof}
Indeed,
$$d_\mathcal{F}(\mathcal{L}(W),\mathcal{I}_\varphi(\lambda)) \leq \mathbb{E}[d_\mathcal{F}(\mathcal{L}(W | \Lambda),\mathcal{I}_\varphi(\lambda))]  \leq \mathbb{E}[|\lambda- \Lambda|] \sup_{f\in \mathcal{F}} \Vert g_f\Vert_\infty.$$
\end{proof}
However, one actually has the following better bound using the mixture property of $W$:

\begin{theorem}[Unbiased approximation of mixed $\mathcal{I}_\varphi(\lambda)$ distributions]
\label{th:mixture}
For every positive sequence $u$, letting $\lambda= E[\Lambda]$, we have:
\begin{align*}
d_\mathcal{F}(\mathcal{L}(W),\mathcal{I}_\varphi(\lambda)) 
&\leq \sup_{ f \in \mathcal{F}} \Vert \partial_u g_f \Vert_\infty\, \underset{f \in \lip(d_{\fw{u}})}{\sup\quad\quad} \Vert g_{f} \Vert_\infty   \, \textrm{Var}(\Lambda) .
\end{align*}
More generally, the following upper bound holds for all positive sequences $u,v$:
\begin{align*}
d_\mathcal{F}(\mathcal{L}(W),\mathcal{I}_\varphi(\lambda)) 
&\leq \sup_{ f \in \mathcal{F}} \Vert \partial_u g_f \Vert_\infty\, \underset{r\in \lip(d_{\overset{\rightarrow}{u}})}{\sup\quad\quad}\,\|g_r/v\|_\infty   \, \E[ |\lambda- \Lambda|^2 \mathbb{E}[v(W+1)\  | \ \Lambda]].
\end{align*}
\end{theorem}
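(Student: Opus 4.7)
The plan is to start from the standard Stein identity $d_\mathcal{F}(\mathcal{L}(W), \mathcal{I}_\varphi(\lambda)) = \sup_{f \in \mathcal{F}} |\E[S_\lambda g_f(W)]|$ and exploit the mixture structure to reduce the right-hand side to a quantity that is quadratic in $\Lambda - \lambda$. Conditionally on $\Lambda$, the law of $W$ is $\mathcal{I}_\varphi(\Lambda)$, hence $\E[S_\Lambda g_f(W) \mid \Lambda] = 0$, which rewrites as $\E[\varphi(W) g_f(W) \mid \Lambda] = \Lambda\, \E[g_f(W+1) \mid \Lambda]$. Substituting in the definition of $S_\lambda$ and taking expectations yields
\begin{align*}
\E[S_\lambda g_f(W)] = \E\bigl[(\lambda - \Lambda)\, g_f(W+1)\bigr].
\end{align*}

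Introducing $h_f(\lambda') := \int g_f(x+1)\, d\mathcal{I}_\varphi(\lambda')(x) = \E[g_f(W+1) \mid \Lambda = \lambda']$ and conditioning yields $\E[S_\lambda g_f(W)] = \E[(\lambda - \Lambda) h_f(\Lambda)]$. The decisive centering step then uses $\lambda = \E[\Lambda]$ to subtract the constant $h_f(\lambda)$ without changing the expectation:
\begin{align*}
\E[S_\lambda g_f(W)] = \E\bigl[(\lambda - \Lambda)(h_f(\Lambda) - h_f(\lambda))\bigr].
\end{align*}
This identity is the key, since it already exhibits a first $|\lambda - \Lambda|$ factor, and the remaining task is to produce the second one by controlling the oscillation of $h_f$.

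The oscillation $|h_f(\Lambda) - h_f(\lambda)|$ is controlled by invoking Theorem \ref{th:bornesI} for the shifted test function $\tilde g_f(x) := g_f(x+1)$. A direct summation shows that for $x \leq y$,
\begin{align*}
|\tilde g_f(x) - \tilde g_f(y)| \leq \|\p_u g_f\|_\infty \sum_{k=x}^{y-1} u(k+1) = \|\p_u g_f\|_\infty\, d_{\fw u}(x,y),
\end{align*}
so that $\tilde g_f / \|\p_u g_f\|_\infty \in \lip(d_{\fw u})$. Applying Theorem \ref{th:bornesI}, in its weighted form, to the class $\lip(d_{\fw u})$ conditionally on $\Lambda$, produces
\begin{align*}
|h_f(\Lambda) - h_f(\lambda)| \leq \|\p_u g_f\|_\infty\, |\lambda - \Lambda|\, \sup_{r \in \lip(d_{\fw u})} \|g_r/v\|_\infty\, \E[v(W+1) \mid \Lambda],
\end{align*}
and the unweighted case $v \equiv 1$ covers the first inequality.

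Plugging this back into the centered identity, taking absolute values and using the tower property yield both announced bounds, with $\textrm{Var}(\Lambda) = \E[(\lambda - \Lambda)^2]$ arising in the unweighted case from the product of the two $|\lambda - \Lambda|$ factors. The only real obstacle is the conceptual one of finding the correct representation of the error: once one recognises that $h_f(\Lambda) - h_f(\lambda)$ is precisely a quantity of the form addressed by Theorem \ref{th:bornesI}, and that the unit shift in the definition of $\tilde g_f$ forces the Lipschitz class $\lip(d_{\fw u})$ rather than $\lip(d_u)$, the rest is a routine combination of conditioning and the triangle inequality.
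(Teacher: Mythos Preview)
Your proof is correct and follows essentially the same route as the paper's: the same centering identity $\E[S_\lambda g_f(W)]=\E[(\lambda-\Lambda)(h_f(\Lambda)-h_f(\lambda))]$ obtained from $\lambda=\E[\Lambda]$, the same observation that $g_f(\cdot+1)\in\|\p_u g_f\|_\infty\,\lip(d_{\fw u})$, and the same application of Theorem~\ref{th:bornesI} to bound $|h_f(\Lambda)-h_f(\lambda)|$ by a second factor $|\lambda-\Lambda|$. The only cosmetic difference is that the paper phrases the intermediate step via Kantorovich--Rubinstein and an auxiliary $Z\sim\mathcal{I}_\varphi(\lambda)$ rather than your function $h_f$, but the content is identical.
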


\begin{proof}[Proof of Theorem \ref{th:mixture}]
For every real-valued function $g$ on $\N$, $\E[S_\Lambda(g)(W)|\Lambda]=0$. Hence, by taking $g=g_f$ the solution to Stein's equation associated with any fixed function $f:\N\rightarrow\R$, 
\begin{eqnarray*}
\E[S_\lambda g_f (W)]&=&\E[\E[(S_\lambda -S_\Lambda)g_f(W)|\Lambda]]=\E[(\lambda-\Lambda)g_f(W+1)]\\
&=&\E[(\lambda-\Lambda)(g_f(W+1)-g_f(Z+1))],
\end{eqnarray*}
where $Z\sim \mathcal{I}_\varphi(\lambda)$. For two random variables $Z, Z'$ on $\N$, by the Kantorovich-Rubinstein theorem recalled at the beginning of this section, 
\begin{eqnarray*}
\left|\E[g(Z'+1)-g(Z+1)]\right| & \leq &\|\p_u g\|_\infty W_{d_u}(\mathcal{L}(Z'+1),\mathcal{L}(Z+1))=\|\p_u g\|_\infty \inf \E[d_{u}(Z'+1,Z+1)]\\
&=&\|\p_u g\|_\infty W_{d_{\overset{\rightarrow}{u}}}(\mathcal{L}(Z'),\mathcal{L}(Z)),
\end{eqnarray*}
where the infimum is taken on the set of couplings with first marginal $\mathcal{L}(Z)$ and second marginal $\mathcal{L}(Z')$.
Now, by Theorem \ref{th:bornesI},
\begin{eqnarray*}
\E[S_\lambda g_f (W)]&\leq & \|\p_u g\|_\infty \E[|\lambda-\Lambda|\E[W_{d_{\overset{\rightarrow}{u}}}(\mathcal{L}(W),\mathcal{L}(Z))|\Lambda]\\
&\leq & \|\p_u g\|_\infty  \sup_{r\in \lip(d_{\overset{\rightarrow}{u}})}\|g_r/v\|_\infty \E[(\lambda-\Lambda)^2 \E[v(W+1)| \Lambda ]].
\end{eqnarray*}
As in Theorem \ref{th:bornesI}, the first inequality is an instance of the second one in the case $v=1$.
\end{proof}

\begin{remark}[Alternative bound via coupling]
\label{rq:mixture_unbiased_bis}
In the previous proof, we used Theorem \ref{th:bornesI} in order to bound $W_{d_{\overset{\rightarrow}{u}}}(\mathcal{I}_\varphi(\Lambda),  \mathcal{I}_\varphi(\lambda))$. It is also possible to bound this distance via another method (for instance a coupling argument) instead of using a bound on Stein's solution.
\end{remark}

\section{Examples}

\label{sect:exe}
In this section we illustrate our results on some examples. The classical examples of the M/M/$\infty$ and M/M/$1$ process come from the queueing theory. We also apply the results to the Galton-Watson process with immigration. Other explicit examples of birth-death processes for which a "good choice" of sequence $u$ is known are given in \cite[Table 9.1 p. 351]{chen2004markov} and in \cite{chen1996estimation}. For the sake of conciseness we defer the proof of Lemma \ref{lem:majo_ptw_poisson} about the pointwise probabilities of the M/M/$\infty$ queue to Section \ref{sect:proof_expl_mminfty}.

\subsection{The M/M/$\infty$ process and the Poisson approximation}
\label{sect:mminfty}

Let $(X_t)_{t\geq 0}$ be a BDP with constant birth death $\lambda$ and linear death rate $x\mapsto x$. Its invariant measure is the Poisson law $\mathcal{P}_\lambda$. Let us set $u=v=1$ on $\N$. By application of Theorem \ref{theorem_intertwining_order1} we find that $V_1=1$ and that $(P_{1,t})_{t\geq 0}=(P_{t})_{t\geq 0}$. Applying Theorem \ref{theorem_intertwining_order2_easy} (or re-applying Theorem \ref{theorem_intertwining_order1}) yields $V_{1,1}=2$ and $(P_{1,1,t})_{t\geq 0}=(P_{t})_{t\geq 0}$. By a straightforward induction, for all positive or bounded functions $f:\N\rightarrow\R$,
\begin{align}
\label{formula_intertwining_poisson}
\p^k P_t f&=e^{-kt}P_t  \p^k f, &t\geq 0,\quad k \in \N.
\end{align}

Combined with Theorem \ref{theorem_contraction_semigroup_zolotarev_distance} and Remark \ref{rq:zolo-gen}, the equation \eqref{formula_intertwining_poisson} implies the following contraction in Zolotarev-type distance: for all $\mu \in \mathcal{P}$,
\begin{align*}
\underset{\|\p^k f\|_\infty\leq 1}{\sup\quad\;}\left|\mu(P_t f)-\mathcal{P}_\lambda(f)\right| &\leq e^{-kt}\underset{\|\p^k f\|_\infty\leq 1}{\sup\quad\;}\left|\mu(f)-\mathcal{P}_\lambda(f)\right|, &k\in \N^*.
\end{align*}

Formula \eqref{formula_intertwining_poisson} is already known and often proved using Mehler's formula which reads, for any bounded function $f$, as: 
\begin{align}
\label{eq:mehler-poisson}
P_tf(x) &= \mathbb{E}[f(X^0_t + B_t)],& x\in \N,\quad t\geq 0,
\end{align}
where $(X^0_t)_{t\geq 0}$ is a M/M/$\infty$ process starting from $0$ and $B_t$ is an independent random variable distributed as a binomial random variable with parameters $(x,e^{-t})$. It is also known that $X^0_t$ is distributed as a Poisson distribution with parameter $\lambda(1-e^{-t})$ at every time $t\geq 0$.
Conversely, the proof of the formula (\ref{eq:mehler-poisson}) can be deduced from Theorem \ref{theorem_intertwining_order1} with similar (but simpler) arguments than those developed in Lemma \ref{lemma_mehler_negbin} below.\\

We now turn to the subject of Poisson approximation and the associated Stein factors. Let $g_f$ be the solution to Stein's equation (\ref{equation_stein}) with Stein's operator $Sf(x)=\lambda f(x+1)-xf(x)$. The target measure is the Poisson distribution $\mathcal{P}_\lambda$. The following lemma allows to estimate from above the pointwise probabilities of the process $(X_t)_{t\geq 0}$.

\begin{lemma}[Upper bounds of the instantaneous probabilites of the M/M/$\infty$ queue]
\label{lem:majo_ptw_mminfty}
Let $(X_t^x)_{t\geq 0}$ be a BDP$(\l,x)_{x\in \N}$. For all $x\in \N$ and $t\geq 0$,
\begin{align*}
\sup_{x \in \N}\,\P(X_t^x=x) & \leq 1 \wedge \frac{c}{\sqrt{\l (1-e^{-t})}}, &c&= \frac{1}{\sqrt{2e}},\\
\sup_{x \in \N^*}\,\left|\P(X_t^x=x)-\P(X_t^x=x-1)\right| & \leq 1 \wedge \frac{C}{\l (1-e^{-t})},&C&=\frac{1}{\sqrt{2\pi}}e^{\frac{1}{\sqrt{2}}} \leq 1.
\end{align*}
\end{lemma}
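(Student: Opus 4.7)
My plan is to use Mehler's formula \eqref{eq:mehler-poisson} to write $X_t^x \stackrel{d}{=} Z + B$, with $Z\sim\mathcal{P}_\mu$ of parameter $\mu=\lambda(1-e^{-t})$ and $B \sim \mathrm{Bin}(x,e^{-t})$ independent, and reduce both estimates to uniform pointwise bounds on the Poisson distribution. Indeed, conditioning on $B$ yields
\begin{align*}
\P(X_t^x = x) &= \sum_{j=0}^{x}\P(B=j)\,\P(Z=x-j) \;\leq\; \sup_{k\geq 0}\P(Z=k),\\
\P(X_t^x = x) - \P(X_t^x=x-1) &= \sum_{j=0}^{x}\P(B=j)\bigl[\P(Z=x-j)-\P(Z=x-j-1)\bigr],
\end{align*}
with the convention $\P(Z=-1)=0$; the second sum is bounded in absolute value by $\sup_{k\geq 0}|\P(Z=k)-\P(Z=k-1)|$. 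The trivial bounds by $1$ inside the $1\wedge(\cdots)$ are immediate since we deal with probabilities.

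For the first inequality I would establish $\sup_{k}\P(\mathcal{P}_\mu=k) \leq 1/\sqrt{2e\mu}$ for every $\mu > 0$. The mode of the Poisson law sits at $\lfloor\mu\rfloor$. When $\mu \leq 1$, the mode is $0$ with mass $e^{-\mu}$ and the inequality reduces to $2\mu\,e^{1-2\mu}\leq 1$, a one-variable calculus exercise whose global maximum equals $1$ at $\mu=1/2$. When $\mu \geq 1$, Stirling's inequality $n!\geq\sqrt{2\pi n}(n/e)^{n}$ together with the convexity bound $(1+\delta/n)^{n}\leq e^{\delta}$ (with $\delta=\mu-\lfloor\mu\rfloor$) gives $\P(\mathcal{P}_\mu=\lfloor\mu\rfloor)\leq 1/\sqrt{2\pi\lfloor\mu\rfloor}$; one then checks $1/\sqrt{2\pi\lfloor\mu\rfloor}\leq 1/\sqrt{2e\mu}$, which is immediate from $\mu/\lfloor\mu\rfloor\leq\pi/e$ whenever $\lfloor\mu\rfloor$ is large enough and can be handled by a finite number of explicit verifications for the remaining small integer values.

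For the second inequality I would use the identity $\P(Z=k)-\P(Z=k-1)=\P(Z=k)(\mu-k)/\mu$ valid for $k\geq 1$ (the boundary term $k=0$ is handled by $\mu e^{-\mu}\leq 1/e$), which reduces the claim to
\begin{equation*}
\sup_{k\geq 1}\,\P(\mathcal{P}_\mu=k)\,|\mu-k| \;\leq\; \frac{e^{1/\sqrt{2}}}{\sqrt{2\pi}}.
\end{equation*}
Setting $s=\mu-k$ and combining Stirling with the refined estimate $e^{-s}(1+s/k)^{k}\leq e^{-ks^{2}/(2\mu)}$ for $s\geq 0$ (and its symmetric counterpart for $k\geq\mu$), I would reduce matters to maximizing an explicit one-variable expression essentially of the form $|s|\,e^{-s^{2}/(2k)}/\sqrt{2\pi k}$, whose asymptotic supremum is $1/\sqrt{2\pi e}$ attained near $|s|=\sqrt{k}$. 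The stated constant $C=e^{1/\sqrt{2}}/\sqrt{2\pi}$ is precisely the value needed to absorb the residual Stirling correction terms uniformly in $\mu>0$, including the intermediate regime $\mu\sim 1$. The main obstacle throughout is this uniform calibration: the bare Stirling asymptotics yield weaker numerical constants, and producing the precise values announced in the lemma requires a careful monotonicity analysis of the auxiliary one-variable functions rather than a purely asymptotic argument.
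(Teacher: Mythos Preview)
Your proposal is correct and follows essentially the same route as the paper. Both arguments use Mehler's formula to reduce to uniform pointwise estimates on the Poisson law $\mathcal{P}_\mu$ with $\mu=\lambda(1-e^{-t})$, cite or reprove the classical bound $\sup_k\mathcal{P}_\mu(k)\leq 1/\sqrt{2e\mu}$ for the first inequality, and for the second reduce via the identity $\mathcal{P}_\mu(k)-\mathcal{P}_\mu(k-1)=\mathcal{P}_\mu(k)(\mu-k)/\mu$ to bounding $q(\mu,k)=\mathcal{P}_\mu(k)|\mu-k|$ by the constant $C=e^{1/\sqrt2}/\sqrt{2\pi}$ using Stirling; the paper carries out exactly this analysis, optimising in $\lambda$ at fixed $x$ (the critical points being $x=\lambda\pm\sqrt\lambda$, matching your heuristic $|s|\approx\sqrt{k}$) and treating the small-$x$ cases $x\in\{0,1,2,3\}$ by hand, which is precisely the ``careful monotonicity analysis'' you anticipate but do not spell out.
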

The first upper bound is very classical, it derives from Mehler's formula \eqref{eq:mehler-poisson} and an upper bound on the pointwise probabilitites of the Poisson distribution (\cite[Proposition A.2.7]{barbour1992poisson}). The second one is new and is proved in Section \ref{sect:proof_expl_mminfty}, since it is rather technical and can be omitted at first reading.\\

By applying Theorems \ref{thm:stein_factor_tv_order1}, \ref{thm:stein_factor_wasserstein_order1}, \ref{thm:stein_factor_wasserstein_order2_bis} jointly with the first bound of Lemma \ref{lem:majo_ptw_mminfty}, one finds back (and by the same techniques) the following upper bounds (\cite{barbour1991pointprocess}, \cite{barbour2006stein}):
\begin{align*}
 \sup_{0 \leq f\leq 1} \|g_f \|_\infty&\leq 1\wedge \sqrt{\frac{2}{\l e}},& \sup_{f \in \lip(d_1)} \|g_f \|_\infty&=1,  & \sup_{f \in \lip(d_1)} \|\p g_f \|_\infty &\leq 1 \wedge \frac{8}{3\sqrt{2e\lambda}}.
\end{align*}
Of course, one may want to derive other known Stein's factors for Poisson approximation by our techniques, as for instance the second Stein factor for approximation in the total variation distance with rate $1\wedge (1/\l)$ (\cite{barbour1992poisson}). However, when applying Theorem \ref{thm:stein_factor_tv_order2_bis} 
with the second bound of Lemma \ref{lem:majo_ptw_mminfty}, the non-integrability in $0$ of the term $1/(1-e^{-t})$ leads to sub-optimal results (namely, after some careful computations, we recover the known rate, up to a multiplicative factor $\log \l$).\\

Let us now combine the Stein bounds with our results on the mixture of distributions. If $\varphi(x)=x$ then $\mathcal{I}_\varphi (\lambda)=\mathcal{P}_\lambda$. In particular, Theorem \ref{th:bornesI} and the preceding bounds give
\begin{align*}
d_{\mathrm{TV}}(\mathcal{P}(\lambda), \mathcal{P}(\lambda')) &\leq \frac{1}{1 \wedge \sqrt{\lambda \vee \lambda'}}|\lambda - \lambda'|,&W(\mathcal{P}_\lambda, \mathcal{P}_{\lambda'}) &\leq |\lambda - \lambda'|.
\end{align*}

The first bound is (almost) the result of \cite[Theorem 1.C p. 12]{barbour1992poisson}. The second one is in fact an equality and can also be proved via a coupling approach (\cite{lindvall2002coupling}). Theorem \ref{th:mixture} yields
\begin{align*}
W(\mathcal{L}(W),\mathcal{I}_\varphi(\lambda)) &\leq \left(1 \wedge \frac{8}{3\sqrt{2e\lambda}} \right) \textrm{Var}(\Lambda) , &d_{\mathrm{TV}}(\mathcal{L}(W),\mathcal{I}_\varphi(\lambda))& \leq  \frac{1}{\lambda}    \textrm{Var}(\Lambda).
\end{align*}
 
While the second bound is exactly the same as in \cite[Theorem 1.C p. 12]{barbour1992poisson}, the bound in Wasserstein distance seems to be new. Let us see an instance of it. We denote by NB$(r,p)$ the negative binomial distribution of parameters $(r,p)$, i.e.,
\begin{align*}
\text{NB}(r,p)(x)&=\frac{\Gamma(r+x)}{\Gamma(r)x!}(1-p)^r p^x,&x\in \N,
\end{align*}
where $\Gamma$ denotes the usual $\Gamma$ function. The negative binomial law is a mixed Poisson distribution with $\Lambda$ distributed as a Gamma law with parameters $r$ and $\frac{1-p}{p}$. Consequently, we obtain
$$
W\left(\text{NB}(r,p),\mathcal{P}_{r(1-p)/p}\right) 
\leq \frac{8}{3\sqrt{2e}} \sqrt{\frac{r(1-p)}{p}}\frac{(1-p)}{p},
$$
which is the upper bound announced in the introduction.
A similar approximation in total variation distance holds. Although the convergence of the binomial negative distribution towards a Poisson law in the regime $p\to 1$, $r\to \infty$ and $r(1-p)/p\rightarrow c$ for a positive constant $c$ is a well-known fact, the preceding upper bound seems to be the first attempt to quantify this convergence. \\

\subsection{The GWI process and the negative binomial approximation}
\label{sec:GWI}

We consider the BDP with rates $\a(x)=p(r+x)$, $\b(x)=x$ on $\N$ with $r>0$ and $0<p<1$. The coefficient $pr$ can be interpreted as a rate of immigration, while the birth rate per capita is $p$ and the death rate per capita is $1$. Without the immigration procedure, this is a Galton-Watson process whose individuals have only one descendant (or simply a linear birth-death process).
The invariant measure of this process is the negative binomial distribution $\text{NB}(r,p)$ just defined. Remark that for the particular choice $r=1$ it is nothing else than the geometric law of parameter $p$. If $X$ is a $\text{NB}(r,p)$  random variable then $X+r$ follows the so-called Pascal distribution; it represents the number of successes in a sequence of independent and identically distributed Bernoulli trials (with parameter $p$) before $r$ failures when $r$ is a positive integer.

Let us take $u=v=1$ on $\N$. Theorem \ref{theorem_intertwining_order1} shows that:
\begin{align*}
\p P_t&=P_{1,t}^{V_1},&t\geq 0,
\end{align*}
where $(P_{1,t})_{t\geq 0}$ is a birth-death process with rates defined as
\begin{align*}
\a_1(x)&=p(r+1+x), & \b_1(x)&=x , & x\in \N.
\end{align*}
It is again a Galton-Watson process with immigration. The birth and death rates are unchanged and the immigration rate is increased by $p$. The potential $V_1$ is constant and takes the value $V_1=1-p$. By Theorem \ref{theorem_intertwining_order2_easy}, we find that $(P_t)_{t\geq 0}$ and $\p^2$ are intertwined via the Feynman-Kac semigroup composed of a birth-death semigroup with rates $(\a_{1,1},\b_{1,1})$ and of potential $V_{1,1}$, with:
\begin{align*}
\a_{1,1}(x)&=p(r+2+x), & \b_{1,1}(x)&=x, &  V_{1,1}(x)&=2(1-p), & x\in \N.
\end{align*}

Let us call $(P_{k,t})_{t\geq0}$ the semigroup associated to a BDP with rates $(p(r+k+x),x)$ on $\N$. By a straightforward induction, for all positive or bounded functions $f:\N\rightarrow\R$, the following intertwining relation holds:

\begin{align}
\label{formula_intertwining_negative_binomial}
\p^k P_tf &=e^{-(1-p)kt}P_{k,t}\, \p^k f, & t \geq 0, \quad k\in \N.
\end{align}

As indicated in Remark \ref{rq:zolo-gen}, the previous equality gives the following improvement of Theorem \ref{theorem_contraction_semigroup_zolotarev_distance}: for every $\mu \in \mathcal{P}$,
$$
\sup_{\Vert \partial^k f \Vert_\infty \leq 1} \left|\mu (P_t f) - \pi(f)\right| \leq e^{-(1-p)kt}\sup_{\Vert \partial^k f \Vert_\infty \leq 1} \left|\mu(f)-\pi(f)\right|.
$$
Another consequence of the formula (\ref{formula_intertwining_negative_binomial}) is the invariance of polynomials under the action of $(P_t)_{t\geq 0}$: if $Q$ is a polynomial of degree $k$, then for all $t\geq 0$, $\p^k P_t Q$ is constant, hence $P_t Q$ is still a polynomial of degree $k$. This property also holds for the M/M/$\infty$ process. \\

Intertwining relations can be seen in certain cases as consequences of Mehler-type formulas. Here, conversely, we are able to derive a Mehler-type formula from the first order intertwining relation. To our knowldedge, this formula is new, though another Mehler-type formula is proved in \cite{barbour2015stein}.
\begin{lemma}[A Mehler's formula for the Galton-Watson process with immigration]
\label{lemma_mehler_negbin}
Set $0<\nolinebreak p< \nolinebreak 1$, $s>0$ and $q=1-p$. For all $x\in \N$ let $(Y_t^x)_{t\geq 0}$ be a birth-death process starting at $x$ and with rates $(p(s+k),k)_{k\in \N}$. Let $W_t$ be a random variable following the Poisson distribution $\mathcal{P}(p(1-e^{-t}))$ and define the sequence $(w(k))_{k\in \N}$ as 
$$w(0)=1-e^{-qt}\P(W_t=0)\quad\text{ and }\quad\forall k \in \N^*,\quad w(k)=e^{-qt}(\P(W_t=k-1)-\P(W_t=k)).$$
For all $t\geq 0$, let the random variables $(Z_{i,t})_{i\in \N}$ be independent, identically distributed and independent of $Y_t^0$, with distribution given by the pointwise probabilities $(w(k))_{k\in \N}$. Then we have the equality in law
$$Y_t^x\,=\,Y_t^0+\sum_{i=1}^x{Z_{i,t}}.$$
\end{lemma}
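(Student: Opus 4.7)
The approach is to identify the probability generating function (PGF) of $Y_t^x$ by leveraging Theorem~\ref{theorem_intertwining_order1} in the same spirit as the proof of the Mehler formula~(\ref{eq:mehler-poisson}) for the M/M/$\infty$ case. Applied with the weight $u=1$, and using that $V_1=q$ is constant (cf.\ the discussion at the start of Section~\ref{sec:GWI}), the first order intertwining reads $\partial P_t f = e^{-qt} P_{1,t} \partial f$, where $(P_{1,t})_{t\geq 0}$ is the semigroup of the GWI with immigration parameter $s+1$. Testing this identity against the exponential function $f_\xi(y)=\xi^y$, $\xi\in[0,1)$, gives the forward-difference relation
\[
\E\!\left[\xi^{Y_t^{x+1}}\right]-\E\!\left[\xi^{Y_t^{x}}\right] \;=\; e^{-qt}(\xi-1)\,\E\!\left[\xi^{Y_{1,t}^{x}}\right].
\]

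In contrast with the M/M/$\infty$ case, where the two semigroups coincide and a multiplicative recursion follows at once, here the right-hand side involves the modified GWI, so the next step will be to relate $\E[\xi^{Y_{1,t}^x}]$ to $\E[\xi^{Y_t^x}]$ multiplicatively. The idea is to use the branching property of the GWI: with immigration parameter $s+1$, the process started from $x$ decomposes in law as the independent sum of $Y_t^x$ and an extra random variable $W_t$ corresponding to the additional unit-rate immigration started from $0$. This will give $\E[\xi^{Y_{1,t}^x}]=\E[\xi^{Y_t^x}]\,\E[\xi^{W_t}]$, and the distribution of $W_t$ stated in the lemma is then to be identified through a direct computation: $W_t$ is a compound Poisson sum of independent copies of the linear birth-death process, taken at uniformly distributed immigration epochs on $[0,t]$. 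Plugging this back into the forward-difference relation yields the multiplicative recursion $\E[\xi^{Y_t^{x+1}}] = \bigl(1+e^{-qt}(\xi-1)\E[\xi^{W_t}]\bigr)\,\E[\xi^{Y_t^{x}}]$, and induction on $x$ provides the factorization
\[
\E\!\left[\xi^{Y_t^{x}}\right] \;=\; \E\!\left[\xi^{Y_t^{0}}\right]\,\bigl(1+e^{-qt}(\xi-1)\,\E[\xi^{W_t}]\bigr)^{x}.
\]

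To conclude, Abel summation applied to the definition of $(w(k))_{k\in\N}$ readily gives $\sum_{k\geq 0} w(k)\,\xi^k = 1+e^{-qt}(\xi-1)\,\E[\xi^{W_t}]$, so the bracketed factor above is precisely the PGF of $Z_{1,t}$; hence $\E[\xi^{Y_t^x}]=\E[\xi^{Y_t^0}]\,\E[\xi^{Z_{1,t}}]^x$, which translates by the standard characterization property of PGFs on $\N$ into the announced equality in law $Y_t^x \stackrel{d}{=} Y_t^0+\sum_{i=1}^x Z_{i,t}$. The main obstacle is expected to be the identification of the PGF of $W_t$ with the one arising from the Poisson law $\mathcal{P}(p(1-e^{-t}))$ in the statement: this requires the explicit integration of the linear birth-death PGF over the immigration epochs and constitutes the additional difficulty compared with the M/M/$\infty$ situation.
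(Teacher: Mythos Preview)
Your approach mirrors the paper's: both start from the first-order intertwining $\partial P_t f = e^{-qt}P_{1,t}\partial f$ and then invoke a branching decomposition $\tilde{Y}_t^x \stackrel{d}{=} Y_t^x + W_t$ (independent) to turn the difference relation into a recursion in $x$. You specialize to $f=\xi^{\,\cdot}$ and recover $(w(k))$ via Abel summation, whereas the paper works with general bounded $f$ and reads off $(w(k))$ by rearranging the sum $\sum_k\P(W_t=k)\,\E[f(Y_t^x+k+1)-f(Y_t^x+k)]$; this is a cosmetic difference.

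The step you flag as ``the main obstacle'' is precisely where both arguments run into trouble. The additional immigration at rate $p$ in the GWI with parameter $s+1$ produces individuals that themselves reproduce at per-capita rate $p$ and die at rate $1$; hence the independent increment $W_t$ is the GWI with rates $(p(1+k),k)_{k\in\N}$ started from $0$ --- a negative binomial NB$(1,\theta_t(p))$ by Kendall's formula --- and \emph{not} the M/M/$\infty$ variable $\mathcal{P}(p(1-e^{-t}))$ appearing in the statement. The paper's proof asserts the latter (``$(W_t)$ is a birth-death process with rates $(p,k)$''), but one may check directly that $w(0)=1-e^{-qt-p(1-e^{-t})}$ differs from the extinction probability $\P(N_t=0)=(1-e^{-qt})/(1-pe^{-qt})$ of the linear birth-death process $N_t$, which by the branching property is the actual law of each $Z_{i,t}$. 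So the identification you anticipate cannot be carried out as stated; the Mehler-type decomposition is valid with the geometric $W_t$ in place of the Poisson one, and the weights $(w(k))$ in the lemma would need to be amended accordingly.
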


\begin{proof}
This proof is a corollary of the intertwining formula (\ref{formula_intertwining_negative_binomial}) for $k=1$. Indeed, for every bounded real-valued function on $\N$, Theorem \ref{theorem_intertwining_order1} implies that 
\begin{align*}
\E[f(Y_t^{x+1})]&=\E[f(Y_t^{x})]+e^{-qt}\E[f(\t{Y}_t^x+1)-f(\t{Y}_t^x)],& x\in \N, \quad t\geq 0,
\end{align*}
where $(\t{Y}_t^x)_{t\geq 0}$ is a BDP $(p(s+1+k),k)_{k\in \N}$. We notice that $(\t{Y}_t^x)_{t\geq 0}=({Y}_t^x+W_t)_{t\geq 0}$, where $(W_t)_{t\geq 0}$ is a birth-death process independent of $(Y_t^x)_{t\geq 0}$ with rates $(p,k)_{k\in \N}$ such that $W_0=0$. The process $(W_t)_{t\geq 0}$ is a M/M/$\infty$ queue starting from $0$ at time $0$. It is distributed as a Poisson law $\mathcal{P}_{\lambda_t}$, $\lambda_t=p(1-e^{-t})$ at all times $t\geq 0$. We use below the observation that as $\lambda_t<1$ for all $t\geq 0$, the sequence $(\P(W_t=k))_{k\in \N}$ is non-increasing on $\N$. We have:
\begin{eqnarray*}
\E[f(Y_t^{x+1})]&=&\E[f(Y_t^{x})]+e^{-qt}\sum_{k=0}^\infty{\P(W_t=k)\E[f(Y_t^x+k+1)-f(Y_t^x+k)]}\\
&=&(1-e^{-qt}\P(W_t=0))\E[f(Y_t^{x})]+e^{-qt}\sum_{k=1}^\infty{(\P(W_t=k-1)-\P(W_t=k))\E[f(Y_t^{x}+k)]}\\
&=&\sum_{k=0}^\infty{w(k)\E[f(Y_t^{x}+k)]},
\end{eqnarray*}
where the sequence $(w(k))_{k\in \N}$ is defined in the statement of the lemma. It is easy to check that $\sum_{k=0}^\infty{w(k)}=1$, and that the sequence $(w(k))_{k\in \N}$ is non-negative thanks to the observation above. For all $t\geq 0$, we define a random variable $S_t$ such that $S_t$ is independent of $(Y_t^x)_{t\geq0}$ and that for all non-negative integer $\P(S_t=k)=w(k)$. This yields the equality in law $Y_t^{x+1}=\nolinebreak Y_t^x+\nolinebreak S_t$. The lemma follows by induction.
\end{proof}

Let us turn to the study of Stein's factors associated to the negative binomial approximation. We recall a lemma from \cite{barbour2015stein} on the instantaneous probabilities of a Galton-Watson process with immigration, and give an outline of the proof for the sake of completeness. One could also use Lemma \ref{lemma_mehler_negbin} jointly with Lemma \ref{lem:majo_ptw_poisson} to upper-bound these instantaneous probabilities, but the majoration obtained by doing so does not reveal practical to use. 

\begin{lemma}[Upper bound of the instantaneous probabilities of a GWI process]
\label{lem:majo_ptw_gwi}
Set $(X_t^x)_{t \geq 0}$ be a BDP$(p(r+k),k)_{k \in \N}$. Then:
\begin{align*}
\sup_{x\in \N}{\P(X_t^x=x)}&\leq 1 \wedge \frac{1}{\sqrt{2e}} \left(\frac{1-p}{\,p\,(1-e^{-(1-p)t})}\right)^{1/2}\frac{K(r)}{\sqrt{r}},& t\geq 0,\quad  p\in (0,1),\quad  r>\frac{1}{2},
\end{align*}
with $K(r)=\sqrt{r}\Gamma(r-1/2)/\Gamma(r)$. 
\end{lemma}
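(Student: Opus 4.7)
The plan is to exploit the branching structure of the GWI process together with the negative binomial / mixed Poisson representation. The trivial inequality $\P(X_t^x=x)\leq 1$ handles the first term in the minimum, so from now on I focus on the second term.

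First, I would decompose $X_t^x = Y_t + Z_t^x$, where $Y_t$ is the GWI process starting from $0$ (accounting for immigrants and their descendants) and $Z_t^x$ is an independent subcritical linear birth-death process with rates $(pk,k)_{k\in\N}$ started from $x$. This independent superposition is standard and immediate from the branching mechanism. Independence and conditioning on $Z_t^x$ yield, for every $k\in\N$,
\begin{align*}
\P(X_t^x=k) \;\leq\; \sup_{m\in\N}\P(Y_t=m),
\end{align*}
so the supremum over $x$ (and even over $k$) is controlled by a single quantity that no longer depends on the starting point.

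Next, I would identify the law of $Y_t$ as $\mathrm{NB}(r,\tilde p_t)$ with $\tilde p_t = p(1-e^{-(1-p)t})/(1-pe^{-(1-p)t})$. This is obtained by solving the ODE $a'(t)=-p-(1-p)a(t)$, $a(0)=0$, that the ansatz $\E[s^{Y_t}]=(1+a(t)(s-1))^{-r}$ produces when plugged into the forward Kolmogorov PDE $\partial_t G=pr(s-1)G+(s-1)(ps-1)\partial_s G$. Then I would use the classical representation of a negative binomial law as a Poisson mixture: $Y_t\mid \Lambda_t \sim \mathrm{Poisson}(\Lambda_t)$ with $\Lambda_t\sim \Gamma(r,\beta_t)$, where the rate is
\begin{align*}
\beta_t = \frac{1-\tilde p_t}{\tilde p_t} = \frac{1-p}{p(1-e^{-(1-p)t})}.
\end{align*}

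At this point I would invoke the pointwise Poisson bound (the first inequality of Lemma \ref{lem:majo_ptw_mminfty}, or equivalently \cite[Prop.~A.2.7]{barbour1992poisson}): for $\lambda>0$, $\sup_k \P(\mathrm{Poisson}(\lambda)=k)\leq (2e\lambda)^{-1/2}$. Conditioning on $\Lambda_t$ and taking expectation gives
\begin{align*}
\sup_{m\in\N}\P(Y_t=m) \;\leq\; \E\!\left[\sup_m \P(\mathrm{Poisson}(\Lambda_t)=m\mid \Lambda_t)\right] \;\leq\; \frac{1}{\sqrt{2e}}\,\E[\Lambda_t^{-1/2}].
\end{align*}
The final step is a straightforward Gamma integral: for $r>1/2$,
\begin{align*}
\E[\Lambda_t^{-1/2}]=\frac{\beta_t^{r}}{\Gamma(r)}\int_0^\infty x^{r-3/2}e^{-\beta_t x}\,dx = \frac{\Gamma(r-1/2)}{\Gamma(r)}\sqrt{\beta_t},
\end{align*}
and substituting $\beta_t$ together with the identity $\Gamma(r-1/2)/\Gamma(r)=K(r)/\sqrt{r}$ yields the announced bound.

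The only mild obstacle is identifying $Y_t\sim \mathrm{NB}(r,\tilde p_t)$ explicitly; the rest of the argument is a transparent chain (independent decomposition, Poisson mixing, Gamma moment). The hypothesis $r>1/2$ enters precisely at the last step to ensure integrability of $\Lambda_t^{-1/2}$.
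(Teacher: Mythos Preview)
Your proof is correct and structurally identical to the paper's: reduce to the law of the process started at $0$, identify that law as $\mathrm{NB}(r,\theta_t)$, then bound the mode of a negative binomial. The paper reaches the same three milestones but by citation rather than derivation: it invokes its own Mehler-type decomposition (Lemma~\ref{lemma_mehler_negbin}) for the first step, Kendall (1948) for the second, and Phillips (1996) for the third. Your branching superposition $X_t^x=Y_t+Z_t^x$ is the classical alternative to the Mehler argument, and your Poisson--Gamma computation is precisely how the Phillips bound is obtained; so your version is more self-contained but not a different route.
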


\begin{proof}
By Lemma \ref{lemma_mehler_negbin},
\begin{align*}
\sup_{x \in \N}{\P(X_{t}^x=x)} &\leq  \sup_{x \in \N}{\P(X_{t}^0=x)},&t\geq 0.
\end{align*}
By a result of \cite{kendall1948modes}, cited as Lemma 2.2 in \cite{barbour2015stein}, it is known that for all $t\geq 0$, $X_{t}^0$ is distributed as a negative binomial distribution of parameters $(r, \theta_t(p))$, with 
$$\theta_t(p)=1-\frac{1-p}{1-pe^{-(1-p)t}}.$$ 
Now \cite{phillips1996stochastic} shows that when $X$ is distributed as a negative binomial distribution with parameters $(r,\theta)$, and if $r>\frac{1}{2}$, then 
$$\sup_{k\in \N}\P(X=k) \leq \frac{1}{\sqrt{2e}}\sqrt{\frac{1-\theta}{\theta}}\frac{K(r)}{\sqrt{r}},$$
which achieves the proof.
\end{proof}

For the Stein factor associated with Lipschitz function, Theorem \ref{thm:stein_factor_wasserstein_order1} and equation \eqref{formula_intertwining_negative_binomial} yield
$$\sup_{f \in \text{Lip}(d)}\| g_f\|_\infty=\frac{1}{\sigma(1)}=\frac{1}{1-p},$$
recovering \cite[Theorem 1.1, equation (1.3)]{barbour2015stein}.

The following proposition on the second Stein factor associated to Lipschitz function improves on the known upper bounds.

\begin{proposition}[Estimation of the second Stein's factor for Lipschitz function and NB-approximation]
\label{thm:neg_bin_lip}
Let $r>0$ and $0<p<1$.
For a real-valued function $f$ on $\N$, let $g_f$ be the (centered) solution to Stein's equation
$$p(r+x)\,\p g_f(x)\,+\,x\,\p^*g_f(x)=f(x)-\int{fd\text{NB}(r,p)}, \quad x\in \N.$$
Then,
\begin{eqnarray*}
\sup_{f \in \text{Lip}(d)}\|\p g_f\|_\infty &\leq&   \min\left\{\frac{1}{1-p},\frac{D}{\sqrt{(r+2)p(1-p)}}\right\},\quad \quad D=2\frac{\sqrt{\pi}}{3\sqrt{ e}}\simeq 0.72.
\end{eqnarray*}
\end{proposition}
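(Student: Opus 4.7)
The plan is to invoke the second-order Stein factor estimate of Theorem \ref{thm:stein_factor_wasserstein_order2_bis} applied to the BDP with rates $\alpha(x)=p(r+x)$, $\beta(x)=x$, choosing the weight $u\equiv 1$ (so $q=1$ in the theorem's notation). The intertwining \eqref{formula_intertwining_negative_binomial} shows that hypothesis $\mathbf{H_2}$ is satisfied with $V_1=1-p$ (a positive constant) and $V_{1,1}=2(1-p)$, so $\sigma(1,1)=2(1-p)>0$; moreover, the modified process $(X_{1,1,t}^i)_{t\geq 0}$ has rates $(p(r+2+k),k)_{k\in\N}$, that is, it is itself a Galton--Watson process with immigration of parameters $(r+2,p)$ in the sense of Section \ref{sec:GWI}.

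The first bound $1/(1-p)$ follows immediately from the general form of Theorem \ref{thm:stein_factor_wasserstein_order2_bis}, since $\sup_{x\in\N}(1+u(x+1)/u(x))=2$ when $u\equiv 1$, giving $\sup \|\p g_f\|_\infty \leq 2/\sigma(1,1)=1/(1-p)$. For the second bound I would use the integral form of the same theorem (the $q=1$ case),
\[ \sup_{f\in\lip(d)}\|\p g_f\|_\infty \leq 2\int_0^\infty e^{-2(1-p)t}\sup_{i\in\N}\P(X_{1,1,t}^i = i)\, dt, \]
majorize the inner supremum by Lemma \ref{lem:majo_ptw_gwi} applied with $r+2$ in place of $r$ (permitted since $r+2>1/2$), and reduce the resulting time integral to a Beta function via the substitution $s=e^{-(1-p)t}$, which yields $\int_0^1 s(1-s)^{-1/2}\,ds = B(2,1/2) = 4/3$.

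The main obstacle will be producing the announced constant $D=2\sqrt{\pi}/(3\sqrt{e})$: the procedure above delivers a bound of the shape $\tfrac{8}{3\sqrt{2e}}\,K(r+2)/\sqrt{(r+2)p(1-p)}$ with $K(r+2)=\sqrt{r+2}\,\Gamma(r+3/2)/\Gamma(r+2)$, and the Phillips inequality underlying Lemma \ref{lem:majo_ptw_gwi} is too loose to recover $D$ directly. To close the gap I would refine the pointwise-probability step by using Lemma \ref{lemma_mehler_negbin} to pass from $\sup_i \P(X_{1,1,t}^i=i)$ to $\sup_k\P(X_{1,1,t}^0=k)$, identifying $X_{1,1,t}^0$ as a $\mathrm{NB}(r+2,\theta_t(p))$ random variable with $\theta_t(p)=p(1-e^{-(1-p)t})/(1-pe^{-(1-p)t})$, and then applying a Stirling-based estimate at the mode of the negative binomial in place of the Phillips bound; combined with the elementary inequality $1-pe^{-(1-p)t}\geq 1-p$, the same Beta integral then produces the announced constant $D$, after which taking the minimum with $1/(1-p)$ concludes the proposition.
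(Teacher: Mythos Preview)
Your approach matches the paper's exactly up to and including the bound
\[
\sup_{f\in\lip(d)}\|\p g_f\|_\infty \;\leq\; \frac{8}{3\sqrt{2e}}\,\frac{K(r+2)}{\sqrt{(r+2)\,p(1-p)}},
\]
obtained from Theorem~\ref{thm:stein_factor_wasserstein_order2_bis} (with $u\equiv 1$), Lemma~\ref{lem:majo_ptw_gwi} applied to the parameter $r+2$, and the Beta integral you describe. Your assessment that this is ``too loose to recover $D$ directly'' is, however, where you part ways with the paper, and it is mistaken. The paper closes the argument with a single elementary observation that you overlooked: the function $K(s)=\sqrt{s}\,\Gamma(s-1/2)/\Gamma(s)$ is decreasing on $(1/2,\infty)$, so $K(r+2)\leq K(2)=\sqrt{2}\,\Gamma(3/2)$ for every $r>0$. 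Substituting this universal constant yields $D=8K(2)/(3\sqrt{2e})$, which the paper then identifies with $2\sqrt\pi/(3\sqrt e)$. No passage through Lemma~\ref{lemma_mehler_negbin}, no Stirling estimate at the mode, and no use of $1-pe^{-(1-p)t}\geq 1-p$ is required.

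Your proposed refinement is therefore an unnecessary detour; the paper's route is strictly shorter and uses only the monotonicity of $K$. (As an aside that may explain your impression: the paper's final arithmetic $8K(2)/(3\sqrt{2e})=4\Gamma(3/2)/(3\sqrt e)$ appears to drop a factor of~$2$, since $K(2)=\sqrt 2\,\Gamma(3/2)$ gives $8K(2)/(3\sqrt{2e})=8\Gamma(3/2)/(3\sqrt e)=4\sqrt\pi/(3\sqrt e)$. If you carried out the computation and found $4\sqrt\pi/(3\sqrt e)\approx 1.43$ rather than the stated $0.72$, that discrepancy is a slip in the paper's constant, not a defect of the Phillips-based method.)
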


\begin{proof}
By application of Theorem \ref{theorem_stein_factor_wasserstein} and formula \eqref{formula_intertwining_negative_binomial}, we find that 
$$\sup_{f \in \text{Lip}(d)}\|\p g_f\|_\infty =  2\int_{0}^\infty{e^{-2(1-p)t}\,\sup_{i \in \N}\,{\P(X_{1,1,t}^i=i)}dt},$$ 
where $(X_{1,1,t}^i)_{t\geq0}$ is a BDP $(p(r+2+x),x)_{x\in \N}$.
Applying Lemma \ref{lem:majo_ptw_gwi}, 
\begin{align*}
\sup_{f \in \text{Lip}(d)}\|\p g_f\|_\infty &\leq 2 \int_{0}^\infty{e^{-2(1-p)t} dt} \wedge 2 \sqrt{\frac{1-p}{p}}\frac{K(r+2)}{\sqrt{(2e)(r+2)}} \int_0^\infty{\frac{e^{-2(1-p)t}}{\sqrt{1-e^{-(1-p)t}}}dt}.
\end{align*}
The function $K$ is decreasing on $(\frac{1}{2},\infty)$, hence $K(r+2)\leq K(2)$. (The function $K$ is bounded from below by a positive constant on $(\frac{1}{2},\infty)$, hence by writing this majoration we do not lose the rate in $r$.) Furthermore, 
$$\int_0^\infty{\frac{e^{-2(1-p)t}}{\sqrt{1-e^{-(1-p)t}}}dt}=\frac{4}{3(1-p)}.$$ 
Finally,
\begin{eqnarray*}
\sup_{f \in \text{Lip}(d)}\|\p g_f\|_\infty &\leq&   \min\left\{\frac{1}{1-p},\frac{D}{\sqrt{(r+2)p(1-p)}}\right\} ,
\end{eqnarray*}
with $D=\frac{8K(2)}{3\sqrt{2e}}=\frac{4\Gamma(3/2)}{3\sqrt{e}}=\frac{2\sqrt{\pi}}{3\sqrt{ e}}.$ 
\end{proof}

The Proposition \ref{thm:neg_bin_lip} might be compared to \cite[Theorem 1.1, equation (1.4)]{barbour2015stein}, which states the inequality
\begin{equation}
\label{eq:negbin_approx_barbour}
\sup_{f \in \text{Lip}(d)}\|\p g_f\|_\infty \leq \min\left\{ \frac{2}{1-p}, \frac{1+p}{(1-p)^2},\frac{1.5}{\sqrt{rp(1-p)^3}}\right\}.
\end{equation}
We observe that:
\begin{itemize}
\item The numerical constant in front of $1/(1-p)$ is improved.
\item As $\frac{D}{\sqrt{(r+2)p(1-p)}}\leq \frac{0.8 }{\sqrt{rp(1-p)}}$ and $0.8\leq \frac{1.5}{1-p}$, we have:
$$\frac{D}{\sqrt{(r+2)p(1-p)}} \leq  \frac{1.5}{\sqrt{rp(1-p)^3}}.$$
\end{itemize}
Note that the proofs are similar up to the formula
$$\sup_{f \in \text{Lip}(d)}\ |\p g_f(i)|=-\int_0^\infty{\p \p^* P_t \1_i dt}.$$
We then apply the second order intertwining formula, whereas \cite{barbour2015stein} use another technique. In both cases, a bound of the type $\sup_i \mathbb{P}(Y_t^i=i)$ is needed, but not for the same process $(Y_t)_{t\geq 0}$.\\

For the Stein factor associated to bounded functions, at the order $1$ we find the following result.
\begin{proposition}[Estimation of the first Stein factor for bounded functions and NB-approximation]
\label{thm:neg_bin_bdd}
With the same assumptions as in Theorem \ref{thm:neg_bin_lip}, we have:
\begin{eqnarray*}
\sup_{0\leq f \leq 1}\| g_f\|_\infty &\leq&   \frac{1}{1-p}\wedge \frac{\sqrt{\pi}}{\sqrt{(r+1)p(1-p)}}.
\end{eqnarray*}
\end{proposition}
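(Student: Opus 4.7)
The plan is to run the same template as the proof of Proposition \ref{thm:neg_bin_lip}, but starting from the first-order Stein bound for bounded functions rather than the Lipschitz version. Concretely, I would apply Theorem \ref{thm:stein_factor_tv_order1} with the weight $u=1$. The first-order intertwining computed in Section \ref{sec:GWI} shows that for the GWI process the associated potential $V_1=1-p$ is a positive constant and that $(P_{1,t})_{t\geq 0}$ is the semigroup of the BDP with rates $(p(r+1+x),x)_{x\in\N}$. Hence $\sigma(1)=1-p$ and Theorem \ref{thm:stein_factor_tv_order1} gives
$$\sup_{0\leq f\leq 1}\|g_f\|_\infty \,\leq\, \int_0^\infty e^{-(1-p)t}\,\sup_{i\in\N}\P(X_{1,t}^i=i)\,dt,$$
where $(X_{1,t}^i)_{t\geq 0}$ denotes this shifted GWI process.

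Next I would estimate the integrand in two complementary ways. The trivial bound $\sup_i\P(X_{1,t}^i=i)\leq 1$ already yields the first piece $1/(1-p)$. For the refined piece, I would apply Lemma \ref{lem:majo_ptw_gwi} with parameter $r+1$ (allowed since $r>0$ gives $r+1>1/2$), producing an integrand of the form $e^{-(1-p)t}/\sqrt{2e}\,\sqrt{(1-p)/(p(1-e^{-(1-p)t}))}\,K(r+1)/\sqrt{r+1}$. The substitution $s=1-e^{-(1-p)t}$ reduces the time integral to $\int_0^1 s^{-1/2}\,ds/(1-p)=2/(1-p)$, giving an overall bound of the form $\sqrt{2/e}\,K(r+1)/\sqrt{(r+1)p(1-p)}$.

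The final step is a cosmetic simplification of the numerical constant. Using that the ratio $K(r)=\sqrt{r}\,\Gamma(r-1/2)/\Gamma(r)$ is decreasing on $[1,\infty)$ with $K(1)=\sqrt{\pi}$ (a consequence of Gautschi-type inequalities on Gamma ratios, converging to $1$ at infinity), we have $K(r+1)\leq\sqrt{\pi}$ for all $r\geq 0$; combined with $\sqrt{2/e}<1$ this gives the stated constant $\sqrt{\pi}$. Taking the minimum of the two bounds produces the displayed result.

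I do not anticipate any real obstacle: the argument is a direct chain of (i) the generic first-order Stein factor bound in total variation (Theorem \ref{thm:stein_factor_tv_order1}), (ii) the first-order intertwining \eqref{formula_intertwining_negative_binomial} in the GWI setting, and (iii) the uniform pointwise estimate of Lemma \ref{lem:majo_ptw_gwi}. The only point that requires care is keeping track of the parameter shift induced by the intertwining: the bound involves $r+1$ rather than $r$ precisely because $(P_{1,t})_{t\geq 0}$ corresponds to the GWI process with immigration parameter $r+1$, not $r$.
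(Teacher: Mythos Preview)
Your proposal is correct and is precisely the argument the paper has in mind: the authors omit the proof, saying only that it is ``very similar to the one of Proposition \ref{thm:neg_bin_lip}'', and your template---Theorem \ref{thm:stein_factor_tv_order1} with $u=1$, the first-order intertwining giving the GWI process with parameter $r+1$, Lemma \ref{lem:majo_ptw_gwi}, then the same change of variable---is exactly that analogy carried through. The parameter shift $r\mapsto r+1$ is handled correctly, and the final step $\sqrt{2/e}\,K(r+1)\leq \sqrt{2/e}\,\sqrt{\pi}\leq\sqrt{\pi}$ matches the stated constant (in fact your intermediate bound $\sqrt{2\pi/e}$ is slightly sharper than what the paper records).
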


We do not detail the proof which is very similar to the one of Proposition \ref{thm:neg_bin_lip}.

This result improves on a result of \cite[Lemma 3]{brown1999negative} which states
$$\sup_{0\leq f \leq 1}\| g_f\|_\infty \leq \frac{1}{p \vee (1-p)\1_{r\geq 1} } .$$

We do not develop the case of the second Stein factor of bounded functions, where the upper bound given by Theorem \ref{thm:stein_factor_tv_order2} recovers the simple inequality
$$\sup_{0 \leq f \leq 1}\|\p g_f\|_\infty \leq  \sup_{f \in \lip(d_1)}\|\p g_f\|_\infty.$$
Results about this factor can be found in \cite[Theorem 2.10]{brown2001stein}, in \cite[example 2.12]{eichelsbacher2008stein} for the case $r=1$, and in \cite[Lemma 5]{brown1999negative}.\\

%

If $\varphi: x \mapsto r+x$, $r \in \N$ and $\lambda \in (0,1)$ then $\mathcal{I}_\varphi(\lambda)=$NB$(r,\lambda)$. The variable $W+r$ then represents the number of trials that are necessary to obtain $r$ successes in a Bernoulli experiment with a random probability of gain.\\

To conclude this section, we observe that the Stein operator associated to a probability measure is not unique, and that resulting Stein's factors depend on the choice of the operator. When $r=1$, we recover the geometric law as the invariant distribution, similarly to the forthcoming example. This is the choice of \cite{eichelsbacher2008stein} to study the geometric distribution. In the next section we choose another Stein's operator.

\subsection{The M/M/$1$ process and the geometric approximation}
\label{sect:exemplesMM1}

Let $(X_t^x)_{t\geq 0}$ be a BDP$(\a,\b)$ with rates $\a(x)=\a,\,\b(x)=\b\1_{x\in \N^*}$ on $\N$. We suppose that $\rho:=\frac{\a}{\b}<1$. We denote by $(P_t)_{t\geq 0}$ the associated semigroup. Its invariant distribution is the geometric law $\mathcal{G}(\rho)$ with pointwise probabilities $p(k)=(1-\rho)\rho^k$ for $k \in \N$. Notice that this is the definition of the geometric law with support $\N$ and not $\N^*$. Let us choose $u(x)=r^x,\,v(x)=q^x$ for $x\in \N$ with $r>0, \,q\geq 1$. Theorem \ref{theorem_intertwining_order1} gives rise to a Feynman-Kac semigroup composed of a birth-death semigroup $(P_{u,t})_{t\geq 0}$ with rates $(\a_u,\b_u)$ and a potential $V_u$, which are defined as
\begin{align*}
\a_u(x)&=r\a, & \b_u(x)&=\frac{1}{r}\b,& V_u(x)&=(1-r)\a+\left(1-\frac{1}{r}\1_{x\in \N^*}\right)\b,&x\in \N.
\end{align*}
$$$$
The semigroup $(P_{u,t}^{V_u})_{t\geq 0}$ is still a semigroup associated to a M/M/$1$ queue, only with modified rates. The potential $V_u$, while non-constant, is non-increasing on $\N$. By Theorem \ref{theorem_intertwining_order2_hard}, we find a Feynman-Kac semigroup $(P_{u,*v,t}^{V_{u,*v}})_{t\geq 0}$ where $(P_{u,*v,t})_{t\geq 0}$ is again a semigroup corresponding to a M/M/$1$ queue. The rates and potential are defined on $\N$ as 
\begin{align*}
\a_{u,*v}(x)&=q r\a, \quad \b_{u,*v}(x)=\frac{1}{q r}\b\1_{x\in \N^*}, & x\in \N,&&&\\
V_{u,*v}(x)&=(1-qr)\a+\left(1-\frac{1}{qr}\right)\b,& x\in \N^*&,& V_{u,*v}(0)&=\a -(1+q)r\a +\b.
\end{align*}

Remark that, in contrast with the general case of Theorem \ref{theorem_intertwining_order2_hard}, the semigroup $(P_{u,*v,t})_{t\geq 0}$ is again a birth-death semigroup. This is due to the fact that $V_u$ is constant on $\N^*$. The potential $V_u$ is not constant on $\N$, which prevents us to apply Theorem \ref{theorem_intertwining_order2_easy}, but it is almost constant which explains heuristically why we find again a birth-death process when applying Theorem \ref{theorem_intertwining_order2_hard}.\\

Set $\sigma(u,*v)=\inf_{x\in \N}{V_{u,*v}(x)}=\min(V_{u,*v}(0), V_{u,*v}(1))$. A few calculations show that
$$\max \left\{\sigma(u,*v)\,|\,u(x)=r^x, \,v(x)=q^x,\, r>0, \, q\geq 1 \right\}=(\sqrt{\beta}-\sqrt{\alpha})^2,$$
and the $\arg \max$ is realized for all $r\leq \sqrt{\b/\a}=\rho^{-1}$ and $q=\rho^{-1}/r$. This means that there is a range of choice for the parameters $(r,q)$ allowing to recover the spectral gap $(\sqrt{\b}-\sqrt{\a})^2$ of the process in the convergence of Theorem \ref{theorem_contraction_semigroup_zolotarev_distance}. However, contrary to the two preceding examples, notice that the second order intertwining does not allow to improve on the spectral gap and that the rate of convergence in the distance $\zeta_{u,*v}$ is the same as the rate of convergence in the Wasserstein distance $W_{d_u}$ for the best choices of $u,v$.\\

This example is maybe the most important because, in contrast with the two previous processes, the M/M/$1$ queue is not known to satisfy a Mehler formula of the type \eqref{eq:mehler-poisson}, which would make it rather difficult to differentiate directly. A Mehler-like formula can nevertheless be deduced from Theorem \ref{theorem_intertwining_order1}: choosing $u=1$ in this theorem, we derive 
$$
\mathbb{E}[ f(X^{x+1}_t) -f(X^{x}_t)]= \mathbb{E}\left[e^{-\int_0^t V(X^x_s) ds} (f(X^{x}_t+1) -f(X^{x}_t))\right],
$$
where $(X^x_t)_{t\geq 0}$ is M/M/1 process starting from $x$ and $V(x)=\beta \mathbf{1}_{x=0}$. As a consequence, if $B_t$ is a Bernoulli random variable verifying
\begin{align*}
\mathbb{P}(B_t=1 \ | \ (X^x_s)_{s\leq t}) &= e^{-\int_0^t V(X^x_s) ds},&t \geq 0,
\end{align*}
then,
\begin{align*}
\mathbb{E}[ f(X^{x+1}_t)]&= \mathbb{E}[(f(X^{x}_t+B_t)], &t \geq 0,
\end{align*}
and by induction there exists a random variable $Y^x_t$ such that
\begin{align*}
\mathbb{E}[ f(X^{x}_t)]&= \mathbb{E}[(f(X^{0}_t+ Y^x_t)], &t \geq 0.
\end{align*}

This formula seems to be new (even if the instantaneous distribution of the M/M/$1$ process is known, see \cite{baccelli1989sample}). Unfortunately, the random variable $Y^x_t$ is not independent from $X^{0}_t$ and this makes this formula less powerful than \eqref{eq:mehler-poisson}. This approach is generalizable for every BDP with constant birth rate (so that the processes $(X_{1,t})_{t\geq 0}$ and $(X_{t})_{t\geq 0}$ have the same law).\\

As in the preceding examples, we state a lemma related to the instantaneous probabilities of the modified process before turning to the Stein factors for geometric approximation.
\begin{lemma}[Upper bound of the instantaneous probabilities of a M/M/$1$ queue]
\label{lm:majo_mm1}
Let $(Y_t)_{t\geq 0}$ be a M/M/$1$ queue with rates $(\lambda,\lambda\1_{\N^*})$. Then for all $t\geq 0$,
$$\sup_{i \in \N^*}{\P(Y_t^i=i)} \leq \frac{1}{\sqrt{\lambda t}}.$$
\end{lemma}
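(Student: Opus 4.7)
The approach I would take is to reduce the problem to computing return probabilities of a continuous-time simple symmetric random walk on $\mathbb{Z}$, via the classical method of images; the latter probabilities are known explicitly in terms of modified Bessel functions, and elementary estimates on those functions then yield the desired $1/\sqrt{\lambda t}$ decay.

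First I would introduce an auxiliary process $(Z_t^i)_{t\geq 0}$ on $\mathbb{Z}$ consisting of a continuous-time simple symmetric random walk with jumps to each neighbor at rate $\lambda$, started at $i \in \N$. Its transition kernel is Skellam-distributed, namely $\P(Z_t^i = j)=e^{-2\lambda t} I_{|j-i|}(2\lambda t)$, where $I_n$ denotes the modified Bessel function of the first kind of order $n$. Reflecting at the half-integer $-1/2$ by extending every bounded $f:\N \rightarrow \R$ to $\mathbb{Z}$ via $f(-k-1)=f(k)$ for $k \geq 0$, one checks that the generator of $Z$ applied to this extension coincides on $\N$ with the generator of $Y$ applied to $f$: the critical point is at the boundary $0$, where the ``virtual'' jump from $0$ to $-1$ contributes $\lambda(f(-1)-f(0))=0$ and so does not disturb the reflecting dynamics of $Y$. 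This yields, for all $i,j \geq 0$,
$$\P(Y_t^i=j) = \P(Z_t^i=j)+\P(Z_t^i=-j-1) = e^{-2\lambda t}\left[I_{|j-i|}(2\lambda t)+I_{j+i+1}(2\lambda t)\right].$$

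Specializing to $j=i$ and using the monotonicity $I_n(x)\leq I_0(x)$ for all $n \in \N$ and $x \geq 0$ (immediate from the Schläfli integral representation $I_n(x)=\frac{1}{\pi}\int_0^\pi e^{x\cos\theta}\cos(n\theta)\, d\theta$), one obtains $\sup_{i\in\N^*}\P(Y_t^i=i)\leq 2 e^{-2\lambda t}I_0(2\lambda t)$. The final step is to bound $e^{-x}I_0(x)$ via the same integral representation for $n=0$, the identity $1-\cos\theta=2\sin^2(\theta/2)$, and the standard inequality $\sin u\geq 2u/\pi$ on $[0,\pi/2]$; a direct Gaussian computation gives $e^{-x}I_0(x)\leq \sqrt{\pi/(8x)}$. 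Applied with $x=2\lambda t$, this yields $\sup_{i\in\N^*}\P(Y_t^i=i)\leq \sqrt{\pi}/(2\sqrt{\lambda t})$, which is even slightly sharper than the announced bound $1/\sqrt{\lambda t}$.

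The main obstacle is the first step, namely the correct identification of the reflecting boundary condition: one has to verify that the image point is indeed $-1/2$ (equivalently, that the correct extension is $f(-k-1)=f(k)$ rather than $f(-k)=f(k)$), so that the generator of $Z$ acting on the extended function matches the generator of $Y$ on $\N$. Once this reduction to the free random walk on $\mathbb{Z}$ is secured, the remainder of the argument is routine Bessel function manipulation; in particular, the appearance of $2\lambda t$ rather than $\lambda t$ inside the Bessel functions, combined with the factor $2$ from the reflection, is precisely what produces the constant $\sqrt{\pi}/2 < 1$ in front of $1/\sqrt{\lambda t}$.
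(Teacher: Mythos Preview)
Your proof is correct and takes a genuinely different route from the paper's. Both arguments ultimately pass through the free continuous-time simple random walk $(Z_t)_{t\geq 0}$ on $\mathbb{Z}$, but they use it in different ways. The paper first rescales time to reduce to $\lambda=1$, then invokes two results of Abate--Whitt: a monotonicity statement giving $\sup_{i}\P(\tilde{Y}_t^i=i)=\P(\tilde{Y}_t^0=0)$, and an identity $\P(\tilde{Y}_t^0=0)=\frac{1}{t}\E[Z_t^0\mathbf{1}_{Z_t^0>0}]$; it then bounds the last expectation by $\frac{1}{2}\sqrt{\mathrm{Var}(Z_t^0)}=\sqrt{t/2}$ via symmetry and Cauchy--Schwarz, yielding the constant $1/\sqrt{2}\approx 0.707$. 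Your method of images gives instead a closed formula $\P(Y_t^i=i)=e^{-2\lambda t}\bigl(I_0(2\lambda t)+I_{2i+1}(2\lambda t)\bigr)$ for every $i$, which you bound uniformly by $2e^{-2\lambda t}I_0(2\lambda t)$ and then by elementary Bessel estimates, obtaining the constant $\sqrt{\pi}/2\approx 0.886$. The paper's route produces a slightly sharper constant and needs less special-function machinery, but relies on cited structural results; your route is entirely self-contained, yields the full transition kernel of the reflected walk as a byproduct, and makes the mechanism (reflection at $-1/2$) completely explicit.
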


\begin{proof}
Let us consider the BDP $(\t{Y}_t)_{t\geq 0}$ with rates $(1,\1_{x \in \N^*})$. Then for all $t\geq 0$, the equality in law $Y_t= \t{Y}_{\lambda t}$ holds, hence it is enough to prove that $\sup_{i \in \N^*}{\P( \t{Y}_t^i=i)} \leq \frac{1}{\sqrt{ t}}.$ By \cite[Corollary 1 (d)]{abate1991decompositions}, the sequence $(\P( \t{Y}_t^i=i))_{i\geq 0}$ is non-increasing for every $t\geq 0$. Hence $\sup_{i \in \N}{\P( \t{Y}_t^i=i)} = \P( \t{Y}_t^0=0)$. By \cite[formula $(9)$ and Corollary 2 (a)]{abate1991decompositions},
$$\P( \t{Y}_t^0=0)=\sum_{j=1}^\infty\frac{j}{t}\P(Z_t^0=j)=\frac{1}{t}\E[Z_t^0 \1_{Z_t^0 > 0}],$$
where $(Z_t^0)_{t\geq 0}$ is a birth-death process with constant birth rate $1$ and constant death rate $1$ on the whole integer line $\mathbb{Z}$; namely this is the continuous-time simple random-walk.
This process can be represented as
$$
\forall t\geq 0, \quad Z^0_t= N^+_t - N^-_t,
$$
where $(N^+_t)_{t\geq 0}$ and $(N^-_t)_{t\geq 0}$ are two independent Poisson processes with intensity $1$. So, using that $N^1$ and $N^2$ have the same law and Cauchy-Schwarz's inequality
\begin{align*}
\E[Z_t^0 \1_{Z_t^0 > 0}]
&=\E[(N^+_t - N^-_t)\1_{N^+_t > N^-_t}]= \E[(N^-_t - N^+_t)\1_{N^-_t > N^+_t}]\\
&=\frac{1}{2} \E[|N^+_t - N^-_t|]\leq \frac{1}{2} \text{Var}(N^+_t-N^-_t)^{1/2} =\sqrt{\frac{t}{2}}.
\end{align*}
This yields
$$\sup_{i \in \N^*}{\P( \t{Y}_t^i=i)} \leq \P( \t{Y}_t^0=0)\leq \frac{1}{\sqrt{2t}},$$
which achieves the proof.
\end{proof}

Up to the knowledge of the authors, Stein's factors associated to the Wasserstein distance have not been studied yet. The following proposition provides upper bounds on these factors.

\begin{proposition}[Estimation of the Stein's factors for Lipschitz function and geometric approximation] 
\label{thm:geometric_approximation_wasserstein}
For all $0<\a<\b$, set $u(x)=q^x$ on $\N$ with $q=\sqrt{\frac{\b}{\a}}=\rho^{-1/2}$. Then,

\begin{eqnarray*}
\sup_{f \in \text{Lip}(d_u)}\left\| \frac{g_f}{u}\right\|_\infty & =&\frac{1}{\sigma(u)}=\frac{1}{(\sqrt{\b}-\sqrt{a})^2},\\
\sup_{f \in \text{Lip}(d_u)}\|\p_v g_f\|_\infty &\leq & \frac{1}{(\sqrt{\b}-\sqrt{a})^2}\left(1+\sqrt{\frac{\a}{\b}}\min\left\{1,\frac{2\sqrt{\pi}}{(\a \b)^{1/4}}(\sqrt{\b}-\sqrt{a})-1\right\}\right).
\end{eqnarray*}

\end{proposition}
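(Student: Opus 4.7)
My plan is to apply Theorems~\ref{thm:stein_factor_wasserstein_order1} and~\ref{thm:stein_factor_wasserstein_order2} to the M/M/$1$ setting after computing the relevant potentials and identifying the intertwined processes. Throughout I work with the choice $u(x)=q^x$, $q=\sqrt{\beta/\alpha}$, and use $v=u$ (the optimal choice from Section~\ref{sect:exemplesMM1}), so that $\partial_v g_f = \partial_u g_f$.

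\textbf{First bound.} A direct substitution into the formula for $V_u$ from Section~\ref{sect:main} gives $V_u(0)=\alpha+\beta-\sqrt{\alpha\beta}$ and $V_u(x)=(\sqrt{\beta}-\sqrt{\alpha})^2$ for every $x\in\N^*$. Hence $V_u$ is non-increasing and bounded below by $\sigma(u)=(\sqrt{\beta}-\sqrt{\alpha})^2>0$, and Theorem~\ref{thm:stein_factor_wasserstein_order1} yields the upper bound $1/\sigma(u)$. Although $V_u$ is not constant on the whole of $\N$, it is constant on $\N^*$, and the excess $V_u(0)-\sigma(u)=\sqrt{\alpha\beta}$ contributes to the Feynman--Kac integrand only through the local time of $(X_{u,s})$ at $0$. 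Testing the extremal candidate $\partial_u f \equiv -1$ (which lies in $\mathrm{Lip}(d_u)$) and letting the initial state $x\to\infty$, the integrand $\mathbb{E}_x[\exp(-\int_0^t V_u(X_{u,s})\,ds)]$ converges to $e^{-\sigma(u)t}$ by dominated convergence, so the upper bound is saturated.

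\textbf{Crude version of the second bound.} I would next check $\mathbf{H_1}$: with inner weight $1$, $V_1\equiv 0$ is trivially non-increasing and non-negative, and $u=q^x$ is bounded below by $1$ since $q\geq 1$. The second-order potential, obtained via the formula of Theorem~\ref{theorem_intertwining_order2_hard}, takes the values $V_{1,*u}(0)=\beta-\sqrt{\alpha\beta}$, $V_{1,*u}(1)=\alpha+\beta-\sqrt{\alpha\beta}$, and $V_{1,*u}(x)=(\sqrt{\beta}-\sqrt{\alpha})^2$ for $x\geq 2$, giving $\sigma(1,*u)=(\sqrt{\beta}-\sqrt{\alpha})^2$. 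The general inequality of Theorem~\ref{thm:stein_factor_wasserstein_order2} then reads
\[
\|\partial_u g_f\|_\infty\leq\frac{1}{\sigma(1,*u)}\Bigl(1+\frac{1}{q}\Bigr)=\frac{1+\sqrt{\alpha/\beta}}{(\sqrt{\beta}-\sqrt{\alpha})^2},
\]
which is precisely the $\min=1$ alternative of the proposition.

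\textbf{Refined version.} The refined inequality of Theorem~\ref{thm:stein_factor_wasserstein_order2}, valid because $u(x)=q^x$ with $q\geq 1$, requires a quantitative bound on $\sup_{i\in\N^*}\mathbb{P}(X_{1,*u,t}^i=i)$. A direct computation of the rates of $(X_{1,*u,t})$ yields constant birth rate $\alpha_{1,*u}\equiv q\alpha=\sqrt{\alpha\beta}$, death rate $\beta_{1,*u}(x)=\sqrt{\alpha\beta}$ for $x\geq 2$, and $\beta_{1,*u}(1)=0$. The main subtle point is that $\beta_{1,*u}(1)=0$ prevents a direct application of Lemma~\ref{lm:majo_mm1}; however the shift $Y_t:=X_{1,*u,t}^i-1$, starting from any $i\in\N^*$, is a genuine M/M/$1$ queue on $\N$ with rates $(\sqrt{\alpha\beta},\sqrt{\alpha\beta}\mathbf{1}_{\N^*})$, so Lemma~\ref{lm:majo_mm1} applied to $Y$ gives
\[
\sup_{i\in\N^*}\mathbb{P}(X_{1,*u,t}^i=i)=\sup_{j\in\N}\mathbb{P}(Y_t^j=j)\leq\frac{1}{(\alpha\beta)^{1/4}\sqrt{t}}.
\]
Plugging this into the refined inequality and using $\int_0^\infty e^{-\sigma t}/\sqrt{t}\,dt=\sqrt{\pi/\sigma}$, the integral rearranges into $\sigma^{-1}\bigl[1+\sqrt{\alpha/\beta}\bigl(\frac{2\sqrt{\pi}(\sqrt{\beta}-\sqrt{\alpha})}{(\alpha\beta)^{1/4}}-1\bigr)\bigr]$. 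Taking the minimum with the crude bound gives the stated inequality.
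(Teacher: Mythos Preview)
Your overall strategy matches the paper's: invoke Theorem~\ref{thm:stein_factor_wasserstein_order1} for the first line and Theorem~\ref{thm:stein_factor_wasserstein_order2} together with Lemma~\ref{lm:majo_mm1} for the second. However, a computational slip forces an unnecessary detour.

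The claim ``$V_1\equiv 0$'' is wrong for the M/M/$1$ process: since $\beta(0)=0$, one has
\[
V_1(x)=\alpha(x)-\alpha(x+1)+\beta(x+1)-\beta(x)=\beta\,\mathbf{1}_{\{x=0\}},
\]
so $V_1(0)=\beta$ and $V_1(x)=0$ only for $x\geq 1$. This error propagates. With the correct $V_1$, the term $v(0)\,\partial_v^*V_1(1)=\beta/q=\sqrt{\alpha\beta}$ contributes both to $\beta_{1,*u}(1)$ and (with the opposite sign) to $V_{1,*u}(1)$; consequently $\beta_{1,*u}(1)=\sqrt{\alpha\beta}$, not $0$, and $V_{1,*u}(1)=(\sqrt{\beta}-\sqrt{\alpha})^2$, not $\alpha+\beta-\sqrt{\alpha\beta}$. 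In other words, $(X_{1,*u,t})$ is \emph{already} the M/M/$1$ queue with rates $(\sqrt{\alpha\beta},\sqrt{\alpha\beta}\,\mathbf{1}_{\N^*})$, as recorded in Section~\ref{sect:exemplesMM1} and used directly in the paper's proof; no shift is needed and Lemma~\ref{lm:majo_mm1} applies verbatim. Your shift fortuitously lands on the same process, so the final bound survives, but as written the argument also requires $\sup_{j\in\N}$ rather than the $\sup_{j\in\N^*}$ that Lemma~\ref{lm:majo_mm1} literally states. None of this affects $\sigma(1,*u)$, which remains $(\sqrt{\beta}-\sqrt{\alpha})^2$ either way.

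On the first line you actually do more than the paper, which simply appeals to Theorem~\ref{thm:stein_factor_wasserstein_order1} even though $V_u$ is not constant on all of $\N$. Your idea of testing $\partial_u f\equiv -1$ and sending the starting point $x\to\infty$ is correct: for each fixed $t$ the local time at $0$ vanishes in the limit, so the Feynman--Kac integrand increases monotonically to $e^{-\sigma(u)t}$, and monotone convergence gives the matching lower bound $1/\sigma(u)$.
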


\begin{proof}[Proof of Proposition \ref{thm:geometric_approximation_wasserstein}]
By application of Theorem \ref{thm:stein_factor_wasserstein_order1}, one has immediately the first equation. By Theorem \ref{thm:stein_factor_wasserstein_order2} with $u(x)=q^x\, ,q=\rho^{-1/2}= \sqrt{\frac{\b}{\a}}$, we have:
\begin{equation*}
\sup_{f \in \text{Lip}(d_u)}\|\p_v g_f\|_\infty \leq  \int_0^\infty{e^{-(\sqrt{\b}-\sqrt{a})^2t}\left(1-\sqrt{\frac{\a}{\b}}+2\sqrt{\frac{\a}{\b}}\sup_{i \in \N^*}{\P(X_{1,*u,t}^i=i)}\right)dt},
\end{equation*}
where $(X_{1,*u,t}^i)_{t\geq 0}$ is a M/M/$1$ queue with rates $(\sqrt{\a \b},\sqrt{\a \b}\1_{\N^*})$. On the one hand, this yields directly
\begin{equation*}
\sup_{f \in \text{Lip}(d_u)}\|\p_v g_f\|_\infty \leq  \frac{1}{(\sqrt{\b}-\sqrt{a})^2}\left(1+\sqrt{\frac{\a}{\b}}\right).
\end{equation*}
On the other hand, as a consequence of Lemma \ref{lm:majo_mm1}, one has
\begin{eqnarray*}
\sup_{f \in \text{Lip}(d_u)}\|\p_v g_f\|_\infty & \leq & \int_0^\infty{e^{-(\sqrt{\b}-\sqrt{a})^2t}\left(1-\sqrt{\frac{\a}{\b}}+2\frac{1}{(\a \b)^{1/4}}\sqrt{\frac{\a}{\b}}\frac{1}{\sqrt{t}}\right)dt}\\
&=&\frac{1}{(\sqrt{\b}-\sqrt{a})^2}\left(1-\sqrt{\frac{\a}{\b}}\right)+\frac{1}{(\sqrt{\b}-\sqrt{\a})}\sqrt{\frac{\a}{\b}}\frac{2}{(\a \b)^{1/4}}\int_0^\infty{e^{-t}\frac{dt}{\sqrt{t}}}\\
&=&\frac{1}{(\sqrt{\b}-\sqrt{a})^2}\left(1-\sqrt{\frac{\a}{\b}}\right)+\frac{1}{(\sqrt{\b}-\sqrt{\a})}\sqrt{\frac{\a}{\b}}\frac{2\sqrt{\pi}}{(\a \b)^{1/4}}.
\end{eqnarray*}
\end{proof}

\begin{remark}[On the best upper bound]
The expression $\frac{2\sqrt{\pi}}{(\a \b)^{1/4}}(\sqrt{\b}-\sqrt{a})-1$ is smaller than $1$ as soon as 
$$\frac{\sqrt{\b}-\sqrt{a}}{(\a \b)^{1/4}} < \frac{1}{\sqrt{\pi}},$$
so there is a range of values of the parameters $\a$ and $\b$, for example if they are close to each other, for which the factor inside the $\min$ is actually a better upper bound than $1$.
\end{remark}


We now turn to the subject of the mixture of geometric laws. Set $\varphi=1$ and $\rho<1$, then $\mathcal{I}_\varphi(\rho) =\mathcal{G}(\rho)$. We choose $u(k)=q^k$ on $\N$, hence $d_u(x,y)=|q^x-q^y|/|q-1|$. The preceding theorem put together with Theorem \ref{th:bornesI} gives for $q=\rho^{-1/2}$ and in the case where $\rho'<\sqrt{\rho},$
$$
W_{d_u} (\mathcal{G}(\rho), \mathcal{G}(\rho')) \leq |\rho - \rho'| \times \frac{1}{(1-\sqrt{\rho})^2} \times \frac{1-\rho'}{\sqrt{\rho}-\rho'}.
$$
The case $\rho'>\sqrt{\rho}$ is similar.\\

By the same reasoning as the one used in the proof of Theorem \ref{th:mixture}, for a random variable $R$ such that $\E[R]=\rho$, and a random variable such that $\mathcal{L}(W|R)=\mathcal{G}(R)$, we have the inequality:
\begin{align*}
d_\mathcal{F}(\mathcal{L}(W),\mathcal{G}(\rho))&\leq \sup_{f\in \mathcal{F}}\|\p_u g_f\|_\infty\E[(\rho-R)d_u(W+1,G+1)],
\end{align*}
where $G\sim\mathcal{G}(\rho)$. Let $G'\sim \mathcal{G}(\rho')$. With the interpretation of the geometric laws as the number of repetitions of a binary experiment before the first success, it is easy to find a coupling such that a.s. $G\leq G'$ when $\rho\leq \rho'$.
This yields
$$\E[d_u(G,G')]=\frac{1}{|1- q|}\left|\frac{1-\rho}{1-q\rho}-\frac{1-\rho'}{1-q\rho'}\right|=\frac{|\rho-\rho'|}{|(1-q\rho)(1-q\rho')|}.$$
Hence, if a.s. $R<\frac{1}{q}$, by Remark \ref{rq:mixture_unbiased_bis}:
\begin{align*}
d_\mathcal{F}(\mathcal{L}(W),\mathcal{G}(\rho))&\leq \sup_{f\in \mathcal{F}}\|\p_u g_f\|_\infty\frac{q}{1-q\rho}\E\left[\frac{|\rho-R|^2}{(1-qR)}\right].
\end{align*}

Finally, by taking $q=\rho^{-1/2}$, one finds that for two random variables $R,S$ such that $\E[R]=\rho$ and a.s. $R<\frac{1}{\sqrt{\rho}}$, and $\mathcal{L}(W|R)=\mathcal{G}(R)$, the following upper bound holds:
\begin{align*}
d_\mathcal{F}(\mathcal{L}(W),\mathcal{G}(\rho))\,\leq\, \frac{1+\frac{1}{\sqrt{\rho}}}{(1-\sqrt{\rho})^3}\,\E\left[\frac{|\rho-R|^2}{(1-\frac{R}{\sqrt{\rho}})}\right].
\end{align*}

%
%

\subsection{Another example}

\label{sect:sizeb}

Let us consider the BDP$(\a,\b)$ with $\alpha(x)=x+2, \beta(x)=x^2$ on $\N$. Its invariant measure is a Poisson size-biased type distribution, defined as
\begin{align*}
\pi(x)&=\frac{1}{2e}\frac{(x+1)}{x!},&x\in \N.
\end{align*}
Here size-biased means that if $X\sim \pi$ and $Y \sim \mathcal{P}(1)$ then:
\begin{align*}
\mathbb{P}(X=x)&= \frac{\mathbb{E}[(Y+1) \mathbf{1}_{Y=x}]}{\mathbb{E}[(Y+1)]} = \frac{(x+1) \mathbb{P}(Y=x)}{\sum_{j\geq 0} (j+1) \mathbb{P}(Y=j)}, &x\in \N.
\end{align*}
Choosing the weight $u$ such that $u(x+1)/u(x)=(x+1)/(x+3)$ for all $x\in \N$, i.e. for example
\begin{align*}
u(x) &= \frac{1}{(x+1)(x+2)}, &x\in \N,
\end{align*}
we find that $V_u$ is constant. By Theorem \ref{theorem_intertwining_order2_easy} with $v=1$, we have an intertwining with potential $V_{u,v} (x)=2x+1$ on $\N$. Moreover, by Theorem \ref{theorem_contraction_semigroup_zolotarev_distance}, we have convergence of the semigroup towards $\pi$ in the distance $\zeta_{u,1}$ at rate $1$. \\

The three next sections are devoted to the omitted proofs of the previous results.

\section{Proofs of Section \ref{sect:main}}

\label{sect:proof_main}
\subsection{First order intertwining for the backward gradient $\p_u^*$}
\label{sect:backward}

First of all, let us state the analogous of Theorem \ref{theorem_intertwining_order1} for the backward gradient $\p^*$. Let $(P_{*u,t})_{t\geq 0}$ be the birth-death semigroup associated to the generator $L_{*u}$, where for all non-negative or bounded function $f:\N \rightarrow \R$ and $x \in \N$, 
\begin{align*}
L_{*u} f&=\a_{*u}\, \p f+\b_{*u}\, \p^*f, &V_{*u}&=\bw{\a}-\a_{*u}+\b-\b_{*u},\\
\a_{*u}(x) &= \frac{u(x+1)}{u(x)}\a(x), & \b_{*u}(x) &=\frac{u(x-1)}{u(x)}\b(x-1)\1_{x\in \N^*}.\\
\end{align*} 
The potential $V_{*u}$ can be rewritten under the compacted form $V_{*u}=\p_u^*\left(\fw{u}\a-u \b\right)$. We can also notice that $V_{*u}=\Bw{V_{\fw{u}}}$ on $\N^*$. 

\begin{theorem}[First-order intertwining relation for the backward gradient]
\label{theorem_intertwining_order1_adjoint_gradient}
If $V_{*u}$ is bounded from below, then for every real-valued function on $\N$ such that $\|\p_u^*f\|_\infty <+\infty$, and for all $t\geq 0$,
\begin{equation}
\label{formula_intertwining_order1_adjoint_gradient_sg}
\p_u^* P_t f= P_{*u,t}^{V_{*u}} \,\p_u^* f.
\end{equation}
\end{theorem}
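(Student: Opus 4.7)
The plan is to follow the scheme of the proof of Theorem \ref{theorem_intertwining_order1} (the forward-gradient version from Chafaï--Joulin), but adapted to the backward gradient. Everything reduces to a pointwise commutation relation at the level of generators,
\begin{equation*}
\p_u^* L f \;=\; (L_{*u} - V_{*u})\,\p_u^* f,
\end{equation*}
which I would then transport to the semigroup level by a standard interpolation argument. The definitions of $\a_{*u}$, $\b_{*u}$ and $V_{*u}$ are manifestly tailored to make this generator identity hold; this is already suggested by the compact form $V_{*u} = \p_u^*(\fw{u}\a - u\b)$, which mirrors the forward identity $V_u = \p_u(\bw{u}\b - u\a)$.

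Concretely, I would first verify the pointwise identity. For $x \geq 2$ it is a routine rearrangement: expand $Lf(x-1)-Lf(x)$, divide by $u(x)$, expand $(L_{*u} - V_{*u})\p_u^* f(x)$, and check that every term matches after using $u(x)\p_u^* f(x) = f(x-1) - f(x)$. For $x=1$ the only change is that $\b(0)=0$ kills one term, and the identity still holds. For $x=0$ one uses the Dirichlet convention $\p^* f(0) = -f(0)$ together with $\bw u(0)=0$ (so $\bw\a(0)=0$ and $V_{*u}(0) = -\a_{*u}(0)$), and the identity reduces to $-Lf(0)/u(0)$ on both sides since $\b(0) = 0$.

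Second, I would promote the generator identity to the semigroup identity by differentiating along the two-parameter family
\begin{equation*}
F(s) \;=\; P_{*u,s}^{V_{*u}}\,\p_u^*\, P_{t-s} f, \qquad s \in [0,t].
\end{equation*}
Applying the Kolmogorov equations \eqref{equation_Kolomogorov_Feynamn-Kac} for $(P_{*u,t}^{V_{*u}})_{t\geq 0}$ on the left and for $(P_t)_{t\geq 0}$ on the right gives
\begin{equation*}
F'(s) \;=\; P_{*u,s}^{V_{*u}}\Bigl[(L_{*u}-V_{*u})\,\p_u^* P_{t-s}f \;-\; \p_u^* L P_{t-s} f\Bigr] \;=\; 0
\end{equation*}
by the pointwise identity applied to $P_{t-s}f$. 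The conclusion $F(0)=F(t)$ is precisely \eqref{formula_intertwining_order1_adjoint_gradient_sg}.

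The main obstacle is analytic rather than algebraic: one must justify that $F$ is well-defined and differentiable, that the Feynman-Kac semigroup $(P_{*u,t}^{V_{*u}})_{t\geq 0}$ satisfies the Kolmogorov equation on the class of functions $\p_u^* P_{t-s} f$, and that the intermediate quantity $\p_u^* P_{t-s} f$ remains bounded. This is exactly where the hypotheses intervene: $V_{*u}$ bounded from below ensures (via the killed-process interpretation after adding a constant) that $(P_{*u,t}^{V_{*u}})_{t\geq 0}$ is well-behaved on bounded functions, while $\|\p_u^* f\|_\infty < \infty$ propagates along the flow because the intertwining itself yields the a priori estimate $\|\p_u^* P_t f\|_\infty \leq e^{-(\inf V_{*u})t}\|\p_u^* f\|_\infty$; a standard approximation by bounded perturbations (as in \cite{chafai2013intertwining}) then closes the argument.
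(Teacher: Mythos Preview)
Your proposal follows essentially the same route as the paper: verify the generator identity $\p_u^* L f = (L_{*u}-V_{*u})\,\p_u^* f$, then interpolate via $s\mapsto P_{*u,s}^{V_{*u}}\,\p_u^* P_{t-s} f$ and conclude from $F'(s)=0$. The one substantive difference is in how you justify that $\p_u^* P_{t-s} f$ stays bounded. Your argument invokes the very intertwining you are proving (and then patches the circularity by an unspecified approximation); the paper instead uses the shift relation $\p_u^* g(x+1) = \p_{\fw u}\, g(x)$ and $V_{*u}(x+1) = V_{\fw u}(x)$ to reduce directly to the \emph{forward} case, where boundedness of $\p_{\fw u} P_{t-s} f$ is already known (Chen, 1996). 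This avoids any bootstrap and is worth adopting.
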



Let us call $({X}_{*u,t}^x)_{t\geq 0}$ the birth-death process of generator $L_{*u}$ such that ${X}_{*u,0}^x=x$. The process $({X}_{*u,t}^x)_{t\geq 0}$  is not irreducible, although it is indecomposable, i.e. it possesses only one recurrent class. Indeed if $x\in \N^*$ then $({X}_{*u,t}^x)_{t\geq 0}$ never visits the state $0$ as $\b_{*u}(1)=0$ and if $x=0$ the process $({X}_{*u,t}^0)_{t\geq 0}$ leaves $0$ almost surely. 

\begin{proof}[Proof of Theorem \ref{theorem_intertwining_order1_adjoint_gradient}]
The core of the proof relies on the intertwining relation at the level of generators:
\begin{equation}
\p_u^* Lf=L_{*u} \p_{u}^* f - V_{*u} \p_u^* f,
\label{formula_intertwining_order1_adjoint_gradient_gen}
\end{equation}
which derives by easy computations. The intertwining at the level of the semigroups follows by the same arguments as in the proof of Theorem $2.1$ of \cite{chafai2013intertwining}. We briefly recall these arguments. For all $s\in[0,t]$ let us set $J(s)=P_{*u,s}^{V_{*u}}(\p_u^* P_{t-s}f)$. If the function $\p_u^* P_{t-s}f$ is bounded on $\N$, then the Kolmogorov equations (\ref{equation_Kolomogorov_Feynamn-Kac}) for the Feynman-Kac semigroup $(P_{*u,t}^{V_{*u}})_{t\geq0}$ hold and 
$$J'(s)= P_{*u,s}^{V_{*u}}((L_{*u}-V_{*u})\p_{u}^*P_{t-s}f-\p_{u}^*LP_{t-s}f).$$ 
Thanks to the formula (\ref{formula_intertwining_order1_adjoint_gradient_gen}) this gives $J'(s)=0$. Hence $J(0)=J(t)$ which is exactly the identity (\ref{formula_intertwining_order1_adjoint_gradient_sg}).\\
Let us show that $\p_u^* P_{t-s}f$ is bounded on $\N$. Indeed, recall that $V_{*u}(x+1)={V_{\overset{\rightarrow}{u}}}(x)$ on $\N$. Furthermore $\p_u^*f(x+1)={\p_{\overset{\rightarrow}{u}}}f(x)$ on $\N$. Hence $V_{\overset{\rightarrow}{u}}$ and $\p_{\overset{\rightarrow}{u}}f$ are bounded on $\N$, which implies that $\p_{\overset{\rightarrow}{u}}P_{t-s}f$ is bounded (\cite{chen1996estimation}). For all positive integer $\p_{\overset{\rightarrow}{u}}P_{t-s}f(x)=\p_u^*P_{t-s}f(x+1)$, so $\p_u^*P_{t-s}f$ is bounded.
\end{proof}
%
 
\subsection{Alternative proof of first order intertwining theorems}
\label{sect:proofcoupling}

This section aims to give a sample path interpretation of the first order intertwining relations (\ref{formula_intertwining_sg_order1}) and (\ref{formula_intertwining_order1_adjoint_gradient_sg}), at least in a particular case. It is independent of the other sections.

We focus on the case where the weight is $u=1$ with non-increasing birth rates $(\a(x))_{x\in \N}$ and non-decreasing death rates $(\b(x))_{x\in \N}$.
When intertwining the birth-death semigroup with the forward gradient $\p$, one obtains a new birth-death semigroup with shifted birth rate and unchanged death rate
$$\a_{1}=\fw{\a},\quad \b_1=\b,$$ 
whereas when intertwining the birth-death semigroup with the backward gradient $\p^*$, one obtains a new birth-death semigroup with shifted death rate and unchanged birth rate:
$$\a_{*1}=\a,\quad \b_{*1}=\bw{\b}.$$
In order to explain this fact, we will give a probabilistic proof of the formulae (\ref{formula_intertwining_sg_order1}) and (\ref{formula_intertwining_order1_adjoint_gradient_sg}).
Recall that for all real-valued bounded functions on $\N$ and $x \in \N$,
\begin{eqnarray*}
\p P_t f(x) &=& \mathbb{E}[f(X_t^{x+1})-f(X_t^x)],\\
\p^* P_t f(x+1) &=& \mathbb{E}[f(X_t^{x})-f(X_t^{x+1})].
\end{eqnarray*} 
At time $t=0$, $X_t^{x+1}=X_t^x+1$. We construct a process $(S_t)_{t\geq 0}$ such that for all $t\geq 0$,  $X_t^{x+1}=X_t^x+S_t$ and $S_t\in \left\{0,1 \right\}$. If for a time $t$, $S_t=0$, then we choose the sticking coupling between $(X_{t+s}^x)_{s\geq 0}$ and $(X_{t+s}^{x+1})_{s\geq 0}$ (i.e. the process $(S_t)_{t\geq 0}$ is absorbed in $0$). If $S_t=1$, it is natural to construct the following coupling:
\begin{enumerate}
\item with rate $\a(X_t^{x}+1)=\a(X_t^{x+1})$, $X_t^x$ and $X_t^{x+1}$ jump upwards together and $S_t$ remains equal to $1$,
\item with rate $\b(X_t^{x})=\b(X_t^{x+1}-1)$, $X_t^x$ and $X_t^{x+1}$ jump downwards together and $S_t$ remains equal to \nolinebreak$1$,
\item with rate $\a(X_t^{x})-\a(X_t^{x}+1)=\a(X_t^{x+1}-1)-\a(X_t^{x+1})$, $X_t^x$ jumps upwards, $X_t^{x+1}$ does not jump and $S_t$ jumps from $1$ to $0$,
\item with rate $\b(X_t^{x+1})-\b(X_t^{x})=\b(X_t^{x+1})-\b(X_t^{x+1}-1)$, $X_t^{x+1}$ jumps downwards, $X_t^{x}$ does not jump and $S_t$ jumps from $1$ to $0$.
\end{enumerate}

This implies in particular that for all $t \geq 0$ the process $S_t$ jumps from $1$ to $0$ with rate
$$\a(X_t^{x})-\a(X_t^{x}+1) + \b(X_t^{x+1})-\b(X_t^{x})\,=\, V_1(X_t^x)\,=\,V_{*1}(X_t^{x+1}).$$ 
Moreover, conditionally to $\left\{S_t=1\right\}$, $(X^x_t)_{t\geq 0}$ evolves as a BDP$(\fw{\a},\beta)$ and  $(X^{x+1}_t)_{t\geq 0}$ evolves as a BDP$(\alpha,\bw{\b}))$. Indeed, as long as $S_t=1$, the steps $(3)$ and $(4)$ do not occur.\\

To exploit rigorously the preceding facts, let us introduce the BDP$(\fw{\a},\beta)$ starting from $x$ denoted by $(X_{1,t}^x)_{t\geq 0}$, whose standard filtration is $(\mathcal{F}_t)_{t\geq 0}$. The processes $(X_{t}^x)_{t\geq 0}$ and $(X_{1,t}^x)_{t\geq 0}$, as well as $(X_{t}^{x+1})_{t\geq 0}$ and $(X_{1,t}^x+1)_{t\geq 0}$, can be coupled as follows : 
\begin{enumerate}
\item Let $E$ be an exponential with parameter $1$ and $T$ such that
$
T= \inf\{t \geq 0 , \int_0^t V(X_{1,s}^x) ds > E \}.
$
\item Set $S_t=1$ if $t< T$ and $S_t=0$ otherwise.
\item Set $X^x_t= X_{1,t}^x$ for $t\leq T$.
\item At time $T$, sample a random variable $Z$ satisfying to
\begin{align*}
\mathbb{P}(Z= X_{1,T}^x +1 \ | \ \mathcal{F}_T ) &= \frac{\alpha(X_{1,T}^x) -\alpha(X_{1,T}^x+1)}{V(X_{1,T}^x)},\\
 \mathbb{P}(Z= X_{1,T}^x \ | \ \mathcal{F}_T ) &= \frac{\beta(X_{1,T}^x+1) -\beta(X_{1,T}^x)}{V_1(X_{1,T}^x)}.
\end{align*}

\item Let evolve the process $(X^x_t)_{t\geq T}$ as a BDP$(\alpha,\beta)$ starting from $Z$.
\end{enumerate}
The coupling $(X_t^x,X_{1,t}^x,S_t)_{t\geq 0}$ satisfy to
\begin{align*}
X_t^{x}\mathbf{1}_{S_t=1}&= X_{1,t}^x\mathbf{1}_{S_t=1},& X_t^{x+1}\mathbf{1}_{S_t=1}&=(X_{1,t}^x+1)\mathbf{1}_{S_t=1},& \mathbb{P}(S_t=1|(X_{1,s}^x)_{0\leq s \leq t})&=e^{-\int_0^t{V_1(X_{1,s}^x)ds}}.
\end{align*}

This allows to find back the formula (\ref{formula_intertwining_sg_order1}):
\begin{eqnarray*}
\p P_t f(x)&=& \mathbb{E}[f(X_t^{x+1})-f(X_t^x)] =\E\left[(f(X_t^{x+1})-f(X_t^x))\1_{S_t=1}\right]\\
&=&\E\left[(f(X_{1,t}^x+1)-f(X_{1,t}^x))e^{-\int_0^t{V_1(X_{1,s}^x)ds}}\right]=P_{1,t}^{V_1}(\p f)(x).
\end{eqnarray*}

Similarly it is possible to construct a coupling $(X_t^{x+1},X_{*1,t}^{x+1},S_t)_{t\geq 0}$ such that $(X_{*1,t}^{x+1})_{t\geq 0}$ is a BDP$(\alpha,\bw{\b})$ starting from $x+1$ and satisfying to
\begin{align*}
X_t^{x+1}\mathbf{1}_{S_t=1}&=X_{*1,t}^{x+1}\mathbf{1}_{S_t=1},& X_t^{x}\mathbf{1}_{S_t=1}&=(X_{*1,t}^{x+1}-1)\mathbf{1}_{S_t=1},&\mathbb{P}(S_t=1|(X_{*1,s}^{x+1})_{0\leq s \leq t})=e^{-\int_0^t{V_{*1}(X_{*1,s}^{x+1})ds}},
\end{align*}
leading to the formula (\ref{formula_intertwining_order1_adjoint_gradient_sg}):
\begin{eqnarray*}
\p^* P_t f(x+1)&= &\mathbb{E}[f(X_t^{x})-f(X_t^{x+1})]= \mathbb{E}[(f(X_t^{x})-f(X_t^{x+1}))\1_{S_t=1}] \\
&=&\E\left[(f(X_{*1,t}^{x+1}-1)-f(X_{*1,t}^{x+1}))e^{-\int_0^t{V_{*1}(X_{*1,s}^{x+1})ds}}\right]=P_{*1,t}^{V_{*1}}(\p^*f)(x+1).\\
\end{eqnarray*}

It is interesting to remark that conversely, the intertwining formula (\ref{formula_intertwining_sg_order1}) can in certain cases yield a coupling between $(X_t^x)_{t\geq0}$ and $(X_t^{x+1})_{t\geq0}$. The proof of Lemma \ref{lemma_mehler_negbin} above is based on this idea.

\subsection{Proof of Theorems \ref{theorem_intertwining_order2_hard} and \ref{theorem_intertwining_order2_easy}}
\begin{proof}[Proof of Theorem \ref{theorem_intertwining_order2_hard}]
Let us begin by showing the following intertwining relation at the level of the generators:
\begin{equation*}
\p_{v}^*\p_u Lf=L_{u,*v} \p_{v}^*\p_u f - V_{u,*v} \p_{v}^*\p_u f.
\end{equation*}
By application of Theorem \ref{theorem_intertwining_order1} and Theorem \ref{theorem_intertwining_order1_adjoint_gradient} we find that
\begin{eqnarray*}
\p_v^*(\p_uLf)&=&\p_v^*(L_u(\p_u f)-V_u \p_uf )\\
&=&(L_u)_{*v}\p_v^*\p_u f-(V_u)_{*v}\p_v^*\p_u f + \p_v^*(-V_u \p_u f),
\end{eqnarray*}
where $(L_u)_{*v}$ and $(V_u)_{*v}$ stand for the generator, respectively the potential, obtained by intertwining the BDP$(\a_u,\b_u)$ and the $\p_{*v}$ gradient.
The generator $(L_{u})_{*v}$ is the generator of a BDP$((\a_u)_{*v},(\b_u)_{*v})$ such that for all $x\in \N$,
\begin{eqnarray*}
(\a_u)_{*v}(x)&=&\frac{v(x+1)}{v(x)}\a_u(x)=\frac{v(x+1)}{v(x)}\frac{u(x+1)}{u(x)}\a(x+1)\\
(\b_u)_{*v}(x)&=&\frac{v(x-1)}{v(x)}\b_u(x-1)=\frac{v(x-1)}{v(x)}\frac{u(x-2)}{u(x-1)}\b(x-1)\1_{x\in \N^*}.
\end{eqnarray*}
The potential $(V_u)_{*v}$ writes on $\N$
\begin{eqnarray*}
(V_u)_{*v}(x)&=&\a_u(x-1)\1_{x\in \N^*}-(\a_u)_{*v}(x)+\b_u(x)-(\b_u)_{*v}(x).
\end{eqnarray*}
The next step is to rewrite the expression $\p_v^*(-V_u \p_u f)$ in terms of $\p_v^*\p_u f$. Let us denote $g=\p_u f$ in the following lines. For every $x \in \N^*$, $\p_u^*(fg)(x)=f(x)\p_u^* g(x)+\p_u^* f(x)g(x-1)$ and $f(x)=-\sum_{k=0}^x{u(k)\p_u^*f(k)}$ so that
\begin{eqnarray*}
\p_v^*(-V_u g)(x)&=&-V_u(x)\p_v^*g(x)-\p_v^*V_u(x)g(x-1)\\
&=&-V_u(x)\p_v^*g(x)+\p_v^*V_u(x)\sum_{k=0}^{x-1}{v(k)\p_v^* g (k)}\\
&=&\p_v^*V_u(x)\sum_{k=0}^{x-1}{v(k)(\p_v^* g (k)-\p_v^* g (x))}-\left(V_u(x)-\left(\sum_{k=0}^{x-1}{v(k)}\right)\p_v^*V_u(x)\right)\p_v^*g(x).
\end{eqnarray*}
Besides, $\p_v^*(-V_u g)(0)=\frac{1}{v_0}V_u(0)g(0)=-V_u(0)\p_v^*g(0)$. We do indeed find $\p_v^*\p_u L=(L_{u,*v}-V_{u,*v})\p_v^*\p_u$ with
\begin{eqnarray*}
L_{u,*v}f(x)&=&(L_u)_{*v}f(x)+\p_v^* V_u(x)v(x-1)(f(x-1)-f(x))\\
&&+\p_v^* V_u(x)\left(\sum_{j=0}^{x-2}{v(j)}\right)\sum_{k=0}^{x-2}{\frac{v(k)}{\left(\sum_{j=0}^{x-2}{v(j)}\right)}(f(k)-f(x))}\\
V_{u,*v}(x)&=&(V_u)_{*v}(x)+V_u(x)-\left(\sum_{k=0}^{x-1}{v(k)}\right)\p_v^*V_u(x).
\end{eqnarray*}
The generator $L_{u,*v}$ has a birth-death component and a component making the process at point $x$ jumping on the set $\left\{0,\dots ,x-2 \right\}$. The birth rates are $\a_{u,*v}=(\a_u)_{*v}$. The death rates come from $(L_u)_{*v}$ and from the term $\p_v^* V_u(x)v(x-1)(f(x-1)-f(x))$, so that
$$\b_{u,*v}(x)=(\b_u)_{*v}+\p_v^* V_u(x)v(x-1)\1_{x \in \N^*}.$$
Remembering that $V_u(x)=\a(x)-\a_u(x)+\b(x+1)-\b_u(x)$ we get that for all positive integer $x$
\begin{eqnarray*}
(V_u)_{*v}(x)+V_u(x)&=&\a(x)+\a_u(x-1)-(\a_u(x)+(\a_u)_{*v}(x))+\b(x+1)-(\b_u)_{*v}(x)\\
&=&\left(1+\frac{u(x)}{u(x-1)}\right)\a(x)-\left(1+\frac{v(x+1)}{v(x)}\right)\frac{u(x+1)}{u(x)}\a(x+1)\\
&&+\b(x+1)-\frac{v(x-1)}{v(x)}\frac{u(x-2)}{u(x-1)}\b(x-1),
\end{eqnarray*}
and
\begin{eqnarray*}
(V_u)_{*v}(0)+V_u(0)&=&-(\a_u)_{*v}(0)+V_u(0)=\a(0)-(\a_u(0)+(\a_u)_{*v}(0))+\b(1)\\
&=&\a(0)-\left(1+\frac{v(1)}{v(0)}\right)\frac{u(1)}{u(0)}\a(1)+\b(1).
\end{eqnarray*}
The same reasoning as in the proof of Theorem \ref{theorem_intertwining_order1_adjoint_gradient} allows to deduce the relation at the level of the semigroups from the relation at the level of the generators, provided that we can show that for all $t\geq 0$ the function $\p_v^*\p_u P_t f$ is bounded on $\N$. It is the case; indeed, by Theorem \ref{theorem_intertwining_order1}, $\p_u P_t f=P_{u,t}^{V_u}\,{\p_u f}$ is bounded and $\p_v^*|\p_u P_t f|\leq \frac{2}{\inf_{x\in \N}{v(x)}}|P_{u,t}^{V_u}\,{\p_u f}|$.
\end{proof}

\begin{proof}[Proof of Theorem \ref{theorem_intertwining_order2_easy}]
Surprisingly, Theorem \ref{theorem_intertwining_order2_easy} cannot be deduced from Theorem  \ref{theorem_intertwining_order2_hard} when $u \neq 1$. However, its proof goes along the same lines as the proof of Theorem \ref{theorem_intertwining_order2_hard}, only easier because $\p_v\p_u (V_u \p_u f)=V_u \p_v\p_u f$, so that the intertwining relation at the level of the generators follows directly.
\end{proof}

\section{Proofs of Section \ref{sect:stein}}
\label{sect:proof_stein}

The semigroup representation \eqref{eq:stein_sg_representation} of the solution of Stein's equation $g_f$ can be rewritten as:
\begin{eqnarray}
\fw{g_f}&=& - u \int_0^\infty \p_u P_tf dt, \label{eq:sol_stein_0}\\
\p g_f&=& u \int_0^\infty \p_u \p^* P_tf dt,\label{eq:sol_stein_1}\\
\Fw{\p g_f}&=& - u \int_0^\infty \p_u \p P_tf dt.\label{eq:sol_stein_1bis}
\end{eqnarray}

The left-hand side of an intertwining relation between a weighted gradient and a birth-death semigroup appears under the integral. This fact suggests to apply the intertwining relations shown previously. However, it leads to sharper results to first identify the function $f\in \mathcal{F}$ that realizes the maximum in the pointwise Stein's factors
$$
\max_{f \in \mathcal{F}} | g_f(i)|, \quad \max_{f \in \mathcal{F}} | \partial g_f(i)|,
$$ 
for every $i\in \N$. This first step is based on Lemma \ref{lemma:bx1} and Lemma \ref{lemma:bx2} below. Indeed, Lemma \ref{lemma:bx1} gives an alternative formulation of the solution of Stein's equation.


\begin{lemma}[{\cite[Lemma 2.3]{brown2001stein}}]
\label{lemma:bx1}
For all $i \in \N$, let us define $g_j:=g_{\1_j}$ and
\begin{align*}
e_{i}^+&=\frac{1}{\a(i) \pi(i)}\sum_{k=0}^i\pi(k),\quad i \in \N & e_i^-&=\frac{1}{\b(i) \pi(i)}\sum_{k=i}^\infty\pi(k),\quad i \in \N^*.
\end{align*}
Then, for all $i\in \N^*,j\in \N$,
\begin{align}
g_j(i)&=\pi(j) (-e_{i-1}^+\1_{i\leq j}+e_i^-\1_{i \geq j+1})\label{eq:sol_stein_0_bx}\\
\p g_j(i) &=\pi(j)\left((e_{i-1}^+-e_i^+)\1_{j \geq i+1}+(e_{i+1}^-+e_{i-1}^+)\1_{i=j}+(e_{i+1}^--e_i^-)\1_{j \leq i-1}\right).\label{eq:sol_stein_1_bx}
\end{align}
\end{lemma}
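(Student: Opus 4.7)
The plan is to solve Stein's equation explicitly when $f=\1_j$, using the reversibility of $\pi$. Stein's equation reads
\[
\alpha(x)\, g_j(x+1)-\beta(x)\, g_j(x) \;=\; \1_{\{x=j\}}-\pi(j), \qquad x\in \N.
\]
Multiplying by $\pi(x)$ and invoking the detailed-balance identity $\pi(x)\alpha(x)=\pi(x+1)\beta(x+1)$, this becomes the telescoping relation
\[
\pi(x+1)\beta(x+1)\,g_j(x+1)-\pi(x)\beta(x)\,g_j(x)\;=\;\pi(x)\,\1_{\{x=j\}}-\pi(x)\pi(j).
\]
Since $\beta(0)=0$, the boundary term at $x=0$ vanishes and summation from $x=0$ to $x=i-1$ yields, for every $i\in \N^*$,
\[
\pi(i)\beta(i)\,g_j(i)\;=\;\pi(j)\,\1_{\{j\leq i-1\}}-\pi(j)\sum_{k=0}^{i-1}\pi(k).
\]

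I would then split according to the position of $i$ relative to $j$. For $i\leq j$ the indicator vanishes, giving $g_j(i)=-\pi(j)(\pi(i)\beta(i))^{-1}\sum_{k=0}^{i-1}\pi(k)$; for $i\geq j+1$ the indicator contributes $\pi(j)$ and, using $\sum_{k\geq 0}\pi(k)=1$ to replace $1-\sum_{k=0}^{i-1}\pi(k)$ by $\sum_{k=i}^\infty\pi(k)$, one gets $g_j(i)=\pi(j)(\pi(i)\beta(i))^{-1}\sum_{k=i}^{\infty}\pi(k)$. A final application of detailed balance in the form $\pi(i)\beta(i)=\pi(i-1)\alpha(i-1)$ identifies the prefactor in the first expression with $e_{i-1}^{+}$, whereas the second expression equals $\pi(j)\,e_i^{-}$ by definition; this is precisely formula \eqref{eq:sol_stein_0_bx}.

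Formula \eqref{eq:sol_stein_1_bx} then follows by computing $\partial g_j(i)=g_j(i+1)-g_j(i)$ through a three-case analysis on the ordering of $i$ and $j$. If $j\geq i+1$, both values belong to the $e^{+}$ branch and one obtains $\pi(j)(e_{i-1}^{+}-e_i^{+})$; if $j\leq i-1$, both values belong to the $e^{-}$ branch and one obtains $\pi(j)(e_{i+1}^{-}-e_i^{-})$; for the intermediate case $j=i$ one picks up $-\pi(j)\,e_{i-1}^{+}$ from $g_j(i)$ and $\pi(j)\,e_{i+1}^{-}$ from $g_j(i+1)$, producing the diagonal term $\pi(j)(e_{i+1}^{-}+e_{i-1}^{+})$. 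No step is conceptually difficult; the only care required is to track strict versus non-strict inequalities at the threshold $j=i$, which is precisely where the two branches of \eqref{eq:sol_stein_0_bx} meet.
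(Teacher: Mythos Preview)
Your proof is correct. The paper does not actually prove this lemma; it is simply quoted from \cite[Lemma 2.3]{brown2001stein} and used as a tool, so there is no in-paper argument to compare against. Your derivation---multiplying Stein's equation by $\pi(x)$, using detailed balance to turn it into a telescoping identity, summing, and then splitting cases---is the standard direct route and works cleanly here.
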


\begin{lemma}[{\cite[Lemma 2.4]{brown2001stein}}]
\label{lemma:bx2}
If $V_1\geq 0$ then $(e_i^+)$ is non-decreasing and $(e_i^-)$ is non-increasing.
\end{lemma}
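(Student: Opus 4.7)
The plan is to reduce both monotonicities to a single scalar inequality and dispatch them with one identity arising from reversibility. First I would introduce $\phi := \alpha - \beta$; the hypothesis $V_1 = \phi - \fw{\phi} \geq 0$ becomes simply the statement that $\phi$ is non-increasing on $\N$. Setting $S_i := \sum_{k=0}^i \pi(k)$ and $T_i := \sum_{k\geq i} \pi(k)$, and using reversibility $\alpha(k)\pi(k)=\beta(k+1)\pi(k+1)$ to clear common denominators, a routine calculation shows that $e_{i+1}^+ - e_i^+$ is a positive multiple of $\alpha(i)\pi(i) - \phi(i+1)\, S_i$ and that $e_i^- - e_{i+1}^-$ is a positive multiple of $\alpha(i)\pi(i) + \phi(i)\, T_{i+1}$. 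The lemma is therefore equivalent to
\[
\phi(i+1)\, S_i \leq \alpha(i)\,\pi(i) \qquad\text{and}\qquad -\phi(i)\, T_{i+1} \leq \alpha(i)\,\pi(i),
\]
both of which are trivial whenever the sign of $\phi$ makes the left side non-positive.

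The key step will be the telescoping identity
\[
\sum_{k=0}^{i} \phi(k)\, \pi(k) \;=\; \alpha(i)\, \pi(i),
\]
which follows in one line by summing reversibility over $\{0,\dots,i\}$ and using $\beta(0)=0$. For the forward inequality, in the remaining regime $\phi(i+1) > 0$, the monotonicity of $\phi$ yields $\phi(k) \geq \phi(i+1)$ for every $k\leq i$, so $\alpha(i)\pi(i) = \sum_{k=0}^i \phi(k)\pi(k) \geq \phi(i+1)\, S_i$, as required. For the backward inequality I would write the same identity on the truncated window $[i+1,N]$,
\[
\sum_{k=i+1}^N \phi(k)\,\pi(k) \;=\; \alpha(N)\,\pi(N) - \alpha(i)\,\pi(i);
\]
then in the regime $\phi(i)<0$, monotonicity gives $\phi(k)\leq \phi(i)<0$ for $k\geq i$, whence $\alpha(i)\pi(i) = \alpha(N)\pi(N) + \sum_{k=i+1}^N(-\phi(k))\pi(k) \geq -\phi(i)\sum_{k=i+1}^N \pi(k)$, and letting $N\to\infty$ delivers $\alpha(i)\pi(i) \geq -\phi(i)\,T_{i+1}$.

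I do not expect a serious obstacle: once the reformulation and the telescoping identity are in hand, the arithmetic is immediate, and the two assertions of the lemma come out symmetrically. The only point requiring care is the passage to the infinite sum in the backward claim, which I handle via the truncated identity at level $N$ together with the trivial lower bound $\alpha(N)\pi(N)\geq 0$, thereby sidestepping any summability concerns for $\sum_k \phi(k)\pi(k)$.
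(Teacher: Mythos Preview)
The paper does not supply its own proof of this lemma; it is quoted verbatim as \cite[Lemma~2.4]{brown2001stein} and used as a black box. Your argument is correct and self-contained. The identification $V_1=\phi-\fw{\phi}$ with $\phi=\alpha-\beta$ is right, and after clearing denominators via reversibility one indeed finds that $e_{i+1}^+-e_i^+$ and $e_i^--e_{i+1}^-$ have the same sign as $\alpha(i)\pi(i)-\phi(i+1)S_i$ and $\alpha(i)\pi(i)+\phi(i)T_{i+1}$ respectively. The telescoping identity $\sum_{k=0}^{i}\phi(k)\pi(k)=\alpha(i)\pi(i)$ (which is just $\sum_{k\leq i}(\alpha(k)\pi(k)-\beta(k)\pi(k))=\beta(i+1)\pi(i+1)$ from reversibility and $\beta(0)=0$) then does all the work, with the sign dichotomy on $\phi(i+1)$ and $\phi(i)$ making the remaining cases trivial. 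The truncation at level $N$ for the backward inequality, discarding the non-negative term $\alpha(N)\pi(N)$, is the right way to sidestep summability of $\sum_k\phi(k)\pi(k)$; nothing is missing there.

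Since there is no in-paper proof to compare against, there is no methodological contrast to draw. Your approach is clean and would serve as a perfectly good replacement for the citation.
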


%
%
%
%

\subsection{Approximation in total variation distance}

We begin by describing the argmax of the pointwise quantities. To the knowledge of the authors, equation \eqref{eq:argmax_bdd} is not explicitly stated in preceding works. Equation \eqref{eq:argmax_derivative_bdd} is proved in \cite{brown2001stein}. We briefly recall the arguments used for the sake of completeness.

\begin{lemma}[Argmax of the pointwise Stein's factor]
\label{lemma:argmaxtv}
For all $i \in \N$,
\begin{eqnarray}
g_{\1_{[0,i]}}(i) =\sup_{0\leq f\leq 1}{\Fw{g_f}(i)}.
\label{eq:argmax_bdd}
\end{eqnarray}

Moreover if $V_1\geq 0$, then for all $i\in \N^*$
\begin{eqnarray}
\p g_{\1_i} (i) &= &\underset{0\leq f \leq 1}{\max}\, |\p g_f (i)|.\label{eq:argmax_derivative_bdd}
\end{eqnarray}
\end{lemma}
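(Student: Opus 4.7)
The plan is to exploit two facts: that the map $f\mapsto g_f$ is linear (up to the constant of integration $g_f(0)$, which is irrelevant since we work on $\N^*$), and that Lemma \ref{lemma:bx1} gives us an explicit pointwise decomposition of $g_f$ and $\p g_f$ against the basis $(\1_j)_{j\in \N}$. Combining linearity with the sign analysis of the coefficients will allow us to identify the extremizer explicitly.

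For the first equation, I would start from $g_f=\sum_{j\in\N}f(j)\,g_{\1_j}$ on $\N^*$, which follows because $S$ is linear and $\pi$ is a linear functional. Substituting the expression \eqref{eq:sol_stein_0_bx} into this sum and evaluating at the relevant point yields a representation of the form
\begin{equation*}
g_f(m) \;=\; e_{m}^{-}\sum_{j<m} f(j)\pi(j)\;-\;e_{m-1}^{+}\sum_{j\ge m} f(j)\pi(j),\qquad m\in\N^*,
\end{equation*}
in which both coefficients $e_m^-$ and $e_{m-1}^+$ are non-negative by their very definitions (as sums of $\pi$-masses divided by positive quantities). Consequently, to maximize $g_f(m)$ subject to $0\le f\le 1$, one sets $f(j)=1$ on $\{j<m\}$ and $f(j)=0$ on $\{j\ge m\}$, so that the argmax is the indicator function of the appropriate initial segment. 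Reading this with the shift convention $\Fw{g_f}(i)=g_f(i+1)$ gives the claimed identity.

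For the second equation, I would again decompose $\p g_f(i)=\sum_j f(j)\,\p g_{\1_j}(i)$ and invoke \eqref{eq:sol_stein_1_bx}, which expresses the coefficient of $f(j)\pi(j)$ as $(e_{i-1}^+-e_i^+)\1_{j\ge i+1}+(e_{i+1}^-+e_{i-1}^+)\1_{j=i}+(e_{i+1}^--e_i^-)\1_{j\le i-1}$. Here the hypothesis $V_1\ge 0$ enters through Lemma \ref{lemma:bx2}: it forces $e_{i-1}^+-e_i^+\le 0$ and $e_{i+1}^--e_i^-\le 0$, while the middle coefficient $e_{i+1}^-+e_{i-1}^+$ is manifestly non-negative. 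Thus only the term at $j=i$ contributes positively, and to maximize $\p g_f(i)$ one chooses $f=\1_i$; to minimize it one chooses $f=1-\1_i$, but using that $g_{\mathbf{1}}\equiv 0$ (since constants lie in the kernel of $S$ after centering) one has $\p g_{1-\1_i}(i)=-\p g_{\1_i}(i)$, so the extreme values are symmetric and $|\p g_f(i)|\le \p g_{\1_i}(i)$ for every $0\le f\le 1$.

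The only non-routine point is justifying the linearity $g_f=\sum_j f(j) g_{\1_j}$ when $f$ has infinite support: one has to check termwise absolute summability at each $i\in\N^*$, which follows from \eqref{eq:sol_stein_0_bx} and the fact that $\sum_j\pi(j)<\infty$. Beyond this, everything reduces to reading off the signs of the coefficients in Lemma \ref{lemma:bx1}, with Lemma \ref{lemma:bx2} being the crucial ingredient that makes the sign pattern in the second identity work.
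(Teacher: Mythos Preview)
Your proposal is correct and follows essentially the same route as the paper: both arguments expand $g_f$ and $\p g_f$ linearly in terms of $g_{\1_j}$ via Lemma~\ref{lemma:bx1}, read off the signs of the coefficients (using Lemma~\ref{lemma:bx2} for the second identity), and conclude by choosing $f$ to pick out the positive terms. Your treatment of the absolute value via $g_{\mathbf{1}}=0$ is equivalent to the paper's ``replace $f$ by $1-f$'' remark, and your explicit check of summability is a welcome extra care that the paper omits.
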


\begin{proof}
By replacing $f$ by $1-f$ if necessary
$$\underset{0\leq f \leq 1}{\sup}\, | g_f (i)|=\underset{0\leq f \leq 1}{\sup}\,  g_f (i),\quad \underset{0\leq f \leq 1}{\sup}\, |\p g_f (i)|=\underset{0\leq f \leq 1}{\sup}\, \p g_f (i).$$
By Lemma \ref{lemma:bx1},
\begin{align*}
g_f(i+1)&= e_{i+1}^-\sum_{j=0}^{i}{\pi(j)f(j)}-e_{i}^+\sum_{j=i+1}^\infty{\pi(j)f(j)} \leq e_{i+1}^-\sum_{j=0}^{i}{f(j)},
\end{align*}
with equality for $f=\1_{[0,i]}$ which proves \eqref{eq:argmax_bdd}. On the other hand,
$$\p g_j(i)=\pi_j\left((e_{i-1}^+-e_i^+)\1_{i \leq j-1}+(e_{i+1}^-+e_{i-1}^+)\1_{i=j}+(e_{i+1}^--e_i^-)\1_{i \geq j+1}\right),$$
so by Lemma \ref{lemma:bx2} the quantity $\p g_j(i)$ is non-negative if and only if $i=j$. Hence, if $f$ is a function on $\N$ with values in $[0,1]$,
$$\p g_f(i)=\sum_{j=0}^\infty{f(j)\p g_j(i)}\leq \p g_i(i),$$
and there is equality if $f=\1_i$. This shows \eqref{eq:argmax_derivative_bdd}.
\end{proof}

As a consequence, we have the following lemma of which Theorem \ref{thm:stein_factor_tv_order1} is a direct application.

\begin{lemma}[Pointwise first Stein's factor for bounded functions]
\label{thm:stein_factor_tv_0}
If $V_u$ is bounded from below by $\sigma(u)$ then for all $i\in \N$,
\begin{eqnarray*}
\sup_{0\leq f\leq 1}{|g_f(i+1)|}\leq \int_0^\infty{e^{-\sigma(u)t}\P(X_{u,t}^i=i)dt}.
\end{eqnarray*}
Moreover if $V_u$ is constant then the preceding inequality is in fact an equality.
\end{lemma}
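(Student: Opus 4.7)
The plan is to combine the pointwise argmax result of Lemma \ref{lemma:argmaxtv} with the semigroup representation \eqref{eq:sol_stein_0} and the first order intertwining of Theorem \ref{theorem_intertwining_order1}. By \eqref{eq:argmax_bdd}, evaluated at $i+1$ instead of $i$, one has
$$\sup_{0\leq f\leq 1}g_f(i+1)=g_{\1_{[0,i]}}(i+1),$$
and a standard replacement of $f$ by $1-f$ (as in the proof of Lemma \ref{lemma:argmaxtv}) shows that this sup coincides with $\sup_{0\leq f\leq 1}|g_f(i+1)|$. So the task reduces to estimating the single quantity $g_{\1_{[0,i]}}(i+1)=\overrightarrow{g_{\1_{[0,i]}}}(i)$ from above.

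First I would rewrite this quantity through the semigroup representation \eqref{eq:sol_stein_0}:
$$\overrightarrow{g_{\1_{[0,i]}}}(i)=-u(i)\int_0^\infty \p_u P_t\1_{[0,i]}(i)\,dt,$$
then plug in the intertwining $\p_u P_t=P_{u,t}^{V_u}\p_u$ to obtain
$$\overrightarrow{g_{\1_{[0,i]}}}(i)=-u(i)\int_0^\infty \E\!\left[\p_u\1_{[0,i]}(X_{u,t}^i)\,e^{-\int_0^t V_u(X_{u,s}^i)\,ds}\right]dt.$$
The key observation, which makes the argument work so cleanly, is that $\p_u\1_{[0,i]}$ has a single nonzero value, namely $-1/u(i)$ attained at $x=i$; hence $\p_u\1_{[0,i]}(X_{u,t}^i)=-\tfrac{1}{u(i)}\mathbf{1}_{\{X_{u,t}^i=i\}}$. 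Substituting this and cancelling the factor $u(i)$ leads to the exact formula
$$\overrightarrow{g_{\1_{[0,i]}}}(i)=\int_0^\infty \E\!\left[\mathbf{1}_{\{X_{u,t}^i=i\}}\,e^{-\int_0^t V_u(X_{u,s}^i)\,ds}\right]dt.$$

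To conclude, bound the exponential by $e^{-\sigma(u)t}$ using $V_u\geq \sigma(u)$, which yields the announced inequality. When $V_u\equiv\sigma(u)$ the bound on the exponential is an equality, so the whole chain of manipulations remains sharp and the upper bound becomes an equality; this establishes the equality statement. The argument has no real obstacle beyond the careful bookkeeping of the weight $u$ when identifying $\p_u \1_{[0,i]}$ and cancelling it against the prefactor $u(i)$ coming from \eqref{eq:sol_stein_0}; once this is done, the positivity of the resulting integrand automatically takes care of the absolute value in the statement.
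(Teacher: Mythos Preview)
Your proof is correct and follows essentially the same route as the paper: identify the argmax via Lemma \ref{lemma:argmaxtv}, apply the semigroup representation \eqref{eq:sol_stein_0} together with the first order intertwining of Theorem \ref{theorem_intertwining_order1}, and use the computation $\p_u\1_{[0,i]}=-\tfrac{1}{u(i)}\1_i$ to reduce to the Feynman--Kac expectation of an indicator. Your treatment of the equality case is slightly more explicit than the paper's, but the argument is otherwise identical.
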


\begin{proof}
By the equation \eqref{eq:sol_stein_0}, Theorem \ref{theorem_intertwining_order1} and Lemma \ref{lemma:argmaxtv}, and because $\p_u \1_{[0,i]}=-\frac{1}{u(i)}\1_i$, we have for all function $f$ such that $0\leq f \leq 1$
\begin{align*}
|g_f(i+1)|&\leq g_{\1_{[0,i]}}(i+1)&=-u(i)\int_0^\infty{P_{u,t}^{V_u}(\p_u \1_{[0,i]})}=\int_0^\infty{P_{u,t}^{V_u}(\1_i)}\leq \int_0^\infty{e^{-\sigma(u)t}\P(X_{u,t}^i=i)dt}.
\end{align*}
\end{proof}

We now state results for the second pointwise Stein factor. 

\begin{lemma}[Pointwise second Stein's factor for bounded functions]
\label{lem:stein_factor_tv}

\begin{itemize}
\item[]

\item Under $\mathbf{H_1}$, for all integer $i\in \N^*$, the quantity $\sup_{0\leq f \leq 1}|\p g_f(i)|$ is bounded by
\begin{eqnarray*}
\int_0^\infty{e^{-\sigma(1,*u)t}\left(-\frac{u(i)}{u(i-1)}\P(X_{1,*u,t}^i=i-1)+2\P(X_{1,*u,t}^i=i)-\frac{u(i)}{u(i+1)}\P(X_{1,*u,t}^i=i+1)\right)dt}.
\end{eqnarray*}

\item Under $\mathbf{H_2}$, for all integer $i\in \N$, the quantity $\sup_{0\leq f \leq 1}|\p g_f(i+1)|$ is bounded by
\begin{eqnarray*}
\int_0^\infty e^{-\sigma(1,u)t} \left( -\frac{u(i)}{u(i-1)}\P(X_{1,u,t}^i=i-1) + 2\P(X_{1,u,t}^i=i)-\frac{u(i)}{u(i+1)}\P(X_{1,u,t}^i=i+1)\right)dt.
\end{eqnarray*}

\end{itemize}
Moreover, if the potential $V_{1,*u}$ (respectively $V_{1,u}$) is constant, then the first (respectively the second) upper bound is in fact an equality.
\end{lemma}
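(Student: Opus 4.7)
The proof runs the three-step scheme outlined at the beginning of Section \ref{sect:stein-gen} in parallel under $\mathbf{H_1}$ and $\mathbf{H_2}$. First, Lemma \ref{lemma:argmaxtv} (whose hypothesis $V_1\ge 0$ is built into both sets of assumptions) collapses the supremum: $\sup_{0\le f\le 1}|\p g_f(i)|=\p g_{\1_i}(i)$, which is in particular non-negative. Plugging $f=\1_i$ into the semigroup representation \eqref{eq:sol_stein_1} (respectively \eqref{eq:sol_stein_1bis} under $\mathbf{H_2}$, where the relevant argument is $i+1$ rather than $i$), and noting that $\p\p^{*}=\p^{*}\p$ on $\N^{*}$, gives
\begin{equation*}
\p g_{\1_i}(i)=\int_0^\infty(\p^{*}\p\, P_t\1_i)(i)\,dt.
\end{equation*}

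Second, I would apply the second-order intertwining to push the discrete Laplacian through $P_t$. Under $\mathbf{H_1}$, Theorem \ref{theorem_intertwining_order2_hard} with weights $(1,u)$ yields $\p_u^{*}\p\, P_t=P_{1,*u,t}^{V_{1,*u}}(\p_u^{*}\p\,\cdot)$; multiplying by $u(i)$ at $x=i$ and using the explicit formula
\begin{equation*}
\p_u^{*}\p\,\1_i(x)=\frac{1}{u(x)}\bigl(2\,\1_{x=i}-\1_{x=i-1}-\1_{x=i+1}\bigr),
\end{equation*}
we obtain
\begin{equation*}
(\p^{*}\p\, P_t\1_i)(i)=\E\!\left[\frac{u(i)}{u(X_t)}\bigl(2\,\1_{X_t=i}-\1_{X_t=i-1}-\1_{X_t=i+1}\bigr)\,e^{-\int_0^t V_{1,*u}(X_s)\,ds}\right]
\end{equation*}
with $X_t=X_{1,*u,t}^i$. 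Under $\mathbf{H_2}$ the analogous computation is carried out via Theorem \ref{theorem_intertwining_order2_easy}, with the weighted gradient $\p_u\p$ in place of $\p_u^{*}\p$ and the birth-death process $X_{1,u,t}^i$ in place of $X_{1,*u,t}^i$.

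Third, extract the factor $e^{-\sigma(1,*u)t}$ (respectively $e^{-\sigma(1,u)t}$) from the Feynman-Kac exponential using the lower bound on the potential, split the expectation over $\{X_t=i-1\},\{X_t=i\},\{X_t=i+1\}$, and integrate in $t$: once the coefficients $u(i)/u(i\pm 1)$ are collected, this produces precisely the announced signed bound. The "moreover" claim is then automatic, since in the constant-potential case $P_{1,*u,t}^{V_{1,*u}}=e^{-\sigma(1,*u)t}P_{1,*u,t}$ and every intermediate step is an equality.

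I expect the main obstacle to lie in that third step. Because $\p_u^{*}\p\,\1_i$ is signed, a naive term-by-term application of $e^{-\int V}\le e^{-\sigma t}$ only controls the positive contribution at $X_t=i$ and pushes the two negative contributions at $X_t=i\pm 1$ in the wrong direction, which yields only an absolute-value bound of the form appearing in Theorems \ref{thm:stein_factor_tv_order2} and \ref{thm:stein_factor_tv_order2_bis}. Recovering the sharper signed bound stated in the lemma requires handling the three indicator contributions jointly, exploiting the killed-process interpretation of $P_{1,*u,t}^{V_{1,*u}}$ (cf.\ \eqref{eq:gen_killed} and Remark \ref{rq:ajoutdepoint}) together with the a priori non-negativity of the time integral provided by Step 1.
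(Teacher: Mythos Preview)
Your three-step plan matches the paper's proof essentially line for line: the paper also invokes Lemma \ref{lemma:argmaxtv} to reduce to $f_i=\1_i$, uses \eqref{eq:sol_stein_1} together with Theorem \ref{theorem_intertwining_order2_hard} to write $\p g_{f_i}(i)=u(i)\int_0^\infty P_{1,*u,t}^{V_{1,*u}}(\p_u^*\p f_i)(i)\,dt$, and then replaces the Feynman-Kac weight by $e^{-\sigma(1,*u)t}$ before plugging in the explicit formula $\p_u^*\p\,\1_i=-\tfrac{1}{u(i-1)}\1_{i-1}+\tfrac{2}{u(i)}\1_i-\tfrac{1}{u(i+1)}\1_{i+1}$ (and similarly under $\mathbf{H_2}$ via \eqref{eq:sol_stein_1bis} and Theorem \ref{theorem_intertwining_order2_easy}).

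On the point you single out as the main obstacle: the paper performs precisely that replacement in one line, without any separate treatment of the signed integrand or ``joint handling'' of the three indicator contributions; it simply writes the inequality. So your plan already reproduces the paper's argument in full, and the subtlety you raise is not resolved by any additional device in the original proof.
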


\begin{proof}
For every positive integer $i$, let $f_i=\1_i$. By the equation \eqref{eq:sol_stein_1}, Theorem \ref{theorem_intertwining_order2_hard} and Lemma \ref{lemma:argmaxtv}, under $\mathbf{H_1}$,
$$\sup_{0\leq f\leq 1}|\p g_{f}(i)|=\p g_{f_i}(i)=u(i)\p_u g_{f_i}(i)\leq u(i)\int_0^\infty{e^{-\sigma(1,*u)t} \mathbb{E}\left[\p_u^* \p f_i(\tilde{X}_t^i)\right] dt}.$$
As $\p_u^* \p f_i=-\frac{1}{u(i-1)}\1_{i-1}+2\frac{1}{u(i)}\1_{i}-\frac{1}{u(i+1)}\1_{i+1}$, we get the announced inequality.

Similarly the result under $\mathbf{H_2}$ derives from the equation \eqref{eq:sol_stein_1bis}, Theorem \ref{theorem_intertwining_order2_easy}, Lemma \ref{lemma:argmaxtv} and the computation $-\p_u \p f_{i+1}=-\frac{1}{u(i-1)}\1_{i-1}+2\frac{1}{u(i)}\1_{i}-\frac{1}{u(i+1)}\1_{i+1}$. 
\end{proof}

Theorem \ref{thm:stein_factor_tv_order2} and \ref{thm:stein_factor_tv_order2_bis} are direct consequences of Lemma \ref{lem:stein_factor_tv}.

\subsection{Approximation in Wasserstein distance.}

In contrast with the first order in total variation distance, the bound of Theorem \ref{thm:stein_factor_wasserstein_order1} does not require a preliminary bound on pointwise Stein's factor. 

\begin{proof}[Proof of Theorem \ref{thm:stein_factor_wasserstein_order1}] 
By Theorem \ref{theorem_intertwining_order1},
\begin{eqnarray*}
 \left|\frac{1}{u}g_f(\cdot+1)\right|&=& |\p_u h_{f}| =\left| \int_0^\infty \p_u  P_t f dt \right|\\
 &=&\left| \int_0^\infty   \E\left[e^{-\int_0^t V_u(X_u,s)ds}\p_u f(X_{u,t})\right] dt \right| \leq \frac{1}{\sigma(u)} \Vert \partial_u f\Vert_\infty.
\end{eqnarray*}
Now to prove the sharpness if $V_u$ is constant, it is enough to consider the map $f:x\mapsto \nolinebreak -\sum_{k=1}^x u(x- \nolinebreak 1)$ for which the previous inequalities are in fact equalities.
\end{proof}

\begin{remark}[Variant of Theorem \ref{thm:stein_factor_wasserstein_order1}]
We can also derive an upper bound for 
$$\sup_{f\in \lip(d_u)} \Vert {g_f} /u \Vert_\infty,$$ under the condition that $V_{*u}$ is bounded by below, by using alternatively to equation \eqref{eq:sol_stein_0} the equation
\begin{align*}
{g_f}&= - u \int_0^\infty \p_u^* P_tf dt,
\end{align*}
and Theorem \ref{theorem_intertwining_order1_adjoint_gradient} instead of Theorem \ref{theorem_intertwining_order1}.
\end{remark}


For the second Stein factor, we begin by focusing on the pointwise quantity $\sup_{f\in \mathcal{F}}\p_u g_f(i)$. For all $i\in \N$, let us introduce two functions $\psi_i$ and $\Psi_i$ defined for all $j \in \N$ as
\begin{eqnarray*}
\psi_i(j)&=&\left(1-\frac{u(j-1)}{u(j)}\right)\1_{ j \leq i-1 }+\left(1+\frac{u(j-1)}{u(j)}\right)\1_{j=i}+\left(\frac{u(j-1)}{u(j)}-1\right)\1_{j \geq i+1}\\
\Psi_i(j)&=&\left(1-\frac{u(j+1)}{u(j)}\right)\1_{ j \leq i-1 }+\left(1+\frac{u(j+1)}{u(j)}\right)\1_{j=i}+\left(\frac{u(j+1)}{u(j)}-1\right)\1_{j \geq i+1}.
\end{eqnarray*}

The following lemma allows to determine the functions that realize the supremum in the second pointwise Stein factor. This lemma is a generalization of a lemma of \cite{barbour2006stein}, which addressed the case where $u=1$ and $(\a(x),\b(x))_{x \in \mathbb{N}}=(\lambda,x)_{x \in \mathbb{N}}$. Its proof depends on the already cited results of \cite{brown2001stein}.

\begin{lemma}[Argmax of the pointwise Stein's factor]
\label{lemma:argmax}
If $V_1\geq 0$, then for all $i\in \N^*$
\begin{align}
\p g_{\varphi_i} &= \underset{f\in \lip(d_u)}{\max}\, |\p g_f (i)|,& \varphi_i&=-d_u(i,\cdot).
\label{eq:argmax_derivative_lip}
\end{align}
\end{lemma}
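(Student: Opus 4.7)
My approach would combine the explicit representation of $\p g_j(i)$ from Lemma \ref{lemma:bx1}, the sign information of Lemma \ref{lemma:bx2}, and a pointwise optimisation over $\lip(d_u)$. I would first use the invariance $g_{f+c} = g_f$ for every constant $c$ (which holds under the convention $g_f(0)=0$, since $g_0 = 0$) to reduce the problem to functions $h := f - f(i)$ vanishing at $i$. For $f \in \lip(d_u)$ one then has $|h(j)| \leq d_u(i,j)$ for every $j \in \N$. Expanding $h = \sum_j h(j)\1_j$ and exploiting linearity of the map $f \mapsto g_f$ yields
\begin{align*}
\p g_f(i) = \sum_{j \neq i} h(j)\, \p g_j(i),
\end{align*}
the $j = i$ term dropping out because $h(i) = 0$.

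Next, I would plug in the formula of Lemma \ref{lemma:bx1}: for $j \neq i$,
\begin{align*}
\p g_j(i) = \pi(j)(e_{i+1}^- - e_i^-)\, \1_{j \leq i-1} \;+\; \pi(j)(e_{i-1}^+ - e_i^+)\, \1_{j \geq i+1}.
\end{align*}
Since $V_1 \geq 0$, Lemma \ref{lemma:bx2} gives $e_{i+1}^- - e_i^- \leq 0$ and $e_{i-1}^+ - e_i^+ \leq 0$, so $\p g_j(i) \leq 0$ for every $j \neq i$. Consequently
\begin{align*}
|\p g_f(i)| \;\leq\; \sum_{j \neq i} |h(j)|\,|\p g_j(i)| \;\leq\; \sum_{j \neq i} d_u(i,j)\,|\p g_j(i)|,
\end{align*}
where the second inequality is saturated precisely when $h(j)$ has the same sign as $-\p g_j(i)$ (hence non-positive for all $j\neq i$) \emph{and} attains the Lipschitz bound, i.e.\ $h = \varphi_i$; the opposite sign is realised by $h = -\varphi_i$ and gives the same absolute value. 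A direct check shows $\p_u \varphi_i(j) = 1$ for $j < i$ and $\p_u \varphi_i(j) = -1$ for $j \geq i$, so $\varphi_i \in \lip(d_u)$, and the bound is attained at $f = \varphi_i$, yielding the claimed identity.

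The chief technical point is to justify the termwise manipulations of the series when $f$ is only Lipschitz and therefore possibly unbounded. One must verify the absolute convergence of $\sum_j \pi(j)\, d_u(i,j)\,\bigl(|e_{i+1}^- - e_i^-| + |e_{i-1}^+ - e_i^+|\bigr)$, which follows from the ergodicity hypotheses on the BDP ensuring $\pi$ has a finite $d_u$-moment and that the sequences $e^\pm$ are well defined. Alternatively, one can first establish the identity for bounded truncations $f_N := (f \wedge N) \vee (-N) \in \lip(d_u)$ and pass to the limit as $N \to \infty$, monotone convergence being applicable because every term in the expansion of $\p g_{f_N}(i) - \p g_{f}(i)$ has a definite sign by Lemma \ref{lemma:bx2}.
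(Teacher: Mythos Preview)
Your proposal is correct and follows essentially the same approach as the paper: reduce to $f(i)=0$ via $g_{f+c}=g_f$, expand $\p g_f(i)=\sum_j f(j)\,\p g_j(i)$, use the explicit formula of Lemma~\ref{lemma:bx1} together with the sign information of Lemma~\ref{lemma:bx2} to see that each $\p g_j(i)\le 0$ for $j\neq i$, and conclude that the bound $|f(j)|\le d_u(i,j)$ is saturated by $\varphi_i=-d_u(i,\cdot)$. Your treatment is in fact slightly more careful than the paper's, which does not explicitly address the absolute convergence of the series or verify that $\varphi_i\in\lip(d_u)$.
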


\begin{proof}
If $f$ and $\t{f}$ are two real-valued functions on $\N$, then $g_{f+\t{f}}=g_f+g_{\t{f}}$ and that if $f$ is constant, then $g_f=0$. As a consequence, by replacing $f$ by $-f$ and $f-f(i)$ if necessary,
$$\underset{f\in \lip(d_u)}{\sup}|\p g_f (i)|=\underset{\substack{f\in \lip(d_u),\\f(i)=0}}{\sup}\, \p g_f (i).$$
Recall that $g_j:=g_{\1_j}$ for $j\in \N$. For all real-valued function  $f$ on $\N$,
$$g_f=\sum_{j\in \N}f(j)g_j,\quad \quad \p g_f(i)=\sum_{j\in \N}f(j)\p g_j(i),\quad i \in \N.$$
By Lemmas \ref{lemma:bx1} and \ref{lemma:bx2}, if $f\in \lip(d_u)$ and $f(i)=0$ then
\begin{eqnarray*}
\p g_f(i)&=&\sum_{j=0}^\infty{f(j)\p g_j(i)}=(e_{i-1}^+-e_{i}^+)\sum_{j \leq i-1}{\pi_j f(j)}+(e_{i+1}^--e_{i}^-)\sum_{j \geq i+1}{\pi_j f(j)}\\
&\leq &|\p g_{\varphi_i}(i)| =(e_{i}^+-e_{i-1}^+)\sum_{j \leq i-1}{\pi_j d_u(i,j)}+(e_{i}^--e_{i+1}^-)\sum_{j \geq i+1}{\pi_j d_u(i,j)}.
\end{eqnarray*}
\end{proof}

\begin{lemma}[Pointwise second Stein's factor for Lipschitz functions]
\label{theorem_stein_factor_wasserstein}
\begin{itemize}
\item[]

\item Under $\mathbf{H_1}$, for all integer $i\in \N^*$,
\begin{equation}
\sup_{f \in \text{Lip}(d_u)}|\p_u g_f(i)|\leq \int_0^\infty e^{-\sigma(1,*u)t}\mathbb{E}[\psi_i(X_{1,*u,t}^i)]dt.
\label{equation_majoration_stein_factor}
\end{equation}
Moreover if $V_{1,*u}$ is constant then the preceding inequality is in fact an equality.

\item Under $\mathbf{H_2}$, for all integer $i\in \N$
\begin{equation}
\sup_{f \in \text{Lip}(d_u)}\left|\frac{1}{u(i)}\p g_f(i+1)\right|\leq \int_0^\infty e^{-\sigma(1,u)t} \mathbb{E}[\Psi_i(X_{1,u,t}^i)]dt.
\label{equation_majoration_stein_factor_bis}
\end{equation} 
Moreover if $V_{1,u}$ is constant then the preceding inequality is in fact an equality.
\end{itemize}
\end{lemma}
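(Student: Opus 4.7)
The plan is to reduce the supremum over $\lip(d_u)$ to a pointwise evaluation via Lemma \ref{lemma:argmax}, rewrite that evaluation using the semigroup representation of $g_f$ combined with the appropriate second-order intertwining, and conclude by a Feynman-Kac comparison. Both cases under $\mathbf{H_1}$ and $\mathbf{H_2}$ are handled by the same three-step strategy, differing only in which intertwining theorem is applied.

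Since $V_1\geq 0$ under both $\mathbf{H_1}$ and $\mathbf{H_2}$, Lemma \ref{lemma:argmax} identifies the maximizer of $|\p g_f(i)|$ (respectively $|\p g_f(i+1)|$) over $f\in\lip(d_u)$ as $\varphi_i=-d_u(i,\cdot)$ (respectively $\varphi_{i+1}$), and its proof shows that $\p g_{\varphi_i}(i)$ and $\p g_{\varphi_{i+1}}(i+1)$ are both non-negative, so the absolute values may be dropped. Under $\mathbf{H_1}$, I would use the identity $\p\p^*=\p^*\p$ on $\N^*$ together with \eqref{eq:sol_stein_1} to write
\begin{equation*}
\p_u g_{\varphi_i}(i)\;=\;\int_0^\infty \p_u^*\p P_t\varphi_i(i)\,dt,
\end{equation*}
and then apply Theorem \ref{theorem_intertwining_order2_hard} with the weights of that theorem specialized to $(1,u)$ to rewrite the integrand as $P_{1,*u,t}^{V_{1,*u}}(\p_u^*\p\varphi_i)(i)$. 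Under $\mathbf{H_2}$, the parallel step uses \eqref{eq:sol_stein_1bis} together with Theorem \ref{theorem_intertwining_order2_easy} (again with weights $(1,u)$) to obtain
\begin{equation*}
\frac{1}{u(i)}\p g_{\varphi_{i+1}}(i+1)\;=\;-\int_0^\infty P_{1,u,t}^{V_{1,u}}(\p_u\p\varphi_{i+1})(i)\,dt.
\end{equation*}

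The remaining step is a piecewise computation. From the explicit form $\p\varphi_i(j)=u(j)\1_{j<i}-u(j)\1_{j\geq i}$, a direct case analysis in the three regions $x\leq i-1$, $x=i$, $x\geq i+1$ identifies $\p_u^*\p\varphi_i$ with $\psi_i$ and $\p_u\p\varphi_{i+1}$ with $-\Psi_i$. Inserting these expressions into the Feynman-Kac representations of $P_{1,*u,t}^{V_{1,*u}}$ and $P_{1,u,t}^{V_{1,u}}$ and invoking the lower bounds $V_{1,*u}\geq\sigma(1,*u)$ and $V_{1,u}\geq\sigma(1,u)$ respectively deliver \eqref{equation_majoration_stein_factor} and \eqref{equation_majoration_stein_factor_bis}. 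When the relevant potential is constant, the Feynman-Kac exponential collapses to the deterministic factor $e^{-\sigma t}$ and each inequality becomes an equality. The main technical obstacle I expect is the piecewise sign bookkeeping under $\mathbf{H_1}$: the function $\p_u^*\p\varphi_i$ changes sign across $x=i$, so the identification with $\psi_i$ must be carried out carefully and matched against the non-negativity of $\p_u g_{\varphi_i}(i)$ established in the first step.
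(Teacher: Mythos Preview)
Your proposal is correct and follows essentially the same route as the paper: reduce to the extremal function $\varphi_i$ via Lemma \ref{lemma:argmax}, apply the semigroup representation \eqref{eq:sol_stein_1} (resp.\ \eqref{eq:sol_stein_1bis}), invoke the second-order intertwining Theorem \ref{theorem_intertwining_order2_hard} (resp.\ Theorem \ref{theorem_intertwining_order2_easy}) with weights $(1,u)$, identify $\p_u^*\p\varphi_i=\psi_i$ and $-\p_u\p\varphi_{i+1}=\Psi_i$, and bound the Feynman-Kac exponential by $e^{-\sigma t}$. The paper's proof is terser but contains exactly these steps in the same order.
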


\begin{proof}[Proof of Lemma \ref{theorem_stein_factor_wasserstein}]
Let us assume that $\mathbf{H_1}$ holds true. By the equation \eqref{eq:sol_stein_1}, Theorem \ref{theorem_intertwining_order2_hard} and Lemma \ref{lemma:argmax}, for every positive integer $i$,
\begin{eqnarray*}
\sup_{f\in \lip(d_u)}|\p_u g_f(i)|&=&\frac{1}{u(i)}\sup_{f\in \lip(d_u)}|\p g_f(i)|=\frac{1}{u(i)}\p g_{\varphi_i}(i)=\p_u g_{\varphi_i}(i)=\int_0^\infty{\p_u^* \p P_t \varphi_i dt}\\
&=&\int_0^\infty{ P_{1,*u,t}^{V_{1,*u}}(\p_u^* \p \varphi_i) dt} \leq \int_0^\infty{e^{-\sigma(1,*u)t} \mathbb{E}\left[\p_u^* \p \varphi_i(X_{1,*u,t}^i)\right] dt}.
\end{eqnarray*}
It is easy to check that $\psi_i=\p_u^* \p \varphi_i$, which proves \eqref{equation_majoration_stein_factor}. 

Now, if $\mathbf{H_2}$ holds true, by the equation \eqref{eq:sol_stein_1bis}, Theorem \ref{theorem_intertwining_order2_easy} and Lemma \ref{lemma:argmax}, for all integer $i$,
\begin{eqnarray*}
\sup_{f\in \lip(d_u)}\left|\frac{1}{u(i)}\p g_f(i+1)\right|&=&\frac{1}{u(i)}\sup_{f\in \lip(d_u)}|\p g_f(i+1)|=\frac{1}{u(i)}\p g_{\varphi_{i+1}}(i+1)\\
&=&-\int_0^\infty{\p_u \p P_t \varphi_{i+1}(i) dt}\\
&=&-\int_0^\infty{ P_{1,u,t}^{V_{1,u}} \p_u \p \varphi_{i+1}(i) dt}
\end{eqnarray*}
As $-\p_u \p \varphi_{i+1}=\Psi_i$, the equation \eqref{equation_majoration_stein_factor_bis} holds true.
\end{proof}

We deduce from Lemma \ref{theorem_stein_factor_wasserstein} both Theorem \ref{thm:stein_factor_wasserstein_order2} and Theorem \ref{thm:stein_factor_wasserstein_order2_bis}. We only give the proof of Theorem \ref{thm:stein_factor_wasserstein_order2} because Theorem \ref{thm:stein_factor_wasserstein_order2_bis} is similar.

\begin{proof}[Proof of Theorem \ref{thm:stein_factor_wasserstein_order2}]
First of all let us notice that for all function $f:\N\rightarrow\R$,
\begin{align}
\|\p_uf\|_\infty &\leq \sup_{x\in \N}\left(1+\frac{u(x+1)}{u(x)}\right)\|f/u\|_\infty, &\|\p_u^*f\|_{\infty,\N^*}&\leq \sup_{x\in \N^*}\left(1+\frac{u(x-1)}{u(x)}\right)\|f/u\|_\infty.
\label{eq:weighted_trivial_majo}
\end{align}
Under $\mathbf{H_1}$, as $\|\p_u \varphi_i\|_\infty \leq 1$, it implies that  
$$\sup_{x\in \N^*}|\p_u\p^* \varphi_i(x)|\leq \sup_{x \in \N^*}\left(1+\frac{u(x-1)}{u(x)}\right).$$
Plugging this in the equation (\ref{equation_majoration_stein_factor}) yields the first upper bound of the theorem. 

On the other hand, if $u(x)=q^x$ on $\N$ with $q\geq 1$, then by using that $\1_{[0,i]}=1-\1_i-\1_{[i+1,\infty)}$, we write 
\begin{eqnarray*}
\p_u^*\p \varphi_i(j)&= &\left(1-\frac{u(j-1)}{u(j)}\right)+2\frac{u(j-1)}{u(j)}\1_{j=i}+2\left(\frac{u(j-1)}{u(j)}-1\right)\1_{j\geq i+1}\\
&\leq & 1-\frac{1}{q}+2\frac{1}{q}\1_{j=i}
\end{eqnarray*}
which proves the second upper bound.
\end{proof}


\subsection{Approximation in Kolmogorov distance}

\begin{proof}[Proof of Theorem \ref{thm:stein_factor_kolmo_order1}]
As one can see in Lemma \ref{lemma:argmaxtv}, the function that realizes the maximum in the first Stein factor associated to bounded functions, $f=\1_{[0,i]}$, is also an element of the class of the half-line indicator functions. Hence without further analysis the analogous of Lemma \ref{thm:stein_factor_tv_0} and Theorem \ref{thm:stein_factor_tv_order1} hold by replacing $\mathcal{F}=\left\{0\leq f\leq 1\right\}$ by $\mathcal{F}=\left\{\1_{[0,m]},\,m\in \N\right\}$.
\end{proof}

For the second Stein factor, we begin by determining the argmax of the pointwise factor, as we did previously.

\begin{lemma}[Argmax of the pointwise Stein factor]
\label{lemma_argmax_kolmo}
For all $i\in \N$
$$\max\left\{-\p g_{\1_{[0,i-1]}}(i),  \p g_{\1_{[0,i]}}(i)  \right\}=\sup_{f=\1_{[0,m]},\,m\in \N}|\p g_f(i)|.$$
\end{lemma}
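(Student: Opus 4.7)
Fix $i \in \N^*$; the case $i = 0$ is immediate under the normalization $\partial g_f(0) = 0$ used in this section. By linearity, $\partial g_{\1_{[0,m]}}(i) = \sum_{j=0}^m \partial g_j(i)$, and formula \eqref{eq:sol_stein_1_bx} of Lemma \ref{lemma:bx1} combined with Lemma \ref{lemma:bx2} yields a very clean sign pattern for the increments: $\partial g_j(i) \le 0$ for every $j \ne i$, while $\partial g_i(i) = \pi(i)(e_{i+1}^- + e_{i-1}^+) \ge 0$. Writing $v_m := \partial g_{\1_{[0,m]}}(i)$, the sequence $(v_m)_{m \in \N}$ is therefore non-increasing on $\{0,\ldots,i-1\}$, jumps up at $m=i$ by the non-negative amount $\partial g_i(i)$, and is again non-increasing on $\{i,i+1,\ldots\}$.

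In particular, $v_m \le 0$ for every $m \le i-1$ and $|v_m|$ is maximized on that range at $m = i-1$, giving $\sup_{0 \le m \le i-1}|v_m| = -v_{i-1}$. To treat $m \ge i$, the key step is the identity $\lim_{m\to\infty} v_m = 0$, equivalently $\sum_{j=0}^\infty \partial g_j(i) = 0$. This is the one genuinely computational point of the proof: splitting the series as above, plugging in $\sum_{k=0}^{i-1}\pi(k) = \beta(i)\pi(i) e_{i-1}^+$ and $\sum_{k=i+1}^\infty \pi(k) = \alpha(i)\pi(i) e_{i+1}^-$ (both of which follow from the definitions of $e_k^\pm$ and detailed balance $\alpha(k)\pi(k) = \beta(k+1)\pi(k+1)$), the cross terms $e_{i-1}^+ e_{i+1}^-$ cancel pairwise and one is left with $0$.

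Combining this limit with monotonicity on $\{i,i+1,\ldots\}$ forces $0 \le v_m \le v_i$ for all $m \ge i$, so $\sup_{m \ge i}|v_m| = v_i$. Putting the two regimes together yields
\begin{align*}
\sup_{m \in \N}|v_m| &= \max\bigl\{-\partial g_{\1_{[0,i-1]}}(i),\, \partial g_{\1_{[0,i]}}(i)\bigr\},
\end{align*}
which is the claim. The only real obstacle in the plan is the telescoping identity $\sum_j \partial g_j(i) = 0$; everything else is a transparent monotonicity argument built directly on Lemmas \ref{lemma:bx1} and \ref{lemma:bx2}.
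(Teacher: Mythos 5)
Your proof is correct, and it follows essentially the same path as the paper's: split into the regimes $m\le i-1$ and $m\ge i$, use the explicit sign structure of $\p g_j(i)$ coming from Lemmas~\ref{lemma:bx1} and~\ref{lemma:bx2}, and exploit monotonicity of the resulting partial sums. The only real difference is how the regime $m\ge i$ is handled. You establish the telescoping identity $\sum_{j\ge 0}\p g_j(i)=0$ by substituting the formulas for $e_{i-1}^+$, $e_{i+1}^-$ and the partial masses of $\pi$, then using detailed balance; this works, but it is a heavier computation than necessary. The paper gets the same conclusion for free by observing that $g_{1-f}=-g_f$ (constants are annihilated by the Stein operator under the chosen normalization, so $g_{\1_\N}=0$, which is precisely your identity), whence $\p g_{\1_{[0,m]}}(i)=-\p g_{\1_{[m+1,\infty)}}(i)=(e_i^+-e_{i-1}^+)\sum_{j>m}\pi(j)$, a manifestly non-negative and non-increasing tail sum. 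You may wish to replace the brute-force verification with this one-line argument; otherwise the structure of your proof, including the observation that $\p g_j(i)\le0$ for $j\ne i$ and $\p g_i(i)\ge0$, matches the paper's reasoning exactly.
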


\begin{proof}
Let $f=\1_{[0,m]}$ for an integer $m$. By Lemma \ref{lemma:bx1}, if $m\leq i-1$,
$$\p g_g(i)=\sum_{j=0}^m \pi(j)(e_{i+1}^- -e_i^-).$$
Hence by Lemma \ref{lemma:bx2},
$$|\p g_f(i)|=-\p g_g(i)=(e_{i}^- -e_{i+1}^-)\sum_{j=0}^m \pi(j),$$
so the maximum when $m$ browses the interval $[0,i-1]$ is attained in $m=i-1$. 

Now, if $m \geq i$, let us call $F=1-f=\1_{[m+1,\infty)}$. By the same lemmas,
\begin{align*}
|g_f(i)|&=|g_F(i)|=|e_i^+-e_{i-1}^+|\sum_{j=m+1}^\infty{\pi(j)}=(e_i^+-e_{i-1}^+)\sum_{j=m+1}^\infty{\pi(j)}=g_f(i),
\end{align*}
so the maximum when $m$ browses the interval $[i,+\infty)$ is attained in $m=i$.
\end{proof}

\begin{lemma}[Second pointwise Stein's factor for indicator functions]
\label{thm:majo_stein_order1_kolmo}
\begin{itemize}
\item[]

\item Under $\mathbf{H_1}$, for all integer $i\in \N^*$, the quantity $\sup_{f=\1_{[0,m]},\,m\in \N} \p g_f(i)$ is bounded by the maximum of
\begin{eqnarray*}
\int_0^\infty{e^{-\sigma(1,*u)t}\left(\P(X_{1,*u,t}^i=i)-\frac{u(i)}{u(i-1)}\P(X_{1,*u,t}^i=i-1)\right)dt}
\end{eqnarray*}
and 
\begin{align*}
\int_0^\infty{e^{-\sigma(1,*u)t}\left(\P(X_{1,*u,t}^i=i)-\frac{u(i)}{u(i+1)}\P(X_{1,*u,t}^i=i+1)\right)dt}.
\end{align*}

\item Under $\mathbf{H_2}$, for all integer $i\in \N$, the quantity $\sup_{f=\1_{[0,m]},\,m\in \N} \p g_f(i+1)$ is bounded by the maximum of
\begin{align*}
\int_0^\infty{e^{-\sigma(1,u)t}\left(\P(X_{1,u,t}^i=i)-\frac{u(i)}{u(i-1)}\P(X_{1,u,t}^i=i-1)\right)dt}
\end{align*}
and 
\begin{align*}
\int_0^\infty{e^{-\sigma(1,u)t}\left(\P(X_{1,u,t}^i=i)-\frac{u(i)}{u(i+1)}\P(X_{1,u,t}^i=i+1)\right)dt}.
\end{align*}
\end{itemize}
Moreover, if the potential $V_{1,*u}$ (respectively $V_{1,u}$) is constant, then the first (respectively the second) upper bound is in fact an equality.
\end{lemma}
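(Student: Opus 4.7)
The plan is to adapt the three-step template used for the total-variation and Wasserstein cases (Lemmas \ref{lem:stein_factor_tv} and \ref{theorem_stein_factor_wasserstein}) to the Kolmogorov class $\mathcal{F}_K = \{\1_{[0,m]},\, m\in\N\}$. By Lemma \ref{lemma_argmax_kolmo}, the pointwise Stein factor at $i\in \N^*$ (resp.\ at $i+1$ for $i\in\N$) equals the maximum of the two specific non-negative quantities $-\p g_{\1_{[0,i-1]}}(i)$ and $\p g_{\1_{[0,i]}}(i)$ (resp.\ their shifted analogues), whose non-negativity follows from Lemmas \ref{lemma:bx1}--\ref{lemma:bx2}; it therefore suffices to bound each of these two candidates separately.

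For the bound under $\mathbf{H_1}$, I would use the representation \eqref{eq:sol_stein_1} combined with the identity $\p^*\p = \p\p^*$ on $\N^*$ to write
\[
\p g_f(i) \;=\; u(i)\int_0^\infty \p_u^*\p P_t f(i)\, dt, \qquad i \in \N^*,
\]
and then apply Theorem \ref{theorem_intertwining_order2_hard} (with outer weight $1$ and inner weight $u$) to obtain $\p_u^*\p P_t f = P^{V_{1,*u}}_{1,*u,t}(\p_u^*\p f)$. Under $\mathbf{H_2}$, the starting point is instead \eqref{eq:sol_stein_1bis}, which gives $\p g_f(i+1) = -u(i)\int_0^\infty \p_u \p P_t f(i)\, dt$, and Theorem \ref{theorem_intertwining_order2_easy} rewrites the integrand as $P^{V_{1,u}}_{1,u,t}(\p_u\p f)$.

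The core computation is the explicit form of $\p_u^*\p$ (resp.\ $\p_u\p$) evaluated on the two candidate indicators. Using $\p\1_{[0,i]} = -\1_i$ and $\p^*\1_i = \1_{i+1}-\1_i$ for $i\geq 1$, one immediately obtains
\[
\p_u^*\p \1_{[0,i]} \;=\; \tfrac{1}{u(i)}\1_i - \tfrac{1}{u(i+1)}\1_{i+1}, \qquad \p_u^*\p \1_{[0,i-1]} \;=\; \tfrac{1}{u(i-1)}\1_{i-1} - \tfrac{1}{u(i)}\1_i,
\]
and analogous two-point expressions for $\p_u\p$ under $\mathbf{H_2}$. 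These two-point supports (in contrast to the three-point support of $\p_u^*\p\1_i$ appearing in the TV case of Lemma \ref{lem:stein_factor_tv}) are precisely what produces the first-difference of hitting probabilities in the final statement. Plugging them back in, multiplying by $u(i)$, and bounding the Feynman-Kac weight by $e^{-\sigma(1,*u)t}$ (resp.\ $e^{-\sigma(1,u)t}$) yields the two candidates appearing in the statement; their maximum is the desired upper bound. The equality case when the underlying potential is constant is immediate since the Feynman-Kac weight then reduces exactly to $e^{-\sigma t}$ and all the inequalities above become identities.

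The delicate step that I expect to be the main obstacle is the comparison of $P^{V_{1,*u}}_{1,*u,t}(h)(i)$ with $e^{-\sigma(1,*u)t}\,\E[h(X^i_{1,*u,t})]$ when $h = \p_u^*\p\1_{[0,i]}$ has mixed signs (positive at $i$, negative at $i+1$). This is exactly the same technical point that arises in the proofs of Lemmas \ref{lem:stein_factor_tv} and \ref{theorem_stein_factor_wasserstein}, and I would handle it in the same manner: by separating the contributions of the positive and negative parts of $h$ in the Feynman-Kac integrand and exploiting the a priori non-negativity of the two candidate factors established in Lemmas \ref{lemma:bx1}--\ref{lemma:bx2}.
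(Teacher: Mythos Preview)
Your proposal is correct and follows essentially the same route as the paper: compute $\p_u^*\p \1_{[0,m]}=\tfrac{1}{u(m)}\1_m-\tfrac{1}{u(m+1)}\1_{m+1}$, plug the two argmax candidates from Lemma~\ref{lemma_argmax_kolmo} into \eqref{eq:sol_stein_1} (resp.\ \eqref{eq:sol_stein_1bis}), apply Theorem~\ref{theorem_intertwining_order2_hard} (resp.\ \ref{theorem_intertwining_order2_easy}), and bound the Feynman--Kac weight by $e^{-\sigma t}$. The mixed-sign comparison you flag as delicate is written by the paper as a direct inequality, exactly as in the proofs of Lemmas~\ref{lem:stein_factor_tv} and \ref{theorem_stein_factor_wasserstein}, so no additional device is introduced there.
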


\begin{proof}
If $f=\1_{[0,m]}$ then $\p_u^*\p f_m=\frac{1}{u(m)}\1_m-\frac{1}{u(m+1)}\1_{m+1}$. Under $\mathbf{H_1}$, by equation \eqref{eq:sol_stein_1} and Theorem \ref{theorem_intertwining_order2_hard},
\begin{align*}
-\p g_{\1_{[0,i-1]}}(i)&=u(i)\int_0^\infty{P_{1,*u,t}^{V_{1,*u}}(-\frac{1}{u(i-1)}\1_{i-1}+\frac{1}{u(i)}\1_{i})dt}\\
&\leq \int_0^\infty{e^{-\sigma(1,*u)t}\left(\P(X_{1,*u,t}^i=i)-\frac{u(i)}{u(i-1)}\P(X_{1,*u,t}^i=i-1)\right)dt}.
\end{align*}
Similarly, 
\begin{align*}
\p g_{\1_{[0,i]}}(i)&\leq \int_0^\infty{e^{-\sigma(1,*u)t}\left(\P(X_{1,*u,t}^i=i)-\frac{u(i)}{u(i+1)}\P(X_{1,*u,t}^i=i+1)\right)dt}.
\end{align*}
We get the conclusion by Lemma \ref{lemma_argmax_kolmo}. The proof is analogous under $\mathbf{H_2}$, using this time equation \eqref{eq:sol_stein_1bis} and Theorem \ref{theorem_intertwining_order2_easy}.
\end{proof}

Finally, Theorem \ref{thm:stein_factor_kolmo_order2} and \ref{thm:stein_factor_kolmo_order2_bis} are simple consequences of the previous lemma.

\section{Proof of Section \ref{sect:exe}}

\label{sect:proof_expl}

\label{sect:proof_expl_mminfty}
The second upper bound of Lemma \ref{lem:majo_ptw_mminfty} derives by classical arguments from Mehler's formula \eqref{eq:mehler-poisson} and the following lemma.
\begin{lemma}[Upper bound on differences of the pointwise probabilities of the Poisson distribution]
\label{lem:majo_ptw_poisson}
\begin{align}
\sup_{x \in \N}\,\left|\mathcal{P}_\lambda(x)-\mathcal{P}_\lambda(x-1)\right| & \leq 1 \wedge \frac{C}{\l}, &C&:=\frac{1}{\sqrt{2\pi}}e^{\frac{1}{\sqrt{2}}} \leq 1.\label{eq:majo_poisson_2}
\end{align}
\end{lemma}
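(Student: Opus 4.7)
The plan is to reduce, via the Poisson recurrence, to a pointwise bound on $|x-\lambda|\mathcal{P}_\lambda(x)$, and then to control that quantity by Stirling's inequality combined with elementary convexity.

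Since $x\mathcal{P}_\lambda(x)=\lambda\mathcal{P}_\lambda(x-1)$ for $x\geq 1$, one has $\mathcal{P}_\lambda(x)-\mathcal{P}_\lambda(x-1)=\frac{\lambda-x}{\lambda}\mathcal{P}_\lambda(x)$, hence $\lambda\,|\mathcal{P}_\lambda(x)-\mathcal{P}_\lambda(x-1)|=|x-\lambda|\mathcal{P}_\lambda(x)$ for $x\geq 1$, while for $x=0$ the quantity reads $\lambda\,e^{-\lambda}\leq 1/e<C$. The factor $1$ in $1\wedge C/\lambda$ is trivial since each probability lies in $[0,1]$. It thus suffices to show $|x-\lambda|\mathcal{P}_\lambda(x)\leq C$ for all $x\in\N^*$ and $\lambda>0$.

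Next, Stirling's lower bound $x!\geq\sqrt{2\pi x}\,(x/e)^x$ gives
\begin{equation*}
|x-\lambda|\mathcal{P}_\lambda(x)\ \leq\ \frac{|x-\lambda|}{\sqrt{2\pi x}}\,e^{-\lambda\psi(x/\lambda)},\qquad \psi(t):=t\ln t-t+1\geq 0.
\end{equation*}
Since $\psi''(t)=1/t$, a Taylor remainder yields $\psi(t)\geq(t-1)^2/(2t)$ for $t\geq 1$ and $\psi(t)\geq(1-t)^2/2$ for $0<t\leq 1$. I then split into two cases. When $x\geq\lambda$, the substitution $u=(x-\lambda)/\sqrt{x}$ reduces the upper bound to $u\,e^{-u^2/2}/\sqrt{2\pi}$, maximised at $u=1$ with value $1/\sqrt{2\pi e}<C$. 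When $1\leq x\leq\lambda$, setting $u=(\lambda-x)/\sqrt{\lambda}$ and noting the admissibility constraint $u\leq\sqrt{\lambda}-1/\sqrt{\lambda}$ (equivalent to $x\geq 1$) turns the bound into
\begin{equation*}
|x-\lambda|\mathcal{P}_\lambda(x)\ \leq\ \frac{u}{\sqrt{2\pi(1-u/\sqrt{\lambda})}}\,e^{-u^2/2},
\end{equation*}
which is decreasing in $\lambda$ at fixed $u$. The worst case is therefore $x=1$, at which $\sqrt{\lambda^*}=(u+\sqrt{u^2+4})/2$, and the task reduces to the elementary inequality
\begin{equation*}
\frac{u(u+\sqrt{u^2+4})}{2}\,e^{-u^2/2}\ \leq\ e^{1/\sqrt{2}},\qquad u\geq 0.
\end{equation*}

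The main obstacle is this last estimate. I plan to use the crude bound $\sqrt{u^2+4}\leq u+2/u$ for $u>0$ (verified by squaring), which gives $u(u+\sqrt{u^2+4})/2\leq u^2+1$, and then maximise $(u^2+1)\,e^{-u^2/2}$: its derivative vanishes at $u=1$ where the peak value is $2/\sqrt{e}\approx 1.21$. Since $2/\sqrt{e}<e^{1/\sqrt{2}}\approx 2.03$, the desired inequality follows. The stated constant $C=e^{1/\sqrt{2}}/\sqrt{2\pi}$ is not sharp (one could achieve $\sqrt{2/(\pi e)}$ by this very argument) but has the advantage of a compact closed form.
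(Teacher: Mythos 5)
Your proof is correct, and it reaches the same constant $C=e^{1/\sqrt{2}}/\sqrt{2\pi}$ by a route that differs in a useful way from the paper's. Both proofs open with the Poisson recurrence $\mathcal{P}_\lambda(x)-\mathcal{P}_\lambda(x-1)=\frac{\lambda-x}{\lambda}\mathcal{P}_\lambda(x)$ and a Stirling lower bound on $x!$, but the subsequent optimisation is organised quite differently. The paper takes logarithms, defines $f(\lambda,x)=x-\lambda+\log|x-\lambda|-\frac12\log x+x\log(\lambda/x)$, computes $\partial_\lambda f$, locates the critical values $\lambda_{1}(x),\lambda_{2}(x)$ (solutions of $x=\lambda\pm\sqrt\lambda$), and then estimates $f$ along the critical curves via $\log(1\pm u)$ expansions; the regime $\lambda\geq x$ forces a further split into $x\in\{1,2,3\}$ (handled by a direct computation) and $x\geq4$ (where the expansion of $\log(1-u)$ is valid). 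You instead bound the Cram\'er function $\psi(t)=t\log t-t+1$ from below by a quadratic via the Taylor remainder and $\psi''(t)=1/t$, which converts the Stirling bound into a Gaussian-type tail $e^{-u^2/2}$ after the substitution $u=(x-\lambda)/\sqrt{x}$ (resp.\ $u=(\lambda-x)/\sqrt{\lambda}$); the maximisation over $u\geq0$ is then a one-variable calculus exercise, and the boundary constraint $x\geq1$ is absorbed cleanly by monotonicity in $\lambda$. This eliminates the paper's small-$x$ case distinction, and in fact yields the slightly sharper constant $\sqrt{2/(\pi e)}\approx 0.484$ essentially for free, as you point out. One minor remark on exposition: when you argue that the bound is ``decreasing in $\lambda$ at fixed $u$, so the worst case is $x=1$,'' it is worth saying explicitly that you are relaxing $x$ to a real variable $\geq 1$ (which only increases the supremum) before reparametrising; as written the logic is sound, but a reader may stumble on how an integer constraint became the equality $x=1$.
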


\begin{proof}[Proof of Lemma \ref{lem:majo_ptw_poisson}]
Set 
\begin{align*}
q(\l,x)&= \lambda|\mathcal{P}_\lambda(x)-\mathcal{P}_\lambda(x-1)|, &x\in\N,\quad\l>0.
\end{align*}
Let us show that
$$\sup_{ x\in \N,\, \lambda>0}{q(\lambda,x)}<+\infty.$$
Firstly, 
\begin{align*}
q(\l,x)&= \frac{\l^xe^{-\l}}{x!}|\l-x|, &x\in\N,\quad\l>0.
\end{align*}
We first deal with the case where $x\in \N^*$. By a formula of Robbins (\cite{robbins1955stirling}), we know that for all $x \in \N^*$,
\begin{align*}
x!>\sqrt{2\pi x}\, x^x \, e^{-x+\frac{1}{12x}} \geq \sqrt{2\pi}\,e^{\frac{1}{2}\log x+x\log x-x}.
\end{align*}
Hence, $q(\l,x) \leq \frac{1}{\sqrt{2\pi}}e^{f(\l,x)}$ with 
\begin{align*}
f(\l,x)&=x-\l+\log|x-\l|-\frac{1}{2}\log x +x \log \frac{\l}{x},\\
\p_{\l} f(\l,x)&=-1+\frac{x}{\l}+\frac{1}{\l-x}.
\end{align*}
In the sequel we derive upper bounds of $f(\l,x)$ on relevant subsets of $(0,\infty)\times [1,\infty)$. One finds that 
\begin{align*}
\p_{\l}f(\l,x)=0&\Leftrightarrow (\l-x)(x-\l+\sqrt{\l})(x-\l-\sqrt{\l})=0.
\end{align*}
Let us call $\l_1(x)$ the solution of the equation $x=\l+\sqrt{\l}$ and $\l_2(x)$ the solution of the equation $x=\l-\sqrt{\l}$. We have $0<\l_1(x)<x<\l_2(x)$. 

If $\l\leq x$, then at $x$ fixed the function $f(\l,x)$ is increasing on $(0,\l_1(x)]$ and decreasing on $[\l_1(x),x]$. Hence,
\begin{align*}
\sup_{x \geq 1,\, 0<\l \leq x} f(\l,x) &=\sup_{x \geq 1} f(\l_1(x),x)=\sup_{\l >0}f(\l, \l+\sqrt{\l}).
\end{align*}
Moreover, using that $\forall u \geq 0, \,\log (1+u) \geq u -u^2/2$, we find that
\begin{align}
f(\l, \l+\sqrt{\l})&=\sqrt{\l}+\frac{1}{2}\log\frac{\l}{\l+\sqrt{\l}}+(\l+\sqrt{\l})\log\frac{\l}{\l+\sqrt{\l}}\nonumber\\
&=\sqrt{\l}-(\l+\sqrt{\l}+\frac{1}{2})\log \left(1+\frac{1}{\sqrt{\l}}\right) \label{eq:majo_stirling}\\
&\leq \sqrt{\l}-(\l+\sqrt{\l}+\frac{1}{2})\left(\frac{1}{\sqrt{\l}}-\frac{1}{2\l}\right) \nonumber\\
&=-\frac{1}{2}\left(1-\frac{1}{2\l}\right).\nonumber
\end{align}
Hence, if $\l \geq \frac{1}{2}$ then $f(\l, \l+\sqrt{\l})\leq 0$. If $\l \leq \frac{1}{2}$, by going back up to the equation \eqref{eq:majo_stirling}, $$f(\l, \l+\sqrt{\l})\leq \sqrt{\l}\leq \frac{1}{\sqrt{2}}.$$ At the end,
\begin{align*}
\sup_{x \in \N^*,\, 0<\l \leq x} f(\l,x) &\leq \frac{1}{\sqrt{2}}.
\end{align*}
Let us call $C_1:=\frac{1}{\sqrt{2\pi}}e^{\frac{1}{\sqrt{2}}} \sim 0,8$.

Now let us deal with the case where $\l \geq x$. We apply a different strategy for small integers $x$ as for large integers $x$. First of all, for all $x\in \N^*$ and for all $\l \geq x$,
\begin{align*}
q(\l,x) &= \frac{1}{x!}e^{-\l}\l^x(\l-x)\leq \frac{1}{x!}e^{-\l}\l^{x+1}
\end{align*}
and it is easy to see that at $x$ fixed the maximum of the right-hand expression is attained at $\l=x+1$. Hence
\begin{align*}
q(\l,x) &\leq \frac{1}{x!}e^{-(x+1)}(x+1)^{x+1}.
\end{align*}
Hence
\begin{align*}
\sup_{x \in \left\{1,2,3\right\},\, \l \geq x} q(\l,x)&\leq C_2:= \max_{x \in \left\{1,2,3\right\}}\;{ \frac{1}{x!}e^{-(x+1)}(x+1)^{x+1}} \sim 0.7.
\end{align*}
On the other hand, by the same reasoning as below, we find that
\begin{align*}
\sup_{x \in [4,+\infty),\, \l \geq x} f(\l,x) &=\sup_{x \in [4,+\infty)} f(\l_2(x),x)=\sup_{\l \geq 4}\, f(\l, \l-\sqrt{\l}).
\end{align*}
Now, for all $\lambda >1$,
\begin{align*}
f(\l,\l-\sqrt{\l})&=-\sqrt{\l}-\left(\frac{1}{2}+\l-\sqrt{\l}\right)\log\left(1-\frac{1}{\sqrt{\l}}\right).
\end{align*}
We use that $\forall u \in [0, \frac{1}{2}], -\log(1-u)\leq u +u^2$. As $\l \geq 4$ implies $\frac{1}{\sqrt{\l}}\leq \frac{1}{2}$, 	
\begin{align*}
f(\l, \l-\sqrt{\l})&\leq -\sqrt{\l}+\left(\frac{1}{2}+\l-\sqrt{\l}\right)\left(\frac{1}{\sqrt{\l}}+\frac{1}{\l}\right)\\
&=-\frac{1}{2\sqrt{\l}}\left(1-\frac{1}{\sqrt{\l}}\right)\\
&\leq 0.
\end{align*}

At the end,
\begin{align*}
\sup_{\l \geq x \geq 4} f(\l,x) &\leq C_3:=\frac{1}{\sqrt{2\pi}} \sim 0.4.
\end{align*}

It remains the case where $x=0$, for which it is trivial to see that
\begin{align*}
q(\l,0)&=\l e^{-\l}\leq C_4=e^{-1},&\l >0.
\end{align*}

The final result follows with $C=\max\left\{C_1,C_2,C_3,C_4\right\}=C_1$.
\end{proof}

\bibliographystyle{plainnat}
\bibliography{CloezDelplancke2017Biblio}
\end{document}